\documentclass[10pt]{article}

\usepackage{tikz}
\usepackage{graphicx}  
\usepackage{float}
\usepackage{amsmath,amssymb,amsfonts, theorem,makeidx,latexsym,epsfig, subfigure}
\allowdisplaybreaks
\usepackage{color}

\usetikzlibrary{positioning, arrows.meta, shapes}
\usepackage{xcolor}
\definecolor{Darkbrown}{RGB}{101, 67, 33}
\definecolor{verydarkbrown}{RGB}{60, 30, 10}
\definecolor{darkpurple}{RGB}{75,0,130}
\definecolor{slategray}{RGB}{70,90,110}
\definecolor{darkmagenta}{RGB}{139,0,139}
\definecolor{tealish}{RGB}{0,128,128}

\newtheorem{defn}{Definition}[section]

{\theorembodyfont{\rmfamily}

	\newtheorem{ex}[defn]{Example}}

\newtheorem{thm}[defn]{Theorem}

\newtheorem{prop}[defn]{Proposition}

\newtheorem{cor}[defn]{Corollary}

\newtheorem{lem}[defn]{Lemma}

\newtheorem{rem}[defn]{Remark}

\numberwithin{equation}{section}

\def\bp{{\noindent\bf Proof. \ }}

\def\ep{\hfill$\square$\par\bigskip}

\def\bee{\begin{eqnarray}}

	\def\ene{\end{eqnarray}}

\def\bes{\begin{eqnarray*}}

	\def\ens{\end{eqnarray*}}

\def\bei{\begin{itemize}}

	\def\eni{\end{itemize}}

\def\bt{\begin{thm}}

	\def\et{\end{thm}}

\def\bc{\begin{cor}}

	\def\ec{\end{cor}}

\def\bpr{\begin{prop}}

	\def\epr{\end{prop}}

\def\bl{\begin{lem}}

	\def\el{\end{lem}}

\def\bd{\begin{defn}}

	\def\ed{\end{defn}}

\def\bex{\begin{ex}}

	\def\enx{\end{ex}}

\def\bfi{\begin{fig}}

	\def\efi{\end{fig}}

\def\br{\begin{rem}}

	\def\er{\end{rem}}

\usepackage[
    colorlinks=false,      
    linkbordercolor={1 0 0},  
    citebordercolor={0 1 0},  
    urlbordercolor={0 0 1}    
]{hyperref}

	\newcommand{\norm}[1]{\left\Vert#1\right\Vert}

	\newcommand{\R}{\mathbb R}

	\newcommand{\Z}{\mathbb Z}
	
	\newcommand{\ltg}{L^2(G)}

	\newcommand{\hg}{\widehat{G}}
    
    \newcommand{\J}{\mathcal{J}}

    \def\al{\alpha}

\def\la{\lambda}

\def\e{\eta}

\def\z{\zeta}

\newcommand{\Th}{\Theta}

\newcommand{\pj}{P_j}

\newcommand{\spj}{\sum_{P_j}}
\newcommand{\mupj}{\mu_{P_j}}

\newcommand{\newgtione}{\displaystyle\bigcup_{j \in \mathcal{J}}\{T_{\la}g_p^{(1)}\}_{\la \in \eta \Gamma_{j}, p \in P_j}}

\newcommand{\newgtitwo}{\displaystyle\bigcup_{j \in \mathcal{J}}\{T_{\la}g_p^{(2)}\}_{\la \in \zeta\Gamma_{j}, p \in P_j}}

\newcommand{\wj}{w_{f;g_p^{(1)}, g_p^{(2)},j}}
\newcommand{\newwj}{w_{f;\e g_p^{(1)}, \z g_p^{(2)},j}}
\newcommand{\w}{w_{f;g_p^{(1)}, g_p^{(2)}}}
\newcommand{\neww}{w_{f;\e g_p^{(1)},  \z g_p^{(2)}}}
\newcommand{\wphij}{w_{f;\Phi_{j}}}

\newcommand{\wphiJ}{w_{f;\Phi_{J+1}}}

\newcommand{\wphiJminusone}{w_{f;\Phi_{J}}}

\newcommand{\wphijnote}{w_{f;\Phi_{j_0}}}

\newcommand{\wphijnoteplusone}{w_{f;\Phi_{j_0+1}}}

\newcommand{\wphijnoteplustwo}{w_{f;\Phi_{j_0+2}}}

\newcommand{\wphiJoneplusone}{w_{f;\Phi_{J_1+1}}}

\newcommand{\wphiJtwo}{w_{f;\Phi_{J_2}}}

\newcommand{\gpi}{g_p^{(i)}}

\newcommand{\gpone}{g_p^{(1)}}

\newcommand{\gptwo}{g_p^{(2)}}

\newcommand{\wgponej}{w_{f;\gpone,j}}

\newcommand{\wgpij}{w_{f;\gpi,j}}

\newcommand{\wgponejnote}{w_{f;\gpi,j_0}}

\newcommand{\wgpi}{w_{f;\gpi}}

\newcommand{\wgpone}{w_{f;\gpone}}

\newcommand{\dwphij}{\displaystyle\int\limits_{\Gamma_{j}} \left| \langle T_x f, T_\la \mathcal{F}^{-1}\Phi_{j} \rangle \right|^2 \ d\mu_{\Gamma_{j}}(\la) }

\newcommand{\dwgponejnote}{\int\limits_{P_{j_0}}\int\limits_{\Gamma_{j_0}} \left| \langle T_x f, T_\la \gpi \rangle \right|^2 \ d\mu_{\Gamma_{j_0}}(\la) d\mu_{P_{j_0}}(p)}

\newcommand{\M}{\mathcal{M}}
\newcommand{\Mg}{\M_{\la}}

\newcommand{\F}{\mathcal{F}}

\newcommand{\Fdwgponejnote}{\int\limits_{P_{j_0}}\int\limits_{\Gamma_{j_0}} \left| \langle \F(T_x f), \F(T_\la \gpi) \rangle \right|^2 \ d\mu_{\Gamma_{j_0}}(\la) d\mu_{P_{j_0}}(p)}

\newcommand{\FEdwgponejnote}{\int\limits_{P_{j_0}}\int\limits_{\Gamma_{j_0}} \left| \langle \F(T_x f), \Mg \widehat{\gpi} \rangle \right|^2 \ d\mu_{\Gamma_{j_0}}(\la) d\mu_{P_{j_0}}(p)}

\newcommand{\ta}{t_\alpha}

\newcommand{\dtna}{\displaystyle\sum_{j \in \mathcal{J} : \alpha \in \Gamma^\perp_j} \frac{1}{s(\Gamma_{j})} \int\limits_{P_j}  \overline{\widehat{g_{p}^{(1)}}(\gamma)} \widehat{g_{p}^{(2)}}(\gamma+\alpha)  d\mu_{P_j}(p)}

\newcommand{\dtza}{\displaystyle\sum_{j \in \mathcal{J} } \frac{1}{s(\Gamma_{j})} \int\limits_{P_j}  \overline{\widehat{g_{p}^{(1)}}(\gamma)} \widehat{g_{p}^{(2)}}(\gamma)  d\mu_{P_j}(p)}

\newcommand{\Gj}{\Gamma_{j}}

\newcommand{\Gjp}{\Gamma_{j}^\perp}

\newcommand{\dnewwj}{\int\limits_{P_j}\int\limits_{\Gamma_{j}} \langle T_x f, T_{\e\la} g^{(1)}_p \rangle \langle  T_{\z\la} g^{(2)}_p, T_x f \rangle \ d\mu_{\Gamma_{j}}(\la) d\mu_{P_j}(p)}

\newcommand{\dnewtna}{\displaystyle\sum_{j \in \mathcal{J} : \alpha \in \Gamma^\perp_j} \frac{1}{s(\Gamma_{j})} \int\limits_{P_j}  \overline{\widehat{g_{p}^{(1)}}(\eta^\ast\gamma)} \widehat{g_{p}^{(2)}}(\zeta^\ast(\gamma+\alpha))  d\mu_{P_j}(p)}

\newcommand{\dnewtza}{\displaystyle\sum_{j \in \mathcal{J} } \frac{1}{s(\Gamma_{j})} \int\limits_{P_j}  \overline{\widehat{g_{p}^{(1)}}(\eta^\ast\gamma)} \widehat{g_{p}^{(2)}}(\zeta^\ast\gamma)  d\mu_{P_j}(p)}

\newcommand{\tpj}{\widetilde{\Phi_{j}}}

\newcommand{\tpjplusone}{\widetilde{\Phi_{j+1}}}

\newcommand{\tgmij}{\widetilde{G^{(i)(m)}_j}}

\newcommand{\tgmijplusone}{\widetilde{G^{(i)(m)}_{j+1}}}

\newcommand{\thj}{\widetilde{H_j}}	

\newcommand{\thjplusone}{\widetilde{H_{j+1}}}

\newcommand{\tsij}{\widetilde{\Psi^{(i)(m)}_j}}

\newcommand{\gtione}{\displaystyle\bigcup_{j \in \mathcal{J}}\{T_{\lambda}g_p^{(1)}\}_{\la \in \Gamma_{j}, p \in P_j}}

\newcommand{\gtitwo}{\displaystyle\bigcup_{j \in \mathcal{J}}\{T_{\lambda}g_p^{(2)}\}_{\la \in \Gamma_{j}, p \in P_j}}

\newcommand{\g}{\gamma}

\newcommand{\G}{\Gamma}

\newcommand{\union}{\bigcup\limits_{j \in \mathcal{J}} \Gamma_{j}^\perp}

\newcommand{\umzero}{\bigcup\limits_{j \in \mathcal{J}} \Gamma_j^\perp\setminus \{0\}}

\newcommand{\gtii}{\displaystyle\bigcup_{j \in \mathcal{J}}\{T_{\lambda}g_p^{(i)}\}_{\gamma \in \Gamma_{j}, p \in P_j}}

\newcommand{\gtiiz}{\displaystyle\bigcup_{j \in \Z}\{T_{\lambda}g_p^{(i)}\}_{\la \in \Gamma_{j}, p \in P_j}}

\def\dx{\{\eta_j \la :   \la \in \Gamma_j\}_{j \in \J}}

\def\dy{\{\zeta_j \la :   \la \in \Gamma_j\}_{j \in \J}}

\def\tx{\mathcal{T}_{\mathcal{X}}}
\def\x{\mathcal{X}}
\def\y{\mathcal{Y}}
\newcommand{\ve}{V_{E}}

\def\ty{\mathcal{T}_{\mathcal{Y}}}

\def\ops{\bigoplus\limits_{j\in \J} L^2(\Gamma_j)}

\def\G{\Gamma}
\def\uniontsi{\bigcup_{j \in \mathcal{J}} \{T_\la \psi_{E}\}_{\la \in \eta_j\G_j}}
\def\uniontsbi{\bigcup_{j \in \mathcal{J}} \{T_\la \psi_{F}\}_{\la \in \zeta_j\G_j}}

\title{Characterization and Construction of Pairwise Orthogonal Parseval Frames with Applications to Sampling}
	
	\date{\today}

	\author{}

\begin{document}
			\author{Navneet Redhu, Anupam Gumber, Hartmut Führ, Niraj K. Shukla}
		
		\maketitle

		\begin{abstract}
			In this paper, we provide a characterization of pairwise orthogonal frames with generalized translation-invariant (GTI) structures, based on the unconditional convergence property (UCP). These GTI systems are generated by translating functions over a countable family of closed, co-compact subgroups of a locally compact abelian (LCA) group $G$, where the families of subgroups associated with each system may differ. As an application of this characterization, we establish necessary and sufficient criteria for the orthogonality of various structured systems, including Gabor, wavelet, and shearlet systems on LCA groups. Furthermore, we derive a characterization of  GTI Parseval (tight) frames and present explicit constructions of pairs of GTI systems using filters. Each constructed system satisfies the $\infty$-UCP and admits a Calder\'on sum equal to one. As a consequence of these results, the constructed systems form Parseval frames and are pairwise orthogonal. The proposed construction improves upon the technique in \cite{RGS} by relaxing the stationary assumption on the families of subgroups. Finally, we illustrate the results with examples using $B$-splines as generating functions and discuss applications of pairwise orthogonal frames in sampling theory.
		\end{abstract}
		
			\begin{minipage}{145mm}	
					{\bf Keywords:}\ {Locally compact abelian groups, Parseval frames,  Pairwise orthogonal frames, Generalized translation invariant systems, Unconditional convergence property, Wavelets and B-splines.}\\
					{\bf 2020 Mathematics Subject Classifications:} 42C40, 42C15, 43A70, 65T60.	
				\end{minipage}
		
		\section{Introduction}
		Orthogonal frames, introduced independently by Han and Larson \cite{HL} and  Balan \cite{B2, B2000}, have become fundamental tools in various areas such as signal processing, computer graphics, and engineering. In multiple-access communication, pairwise orthogonal Parseval frames $\mathbb{F}=\{f_m\}_{m \in I}$ and $\mathbb{G}=\{g_m\}_{m \in I}$ enable the recovery of signals $f, g \in \mathcal{H}$ from combined coefficients of the form $\langle f, f_m \rangle + \langle g, g_m \rangle$. This multiplexing principle underlies a variety of communication systems, including radio and television, and has been widely studied in \cite{B2000, BDV2000, AW}. Beyond communication theory, orthogonality plays an important role in frame theory itself. In particular, given a frame $\mathbb{F}$ with canonical dual $S^{-1}\mathbb{F}$, and another frame $\mathbb{H}$ that is orthogonal to $\mathbb{F}$, the system $S^{-1}\mathbb{F} + \mathbb{H}$ provides a non-canonical dual of $\mathbb{F}$. Such constructions are not only theoretically significant but can also result in reduced reconstruction error as compared to canonical duals \cite{BLPY}. Moreover, when two frames in Hilbert spaces are orthogonal, their direct sum naturally forms a superframe for the combined space \cite{B2000}. In sampling theory as well, orthogonal frames guarantee perfect reconstruction even when some frame elements lie outside the target subspace \cite{A, W, W2004}.

The concept of pairwise orthogonal frames has attracted significant attention due to its theoretical relevance and practical importance, particularly within structured systems such as wavelet, Gabor, and shift-invariant frames (see, e.g., \cite{AW, BG, GS2018, GS2019, KLS, KLS1, LY, Ri, SN, SNSCM, Tang, W}). In most of the existing literature, orthogonality is studied for families of structured frames that share a common translation lattice or, more generally, the same co-compact subgroup. However, several applications in sampling theory naturally require orthogonality between structured systems associated with different translation lattices or sequences of co-compact subgroups \cite{W2004}. Such situations arise, for example, when analyzing the orthogonal ranges of sampling operators, which play a central role in stable reconstruction, signal denoising \cite{benedetto-treiber-2001}, and multiple-access communication systems \cite{ALTW2004}. Motivated by these considerations, Aldroubi et al.\ studied the orthogonality of pairs of Gabor and wavelet frames associated with distinct lattice translations in $L^2(\R^d)$ \cite{ALTW2004}. Related results for generalized shift-invariant (GSI) systems with different translation lattices were obtained by Weber in \cite{W}.

The generalized translation-invariant (GTI) systems provide a unified method for analyzing a broad category of function systems, including discrete and continuous systems of wavelets, Gabor, wave-packets, and shearlets \cite{GS2018, JL2016}. The GTI system in the setting of the locally compact abelian (LCA) groups was introduced by Jakobsen and Lemvig in \cite{JL2016}. This system arises by applying translations of the generating functions along a countable collection of closed, co-compact subgroups within the second-countable LCA group $G$; see Section~\ref{sec_preliminaries} for further details.   In view of this unifying nature of GTI systems, it is natural to study properties that are common and fundamental across different structured systems within a single framework. In particular, motivated by applications in sampling theory that require orthogonality between structured systems associated with different translation subgroups, this paper investigates the orthogonality properties of GTI systems for which the underlying families of co-compact subgroups may differ. This setting allows us to simultaneously treat a broad range of structured frames, such as wavelet and Gabor systems, generated by distinct translation structures. More precisely, we consider pairs of GTI systems of the form
\begin{align}
		\bigcup_{j \in \mathcal{J}} \{T_{\lambda} g^{(1)}_p\}_{\la \in \eta_{j}\Gamma_j, \, p \in P_j}
		\qquad \text{and} \qquad
		\bigcup_{j \in \mathcal{J}} \{T_{\lambda} g^{(2)}_p\}_{\la \in \zeta_{j}\Gamma_j, \, p \in P_j},\label{eq_p3_intro_1}
	\end{align}
where $\mathcal{J}$ is a countable index set, $g_j^{(1)}, g_j^{(2)} \in L^2(G)$, $\eta_j$ and $\zeta_j$ are bi-continuous automorphisms of $G$, $P_j$ is countable or uncountable index set, and $\Gamma_j$ is a co-compact subgroup of $G$.   We note that, when considering a single GTI system, the simultaneous use of a family
of automorphisms $(\eta_j)_j$ together with subgroups $(\Gamma_j)_j$ is not strictly
necessary, since the resulting system depends only on the family $(\eta_j(\Gamma_j))_j$. In such a case, one could equivalently absorb the automorphisms into the subgroups by defining $\Gamma'_j=\eta_j\Gamma_j$ and $\eta'_j=\mathrm{id}$. However, automorphisms become essential when studying relationships between several GTI systems, as in \eqref{eq_p3_intro_1}, where both systems are naturally formulated relative to the same family of subgroups $(\Gamma_j)_j$. This viewpoint is particularly important when analyzing orthogonality of coefficients, which is defined with respect to integration over the subgroups $\Gamma_j$ (see Definition~\ref{defn_pairwise_orthogonal} for exact details). The advantages of including automorphisms in the formulation~\eqref{eq_p3_intro_1}, especially from the perspective of sampling applications, are further illustrated in Section~5.

Motivated by the applications discussed above, we address the following problems:
\begin{itemize}
\item[\textbf{(P1).}] Find the necessary and sufficient conditions under which the GTI Bessel (frame) systems in \eqref{eq_p3_intro_1} form pairwise orthogonal Bessel (frame) systems in $L^2(G)$ (in the sense of Definition \ref{defn_pairwise_orthogonal}).
\item[\textbf{(P2).}] Provide explicit constructions of pairwise orthogonal Parseval GTI frames as defined in \eqref{eq_p3_intro_1}.
\end{itemize}

 Gumber and Shukla addressed Problem~(P1) in \cite{GS2018} under the restrictive assumption that $\e_j = \z_j = \mathrm{id}$ (the identity automorphism) for each $j \in \mathcal{J}$, assuming the local integrability condition (LIC). Building on this result, Redhu, Gumber, and Shukla \cite{RGS} proposed a construction method for pairwise orthogonal GTI systems of the form \eqref{eq_p3_intro_1}, also under the same restrictive assumption $\e_j = \z_j = \mathrm{id}$. To the best of our knowledge, extending these results to nontrivial automorphisms $\e_j$ and $\z_j$ under weaker assumptions than the LIC remains an open problem.

The construction method presented in this work is based on periodic filters, also known as matrix mask functions. Filters have long served as a fundamental tool in the construction of frames \cite{BJM, CBR, CHL, CHS, DH, Bin1, HanLu, San}. In a wide range of applications, including signal processing, image processing, and data analysis, filter-based frame constructions play a significant role. As an illustration, one may apply filters to isolate distinctive features from datasets, which in turn enhances tasks in pattern recognition, including fingerprint analysis and facial identification (see, e.g., \cite{C, RTCAW} and related references). Beyond this, in other contexts, such as the construction of a multiresolution analysis (MRA) system, where a signal is represented at different levels of detail, filters play a fundamental role. Such representations form the basis for important applications, including signal compression, noise reduction, and feature extraction. In the MRA setting associated with filters, several extension principles have been introduced to obtain Parseval frames, including the unitary extension principle (UEP) \cite{CG1, CG2021, RS1997} and the oblique extension principle (OEP) \cite{DHRS}.
At the end, we show that pairwise orthogonal GTI frames constructed under this framework possess significant applications in sampling theory.

The main results of this paper can be described as follows:
\begin{itemize}
    \item[(i)] Theorems~\ref{thm_new_characterization_orthogonal_frame}--\ref{thm_char_Parseval_frames} provide rather far-reaching characterizations of various properties of GTI Bessel systems and frames, including orthogonality and tightness, under suitable UCP assumptions.
    \item[(ii)] Direct applications of Theorems \ref{thm_new_characterization_orthogonal_frame}--\ref{thm_characterization_orthogonal_frame} in various structured systems, including wavelet and time-frequency contexts, are formulated in Subsection~\ref{sec_application_to_characterization_result}.
    \item[(iii)] A general construction of GTI systems using filters, together with sufficient conditions ensuring that they satisfy the $\infty$-UCP and form Parseval frames with Calderón sum equal to 1, is formulated in Theorem~\ref{thm_UCP}.
    \item[(iv)] As an application of Theorem~\ref{thm_UCP}, a novel \textit{oblique extension principle for GTI systems} is proved (Theorem~\ref{thm_QEP}). Moreover, under the conditions of Theorem~\ref{thm_UCP} and additional assumptions on the filters, an application of Theorem~\ref{thm_characterization_orthogonal_frame} yields a construction of pairwise orthogonal Parseval frames (Theorem~\ref{thm_pairwise_orthogonal_frames}), which ultimately leads to the construction of $N$ pairwise orthogonal Parseval frames from a given one (Theorems~\ref{thm_construction_of_N_pair_OF}).
    \item[(v)] We prove novel characterizations of orthogonal Shannon sampling (Theorem~\ref{thm_pairwise_orthogonal_samples}). The results in Section~\ref{sec 5} highlight the usefulness of including the automorphisms $(\eta_j)_{j \in J}$ and $(\zeta_j)_{j \in J}$ in the discussion.
\end{itemize}
While our explicit results mostly stay in the realm of Euclidean Fourier, wavelet, and time-frequency analysis, the general setup developed in Section 3 provides a unified view of a variety of settings studied in the literature. We mention the following implications:
  \bei
	\item[(i)] Our characterization result (Theorems \ref{thm_new_characterization_orthogonal_frame}) for pairwise orthogonal frames extends several known results, including the Euclidean case~\cite{W}, the discrete setting~\cite{LH},  the uniform lattice case~\cite{KLS}, and GTI systems in $\ltg$  \cite{GS2018}.  
		\item[(ii)] Because structured constructions, including Gabor, shearlet, wavelet, and wave-packet frames, can be regarded as particular instances of GTI systems~\cite{BCZ, GS2018, GS20192, HLW, JL2016, RS1, RS}, the present findings offer a unified approach for analyzing orthogonal frames within all of these frameworks.
		\item[(iii)] Many important groups, such as $\mathbb{Z}^n$, the torus $\mathbb{T}^n$, and the $p$-adic field $\mathbb{Q}_p$, are examples of LCA groups. Our results provide a unified approach to studying pairwise orthogonal frames on these groups as well.
		\item[(iv)] Our construction method for pairwise orthogonal Parseval frames generalizes several existing approaches, including well-known constructions of orthogonal Parseval wavelet frames in $L^2(\mathbb{R})$~\cite{BJW}, $L^2(\mathbb{R}^n)$~\cite{BG} and $L^2(G)$~\cite{RGS} (see, Section~\ref{sec4} for details).
    \eni

This paper is organized as follows:
	
	Section~\ref{sec_preliminaries} presents the necessary preliminaries and background material for the remainder of the paper.

	In Section~\ref{sec_charactrizations_results}, we first recall the dual $1$-UCP and discuss its properties (see Subsection~\ref{sub_def_UCP}). We then present our main characterization result for pairwise orthogonal GTI Bessel (frame) systems satisfying the dual $1$-UCP (see Theorem~\ref{thm_new_characterization_orthogonal_frame}). This result generalizes \cite[Theorem 3.5]{GS2018} in two ways. First, it applies to the GTI systems associated with different sequences of co-compact subgroups. Second, it holds under the dual $1$-UCP, which is weaker than the dual $\alpha$-LIC condition used in \cite[Theorem 3.5]{GS2018}.  Additionally, Theorem~\ref{thm_char_Parseval_frames} characterizes when a GTI system forms a tight frame. This theorem extends two earlier results: \cite[Theorem 3.12]{FL2019}, proved for GSI systems, and \cite[Theorem 3.5]{JL2016}, proved for GTI systems under the stronger assumption of $\alpha$-LIC. In the final subsection, Subsection~\ref{sec_application_to_characterization_result}, we apply Theorem~\ref{thm_characterization_orthogonal_frame} to derive characterization results for pairwise orthogonal Bessel systems with various structures, including Gabor and wavelet systems.

Section~\ref{sec_charactrizations_results} lays the groundwork for the construction recipes presented in Section~\ref{sec4}. As an initial step, we introduce two GTI systems, $\gtione$ and $\gtitwo$, generated by filters (see Subsection~\ref{subsec_4.1}). Next, in Subsection~\ref{subsec_sufficent_condtions_for_UCP}, Theorem~\ref{thm_UCP} demonstrates that the constructed GTI systems satisfy the $\infty$-UCP and form Parseval frames with Calder\'on sum equal to~$1$. In this setting, the index set $\mathcal{J} \subseteq \mathbb{Z}$ is taken to be one of the following: $\mathbb{Z}$, $\{j\}_{j=j_0}^{\infty}$, $\{j\}_{j=j_0}^{j_1}$, or $\{j\}_{j=-\infty}^{j_0}$, for some $j_0, j_1 \in \mathbb{Z}$. As a special case, when $\mathcal{J}=\{j\}_{j=j_0}^{\infty}$, the GTI system defined in Theorem~\ref{thm_UCP} takes the form
\[
\left\{T_{\lambda}\mathcal{F}^{-1}\Phi_{j_0}\right\}_{\lambda \in \Gamma_{j_0}}
\;\cup\;
\bigcup_{j = j_0}^{\infty} \left\{T_{\lambda}g_p^{(i)}\right\}_{\lambda \in \Gamma_{j},\, p \in P_j},
\]
where $\mathcal{F}^{-1}$ stands for the inverse Fourier transform (see Remark~\ref{rem_comparision with_CG_UEP}). This form of GTI system, which is a Parseval frame, was previously established by Christensen and Goh via the UEP for LCA groups \cite{CG2021}. In the present work, we allow greater flexibility in the choice of the index set and additionally observe that the $\infty$-UCP holds and that the Calder\'on sum equals~$1$.
In Example~\ref{ex_B_spline}, we explicitly construct a pair of GTI systems in $\ell^2(\mathbb{Z})$, each satisfying the $\infty$-UCP. Furthermore, we establish the OEP in Theorem~\ref{thm_QEP}, which provides a more flexible variant of the UEP. Finally, in Subsection~\ref{subsec_pairwise_orthogonal}, we provide constructions of pairwise orthogonal Parseval frames and present a recipe for constructing $N$-Parseval frames that are pairwise orthogonal, starting from a given one.

	Finally, in Section \ref{sec 5}, we discuss applications of orthogonal frames in sampling theory. In particular, Proposition \ref{prop_ness_suff_conditions_for_pos_UCP} provides necessary and sufficient conditions characterizing the orthogonality of two families of samples associated with the given sampling sets under the $1$-UCP assumption. Theorem \ref{thm_pairwise_orthogonal_samples} then establishes that the unions of sampling sets are orthogonal on specified bands if and only if each corresponding individual pair of samples is orthogonal under the dual $1$-UCP assumption. Moreover, Theorem \ref{thm_tight_sample} characterizes the tightness of the union of sampling sets, showing that it holds if and only if each individual sampling set is tight.


        \section{Preliminaries} \label{sec_preliminaries}
	In this section, we establish the foundational concepts and notations necessary for our study. We begin with key definitions and a brief overview of locally compact abelian (LCA) groups, which form the underlying setting for our analysis.
	
	Let $G$ denote a LCA group that is second countable,  equipped with the group operation $+$ and identity element $0$. A homomorphism from $ G \to \mathbb{T},$  is called  \textit{character}, where $\mathbb{T}=\{ z \in \mathbb{C}  : |z|=1  \}$. We denote by $\widehat{G}$ the set of all continuous characters, referred to as the \textit{dual group} of $G$. Equipped with the appropriate topology and the group operation defined by
	\begin{align*}
		(\gamma +\gamma') (x):= \gamma(x)\cdot \gamma'(x) \ \mbox{ for all } \gamma, \ \gamma' \in \widehat{G}, x \in G.
	\end{align*}
    the dual group $\widehat{G}$ is itself a LCA group.
Let $\mu_G$ denote the Haar measure on $G$. For $1 \leq p < \infty$, the space $L^p(G)$ is defined in the standard manner with respect to $\mu_G$. In particular, $L^2(G)$ forms a Hilbert space when equipped with the inner product
	$$\langle f,g \rangle = \int_G f(x) \overline{g(x)} d\mu_{G}(x).$$
    
	The \textit{Fourier transform} $\mathcal{F} : L^1(G) \to C_0(\widehat{G})$ of a function $f \in L^1(G)$ is given by
    $$ \mathcal{F}f(\gamma):=\int_G f(x)\gamma(-x) d\mu_{G}(x),$$
	where $-x$ denotes the inverse of $x$ in the group $G$.
	We use the notation $\widehat{f}$ to represent the Fourier transform of $f$, that is, $\widehat{f}=\mathcal{F}f$.
	A subgroup $\Gamma$ of $G$ is called a \textit{closed co-compact subgroup} of $G$ if $\Gamma$ is closed in $G$ and the quotient space $G/\Gamma$ is compact. Whenever $\Gamma$ is a discrete subgroup of $G$, it is called a (uniform) \textit{lattice} in $G$. The \textit{annihilator} of $\Gamma$ is the subset $\Gamma^\perp=\{\gamma\in \widehat{G} : \gamma(x)=1, \, \forall \, x \in \Gamma \}$. 
	Let $\Omega$ be a \textit{fundamental domain} corresponding to $\Gamma^\perp$. That is, $\Omega$ is a Borel measurable subset of $\widehat{G}$ satisfying
	\begin{equation}\label{eq_1_intro}
		\widehat{G}= \bigcup_{w \in \Gamma^\perp} (w+ \Omega), \ (w+ \Omega) \cap (w'+ \Omega)= \emptyset \mbox{ for }w \neq w', \ w, \ w' \in \Gamma^\perp.
	\end{equation}

       We denote by $\mu_{\Gamma}$ and $\mu_{G/\Gamma}$ the Haar measures on $\Gamma$ and $G/\Gamma$, respectively. Once two among the measures $\mu_G$, $\mu_{\Gamma}$, and $\mu_{G/\Gamma}$ are fixed, the third can be normalized in such a way that the following formula holds for all $f \in L^1(G)$:
	\begin{equation}\label{eq_Weil's_integration_formula}
		\displaystyle\int\limits_{G} f(x) d \mu_G(x)= \displaystyle\int\limits_{G/\Gamma} \displaystyle\int\limits_{\Gamma} f(x+\gamma) \ d \mu_{\Gamma}(\gamma) \ d\mu_{G/\Gamma}(\dot{x}),
	\end{equation}
	where $\dot{x}$ denotes the coset $x + \Gamma.$ We normalize the Haar measures $\mu_G$, $\mu_\Gamma$, and $\mu_{G/\Gamma}$ so that \textit{Weil's integral formula}~(\ref{eq_Weil's_integration_formula}) is true. With this choice, the induced dual measures on $\widehat{G}$, $\widehat{G/\Gamma}\cong \Gamma^\perp$, and $\widehat{\Gamma} \cong \widehat{G}/\Gamma^\perp$ satisfy the identity
	\begin{equation}\label{eq_weil_integral_formula_2}
		\int\limits_{\widehat{G}} \widehat{f}(\omega) d \mu_{\widehat{G}}(\omega)=\int\limits_{\widehat{G}/\Gamma^\perp} \int\limits_{\Gamma^\perp} \widehat{f}(\omega \xi) d\mu_{\Gamma^\perp}(\xi) d\mu_{\widehat{G}/\Gamma^\perp} (\dot{\omega}).
	\end{equation}
    We normalize Haar measures and their dual counterparts so that the Plancherel theorem holds. Consequently, the system of measures satisfies a uniqueness principle: once two non-dual measures are chosen from
$\mu_G,\ \mu_\Gamma,\ \mu_{G/\Gamma},\ \mu_{\widehat{G}},\ \mu_{\Gamma^\perp},\ \mu_{\widehat{G}/\Gamma^\perp},$
the remaining measures are uniquely fixed by enforcing Weil’s integral relations \eqref{eq_Weil's_integration_formula} and \eqref{eq_weil_integral_formula_2}. The \textit{covolume} of $\Gamma$ is defined by 
	$$s(\Gamma):=\int_{G/ \Gamma} \ d\mu_{G/ \Gamma}(\dot{x}).$$
	For further details regarding LCA groups, we refer to \cite{BHP1, BMR, COB, GJ, GS2018, HRJDS, HR, JL2016, KL, KK, R}.
		
	We now turn to generalized translation-invariant (GTI) systems. These systems over LCA groups were first studied by Jakobsen and Lemvig in \cite{JL2016}. The definition is given below.
	
	\bd\label{defn_GTI}
		Consider a countable index set $\mathcal{J}$. A \textit{generalized translation-invariant (GTI) system} is given by 
\bee\label{eq_defn_GTI}	\bigcup_{j \in \mathcal{J}} \{T_{\la} g_p : \la \in \Gamma_j, \, p \in P_j\},\ene
		where, for $\lambda \in G$, the \emph{translation operator} is defined by $T_\lambda : L^2(G) \to L^2(G), \ (T_\lambda f)(x) := f(x-\lambda), \ x \in G.$
		Here, each $P_j$ is a (countable or uncountable) index set, each $\Gamma_j$ is a closed co-compact subgroup of $G$, and $\{g_p\}_{p \in P_j} \subset L^2(G)$.
	\ed	
Translation-invariant (TI) systems correspond to the special case of GTI systems where the family of subgroups reduces to a single subgroup, that is, if $\Gj=\G$ for each $j\in\mathcal{J}$.  
    
	As in~\cite{JL2016}, the GTI system introduced in Definition~\ref{defn_GTI} is assumed to satisfy the following hypotheses.\\

\noindent\textbf{Standing assumptions:} 
We begin by fixing some notation. Take a countable subset $\J$ of $\mathbb{Z}$.  
For every $j$ in $\J$, associate a measure space $(\pj, \spj, \mupj)$.
If $X$ is a topological space, its Borel $\sigma$-algebra will be denoted by $\mathcal{B}_X$. 
For every $j \in \J$, the Cartesian product $\pj \times G$ is viewed as a measurable space equipped with the product $\sigma$-algebra $\spj \otimes \mathcal{B}_G$ and the corresponding product measure $\mupj \otimes \mu_G$.
Throughout the paper, we assume that for each $j \in \J$ the following properties are satisfied:
\begin{enumerate}
	\item[(I)]\label{eq_I} The measure space $(\pj, \spj, \mupj)$ is $\sigma$-finite.
	
	\item[(II)]\label{eq_II} The function $p \mapsto g_{j,p}$, defined from $(\pj, \spj)$ into $(L^2(G), \mathcal{B}_{L^2(G)})$, is measurable.
	
	\item[(III)]\label{eq_III} The map $(p,x) \mapsto g_{j,p}(x)$ from $(\pj \times G, \spj \otimes \mathcal{B}_G)$ into $(\mathbb{C}, \mathcal{B}_{\mathbb{C}})$ is measurable.
\end{enumerate}
In Sections~\ref{sec4} and \ref{sec 5}, we restrict ourselves to countable index sets $P_j$ endowed with the counting measure. Under this choice, the assumptions (I)–(III) are automatically satisfied (see \cite{JL2016}).	

		A GTI system (defined in \eqref{eq_defn_GTI})  is said to form a \textit{GTI frame} relative to the  $\{L^2(P_j \times \Gamma_j): j \in \J\}$ if there exist two constants $0<A, B < \infty$ such that  
\bee\label{eq_defn_frame}
A\norm{f}^2 \leq \sum_{j \in \J} \int\limits_{P_j} \int\limits_{\Gamma_j} 
|\langle f, T_{ \la} g_p \rangle|^2 \,
d\mu_{\Gamma_j}(\la)\, d\mu_{P_j}(p) \leq B \norm{f}^2 
\quad \text{for all } f \in \ltg.
\ene  
The constants $A$ and $B$ are referred to as the \textit{frame bounds}. 
If $A$ and $B$ coincide, the frame is called \textit{tight}. 
In the particular case where $A = B = 1$, it is termed a 
\textit{GTI Parseval frame}.  If only the right-hand side inequality in \eqref{eq_defn_frame} is satisfied, then the system given in \eqref{eq_defn_GTI} is called a 
\textit{GTI Bessel system}.

	We study the orthogonality of two GTI Bessel systems of the form  
	\begin{equation}\label{eq_more_GTI_version}
	\newgtione
		\qquad \text{and} \qquad 
	\newgtitwo,
	\end{equation}
    where \(\eta\) and \( \zeta \) are bi-continuous group automorphisms of $G$. 
	Suppose both the GTI systems defined in \eqref{eq_more_GTI_version} are Bessel, then we define the \emph{mixed dual Gramian operator} $\Theta_{\e, \z}$ associated with these GTI systems, is given by
	\begin{align}\label{mixed_dual_grammian_operator}
		\Theta_{\e, \z}: \ltg \to \ltg; f \mapsto  \sum_{j \in \mathcal{J}} \int\limits_{P_j} \int\limits_{\Gamma_j} 
		\langle f, T_{\eta \la} g^{(1)}_p \rangle \, T_{\zeta \la} g^{(2)}_p \,
		d\mu_{\Gamma_j}(\la)\, d\mu_{P_j}(p).
	\end{align}
  In the special case $\e=\z=\mathrm{Id}$, we simply write $\Theta_{\e,\z}=\Theta$.
	\bd\label{defn_pairwise_orthogonal}
	Let $\newgtione$ and  
	$\newgtitwo$  
	be two GTI Bessel (or frame) systems. We say that these systems are \emph{pairwise orthogonal Bessel (frame) systems} if the associated mixed dual Gramian operator satisfies $\Theta = 0$ for all $f \in L^2(G)$. Furthermore, if the GTI systems are Parseval and pairwise orthogonal, then they are called \emph{pairwise orthogonal Parseval frames}.
	\ed		
In the classical setting $G=\mathbb{R}^d$, the orthogonality of shift-invariant systems with different lattice translations was studied by Weber~\cite{W}. More generally, the orthogonality of systems of the form~\eqref{eq_more_GTI_version} is closely related to the analysis of orthogonal sampling transforms; see, for instance,~\cite{W2004}. From an operator-theoretic perspective, such orthogonality is naturally characterized by the vanishing of the associated mixed dual Gramian operator, which guarantees that the ranges of the corresponding sampling operators are mutually orthogonal. This property is fundamental in applications where independent or non-interfering signal components are required, such as signal denoising~\cite{benedetto-treiber-2001} and multiple-access communication systems~\cite{ALTW2004}. In these contexts, orthogonality of the sampling ranges ensures stable reconstruction and effective separation of information. A detailed discussion of these connections is presented in Section~\ref{sec 5}.

\textit{In the remainder of this paper, $\operatorname{Aut}(G)$ denotes set of all bi-continuous group automorphisms of $G$, \(\eta, \zeta \in \operatorname{Aut}(G)\), and \(\Gamma_j^\perp \subseteq \widehat{G}\) is the annihilator of a co-compact subgroup \(\Gamma_j\). Moreover, we have \((\eta\Gamma_j )^\perp = \eta^\ast \Gamma_j^\perp\), where \(\eta^\ast := (\eta')^{-1}\) denotes the inverse of the adjoint of \(\eta\).}
\section{Characterization of Orthogonal GTI Systems}\label{sec_charactrizations_results}		
The main objective of this section is to establish the necessary and sufficient conditions under which GTI Bessel (or frame) systems, defined in \eqref{eq_more_GTI_version}, are pairwise orthogonal in the sense of Definition~\ref{defn_pairwise_orthogonal}. We also provide a characterization of when a GTI system forms a tight frame.

To accomplish these objectives, Subsection~\ref{sub_def_UCP} introduces two central concepts: the dual $1$-unconditional convergence property (dual $1$-UCP) and dual $\infty$-UCP for a pair of GTI systems. They play a key role in our main characterization of pairwise orthogonality.
	Section~\ref{sec_SCR} then presents the main characterization theorem (Theorem~\ref{thm_new_characterization_orthogonal_frame}), which identifies when two GTI systems, defined in \eqref{eq_more_GTI_version}, are pairwise orthogonal Bessel (or frame) systems. We also include Theorem~\ref{thm_char_Parseval_frames}, which characterizes a GTI tight frame. The technical details and proofs of these results appear in Section~\ref{sec_proof_of_characterization_result}. Finally, Section~\ref{sec_application_to_characterization_result} applies the main theorem to concrete cases, yielding characterization results for orthogonal frames built from Gabor, wavelet, composite wavelet, and shearlet systems.	

    \subsection{Unconditional convergence property}\label{sub_def_UCP}
In this subsection, we introduce UCP, which plays a central role in analyzing the frame properties of GTI systems.  Führ et al.~\cite{FL2019} introduced the UCP and its variants (such as the dual $1$-UCP and dual $\infty$-UCP), which provide a weaker and more flexible alternative to the classical LIC. These concepts were first developed for GSI systems, a subclass of GTI systems. The $1$-UCP condition for a GTI system was introduced in \cite{LVV}.
We begin by defining the set
$$ \mathcal{D}_{B}:= \{f \in L^2(G) :\widehat{f} \in L^{\infty}(\widehat{G})  \mbox{ and } \operatorname{supp}\widehat{f} \ \mbox{ is compact in } \widehat{G}\setminus B \},$$
where \( B \subset \widehat{G} \) is a Borel set with \( \mu_{\widehat{G}}(B) = 0 \), referred to as a \emph{blind spot}. The exclusion of the null set $B$ ensures that subsequent Fourier-domain identities hold almost everywhere without ambiguity. The space $\mathcal{D}_B$ is dense in $L^2(G)$, since compactly supported functions are dense in $L^2(G)$, and removing a null set does not affect density. Moreover, \( \mathcal{D}_{B} \) is translation invariant, because $\operatorname{supp} \widehat{T_x f}= \operatorname{supp} \widehat{f}$.

Given a pair of GTI systems $\newgtione$ and $\newgtitwo$, and for any \( f \in \mathcal{D}_B \), we define a family of functions $\newwj: G \to \mathbb{C}$ by 
\begin{equation}\label{eq_wfg1g2j}
	\newwj(x)=\dnewwj \mbox{ for } x \in G.
\end{equation}

      Assuming conditions (I)–(III) hold, the expressions in \eqref{eq_wfg1g2j} are well-posed and finite.  If the GTI systems $\newgtione$ and $\newgtitwo$ are Bessel, then using a similar argument as in \cite{JL2016, KL}, each $\newwj$ is a trigonometric polynomial, $\Gjp$-periodic, continuous, and bounded.
        
  The function $\neww: G \to \mathbb{C}$ is defined by
\begin{align}
	\neww(x)&=\sum_{j \in \J}\newwj(x) \nonumber\\
	&=\sum_{j \in \J} \dnewwj \label{eq_wfgi}
\end{align}
provided the series on the right-hand side converges. In the special case $\e=\z=\mathrm{Id}$, we simply write $\neww=\w$ and $\newwj=\wj$. Also if  $\gpone=\gptwo$ for all $p \in \pj, j \in \J$, we write $\w=\wgpone$ and $\wj=\wgponej$. 
\begin{rem} In view of \cite{LVV} and by following a similar argument to that in \cite[Lemma~3.3]{FL2019}, it is clear that
	\leavevmode 
	\begin{itemize}
		\item[(i)] Under standing assumptions (I)–(III),  $\sum\limits_{j \in \J}\wgponej$ converges in [0,$\infty$], ensuring that $\wgpone$ is well-defined,  although $\w$ may not be.
		\item[(ii)]  If each GTI system forms a Bessel family, then $\wj$ converges absolutely and uniformly on compact subsets of \( G \). 
	\end{itemize}
\end{rem}

We are now ready to introduce the notion of unconditional convergence, motivated by \cite{FL2019}.
\begin{defn}\label{defn_UCP}
	Let	$\newgtione$ and $\newgtitwo$  be two GTI systems in $\ltg$.
	\begin{itemize}
		\item[(i)] They satisfy the \textit{dual $1$-unconditional convergence property (dual 1-UCP)}  w.r.t. $B \in \mathcal{B}$, if for each $f \in \mathcal{D}_B(G)$,  $\neww$ is almost periodic and 
		\begin{equation}\label{eq_1UCP}
			\neww=\displaystyle\sum_{j \in \J} \newwj
		\end{equation}
		converges unconditionally w.r.t. the mean $M(|\cdot|)$. That is, for any $\epsilon > 0$, one can find a finite subset $\J' \subset \J$ such that for all finite sets $\J''$ containing $\J'$, 
		$$M\left(\left|\neww-\sum\limits_{j \in \J''}\newwj\right|\right)< \epsilon,$$
		where $M$ is the mean, and its exact definition can be found in \cite[Theorem 3.6]{FL2019}.
		\item[(ii)] They satisfy the  \textit{dual $\infty$-UCP} if the series in \eqref{eq_1UCP} converges uniformly on \( G \).
	\end{itemize}
	In case $\gpone=\gptwo$ for all $p \in \pj$, we refer to the above conditions collectively as the \textit{$\alpha$-UCP} for the system  $\newgtione$, where $\alpha \in \{1, \infty\}.$ 
\end{defn}
Here, $M(\cdot)$ denotes the invariant mean on the space of almost periodic functions. Convergence \lq\lq with respect to the mean\rq\rq\ is weaker than uniform convergence and does not require pointwise convergence. The dual $1$-UCP therefore provides a weaker alternative to uniform convergence assumptions such as the LIC.

Note that in the definition of the dual $1$-UCP, the function $\neww$ is assumed to be almost periodic, whereas in the dual $\infty$-UCP this follows automatically from uniform convergence.  We now describe the relationships among the dual $\infty$-UCP (respectively, $\infty$-UCP), the dual $1$-UCP (respectively, $1$-UCP), and the dual $\alpha$-LIC (respectively, $\alpha$-LIC) for GTI systems. For the definitions of the dual $\alpha$-LIC (respectively, $\alpha$-LIC), we refer to \cite{JL2016, GS2018}.

\begin{rem}\label{rem_relation_between_UCP_LIC}
 The following relationships hold for GTI systems:
	\begin{itemize} 
		 \item[(i)] If the dual $\alpha$-LIC holds for the given two GTI systems, then the dual $\infty$-UCP holds, and consequently, the dual $1$-UCP holds as well. The proof follows by adapting the argument in \cite[Remark 6]{FL2019}, replacing the GSI structure with the GTI setting.
		\item[(ii)] If the $\alpha$-LIC  holds for a given GTI system, then the $\infty$-UCP holds, and hence the $1$-UCP also holds for that GTI system \cite{LVV}.
	\end{itemize}
	The logical implications among these properties can be visualized as follows:\\~\\
    	\centering	\begin{tikzpicture}[
		box/.style={
			rectangle, draw=blue!70, fill=black!20, very thick,
			minimum height=1.2cm, minimum width=3cm,
			text centered, rounded corners, font=\small
		},
		arr/.style={
			-{Stealth[length=3mm, width=2mm]}, thick, blue
		},
		node distance=2cm and 2.3cm
		]
		
		\node[box] (dualAlphaLIC) {Dual $\alpha$-LIC };
		\node[box, right=of dualAlphaLIC] (dualInfinityUCP) {Dual $\infty$-UCP };
		\node[box, right=of dualInfinityUCP] (dualOneUCP) {Dual $1$-UCP};
		
		
		\draw[arr] (dualAlphaLIC) -- (dualInfinityUCP);
		\draw[arr] (dualInfinityUCP) -- (dualOneUCP);
		
		%
	\end{tikzpicture}
\end{rem}		
        \begin{rem}\label{rem_dual_UCP}
	The Bessel property and the $\infty$-UCP are generally independent of each other; neither implies the other. However, by a straightforward adaptation of the argument in \cite[Lemma~3.9]{FL2019}, if one GTI system is Bessel and the other satisfies the $\infty$-UCP, then together they satisfy the dual $\infty$-UCP. The same holds for the $1$-UCP.
\end{rem}

For further details on the connection between Bessel systems and the 1-UCP (or $\infty$-UCP), we refer the reader to \cite{FL2019,LVV}. With the unconditional convergence property in place, we are now in a position to state our main result: a characterization of when two GTI Bessel systems are pairwise orthogonal under the dual \(1\)-UCP condition. The dual $1$-UCP ensures that the auxiliary function $\neww$ admits a well-defined Fourier expansion whose coefficients can be computed explicitly. This is the key ingredient in characterizing orthogonality.

	
		\subsection{Statement of Characterization result}\label{sec_SCR}
		In this section, we provide a characterization for two GTI Bessel (frame) systems, defined in \eqref{eq_more_GTI_version}, to be pairwise orthogonal under the  $1$-UCP condition. To the best of our knowledge, a necessary and sufficient Fourier-domain characterization for pairwise orthogonality of GTI systems under $1$-UCP associated with possibly different translation subgroups has not been previously established, even in the Euclidean setting $L^2(\mathbb{R}^n)$. In this context, Theorem~\ref{thm_new_characterization_orthogonal_frame} gives necessary and sufficient conditions for GTI Bessel (frame) systems in $\ltg$ to be pairwise orthogonal. We return to this result in Subsection~\ref{sec_application_to_characterization_result} and Section~\ref{sec 5}, where we discuss its further consequences and applications.

\begin{thm}\label{thm_new_characterization_orthogonal_frame}
	Let \( \newgtione \) and \( \newgtitwo \) be two GTI Bessel (frame) systems  in \( L^2(G) \) such that one of the system  is satisfying the  \(1\)-UCP. Then the following assertions are equivalent:
	\begin{itemize}
		\item[(i)] 	The systems \( \newgtione \) and \( \newgtitwo \) are pairwise orthogonal Bessel (frame).
		\item[(ii)] For every \( \alpha \in \umzero \), we have
		\begin{equation*}\label{eq_t_alpha}
			\ta(\gamma):=	\dnewtna =0  \mbox{ for a.e. } \gamma \in \hg,
		\end{equation*}
		and 
		\begin{equation*}
			t_0(\gamma):=\dnewtza	=0  \mbox{ for a.e. } \gamma \in \hg.
		\end{equation*} 
	\end{itemize}
\end{thm}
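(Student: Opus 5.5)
Our plan is to reduce orthogonality to the vanishing of the almost periodic function $\neww$ for all $f$ in the dense class $\mathcal{D}_B$, and then to read off the conditions on $\ta$ and $t_0$ from its Fourier--Bohr coefficients. The dual $1$-UCP is available: by Remark~\ref{rem_dual_UCP}, one system is Bessel and the other satisfies the $1$-UCP, so the pair has the dual $1$-UCP.

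\emph{Step 1 (reduction).} Since both systems are Bessel, $\Theta_{\e,\z}$ is bounded. For $f\in\mathcal{D}_B$ and $x\in G$,
\[
\langle \Theta_{\e,\z} T_x f, T_x f\rangle = \neww(x).
\]
Suppose $\Theta_{\e,\z}=0$. Then $\neww\equiv 0$ for every $f\in\mathcal{D}_B$.

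Conversely, suppose $\neww\equiv 0$ for all $f\in\mathcal{D}_B$. Then $\langle\Theta_{\e,\z} f,f\rangle=0$ on a dense set. By boundedness this holds on all of $\ltg$, and polarization (valid in a complex Hilbert space) gives $\Theta_{\e,\z}=0$. Thus (i) is equivalent to $\neww\equiv0$ for all $f\in\mathcal{D}_B$.

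Because $\neww$ is almost periodic, it vanishes identically if and only if all its Fourier--Bohr coefficients $M\big(\neww\,\overline{\alpha(\cdot)}\big)$, $\alpha\in\hg$, vanish.

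\emph{Step 2 (computing the coefficients).} Fix $j$. Write both inner products on the Fourier side, apply Plancherel, and use Weil's formula on $\Gamma_j$ with its dual $\hg/\Gj^{\perp}$. As in \cite{JL2016, GS2018}, but now with $\eta\la$ and $\z\la$ tracked through $\widehat{T_{\eta\la}g}(\g)=\overline{\g(\eta\la)}\,\widehat g(\g)$, this yields a Fourier series for $\newwj$ in the characters $\alpha\in\Gj^\perp$ with coefficient
\[
c_j(\alpha)=\frac{1}{s(\Gj)}\int_{\hg}\widehat f(\g)\,\overline{\widehat f(\g+\alpha)}\int_{P_j}\overline{\widehat{\gpone}(\eta^\ast\g)}\,\widehat{\gptwo}(\z^\ast(\g+\alpha))\,d\mu_{P_j}(p)\,d\mu_{\hg}(\g),
\]
after a change of variables by $\eta^\ast,\z^\ast$ (possibly absorbing modulus factors into the normalization). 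This is where the automorphisms enter. Since $f\in\mathcal{D}_B$ has compactly supported bounded Fourier transform, only finitely many $\alpha$ contribute for each $j$, and the integrals converge absolutely by the Bessel bounds.

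The dual $1$-UCP means the series $\sum_j\newwj$ converges in the mean $M(|\cdot|)$. Since $|M(h\bar\alpha)|\le M(|h|)$, Fourier--Bohr coefficients are continuous for this convergence, so we may interchange them with the sum:
\[
M(\neww\,\bar\alpha)=\sum_{j:\alpha\in\Gj^\perp}c_j(\alpha)=\int_{\hg}\widehat f(\g)\,\overline{\widehat f(\g+\alpha)}\,\ta(\g)\,d\mu_{\hg}(\g),
\]
with $t_0$ in place of $\ta$ when $\alpha=0$. For $\alpha\notin\union$ this coefficient is $0$. I expect this interchange, together with a rigorous justification that $\ta$ is locally integrable (so that the final integral is meaningful), to be the main technical obstacle. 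I would handle it by adapting \cite[Theorem~3.6 and Lemma~3.3]{FL2019}, bounding $\sum_j\int|\cdots|$ by Cauchy--Schwarz and the Bessel bounds of the two systems.

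\emph{Step 3 (localization).} Sufficiency of (ii) is now immediate from Steps 1--2. For necessity, assume all the integrals above vanish for every $f\in\mathcal{D}_B$. To isolate $t_0$, take $\widehat f=\chi_K$ for small compact sets $K$ away from $B$; this gives $\int_K t_0=0$ for all such $K$, hence $t_0=0$ a.e. For $\alpha\ne0$, take $\widehat f=\chi_{K}+\chi_{K+\alpha}$ and also $\chi_{K}+i\chi_{K+\alpha}$, with $K$ small enough that $K\cap(K+\alpha)=\emptyset$. Both choices lie in $\mathcal{D}_B$ since $B$ is null. The resulting cross terms combine to give $\int_K \ta=0$ for all such $K$, hence $\ta=0$ a.e. by Lebesgue differentiation on $\hg$ (second countable, so small neighbourhoods generate the Borel sets). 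Finally, orthogonality is a statement about $\Theta_{\e,\z}$ alone, so the frame version of the theorem follows from the Bessel version.
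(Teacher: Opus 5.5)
There is a genuine gap in Steps 1--2. You analyze $w(x)=\langle\Theta_{\eta,\zeta}T_xf,T_xf\rangle$, but for $\eta\neq\zeta$ this function is in general not almost periodic, and its $j$-th term has no Fourier expansion over $\Gamma_j^\perp$. On the Fourier side, the integral over $\lambda\in\Gamma_j$ couples the frequency $\gamma$ of the first inner product with the frequency $\gamma'$ of the second only through the constraint $\eta'\gamma-\zeta'\gamma'\in\Gamma_j^\perp$. The resulting $x$-frequency is therefore $\gamma'-\gamma=\gamma'-\eta^\ast(\zeta'\gamma'+\omega)$ with $\omega\in\Gamma_j^\perp$, and it runs through a continuum unless $\eta=\zeta$. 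Even when $\eta=\zeta$, it lies in $\eta^\ast\Gamma_j^\perp$, not in $\Gamma_j^\perp$.

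A concrete case: take $G=\mathbb{R}$, a single $j$, $\Gamma_j=\mathbb{Z}$, $\eta=\mathrm{id}$, $\zeta x=2x$, and $\widehat f=\widehat{g^{(1)}}=\widehat{g^{(2)}}=\chi_{[0,1/4]}$. Both systems are Bessel and the $1$-UCP holds trivially. Yet Poisson summation gives $w(x)=\tfrac12\int_0^{1/4}e^{-\pi i xu}\,du$, which is a nonzero function in $C_0(\mathbb{R})$. All its Bohr coefficients therefore vanish, whereas your formula predicts $c(0)\neq 0$. This has three consequences. First, ``$w\equiv 0$ iff its Bohr coefficients vanish'' is not available. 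Second, your $c_j(\alpha)$, which leaves $\widehat f$ undilated in both slots while composing $\widehat{g^{(1)}}$ and $\widehat{g^{(2)}}$ with $\eta^\ast$ and $\zeta^\ast$, does not arise from any change of variables in this integral. Third, Remark~\ref{rem_dual_UCP} cannot be invoked for the pair over $\eta\Gamma_j$ and $\zeta\Gamma_j$: in the example the pair fails the dual $1$-UCP, since $w$ is not almost periodic.

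The missing idea, which is how the paper proceeds, is to test $\Theta_{\eta,\zeta}$ against \emph{differently dilated} copies of $f$ in the two slots. Since $D_\eta$ and $D_\zeta$ are unitary, $\Theta_{\eta,\zeta}=0$ iff $D_\zeta\Theta_{\eta,\zeta}D_\eta^{-1}=0$. The intertwining relation $D_\eta T_{\eta\lambda}=T_\lambda D_\eta$ shows that $D_\zeta\Theta_{\eta,\zeta}D_\eta^{-1}$ is the mixed dual Gramian of $\bigcup_j\{T_\lambda D_\eta g_p^{(1)}\}_{\lambda\in\Gamma_j,\,p\in P_j}$ and $\bigcup_j\{T_\lambda D_\zeta g_p^{(2)}\}_{\lambda\in\Gamma_j,\,p\in P_j}$, two systems over the \emph{same} subgroups. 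Bessel bounds carry over to these systems. So does the $1$-UCP: the auxiliary functions of the dilated system for $f$ are those of the original system for $D_\eta^{-1}f$, composed with $\eta$. Remark~\ref{rem_dual_UCP} now legitimately gives the dual $1$-UCP, and Theorem~\ref{thm_characterization_orthogonal_frame} applies. Since $\widehat{D_\eta g}$ is a positive constant times $\widehat g\circ\eta^\ast$, its conditions are exactly (ii).

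In fact your $c_j(\alpha)$ is, up to a positive constant, precisely the Bohr coefficient of $x\mapsto\langle\Theta_{\eta,\zeta}D_\eta^{-1}T_xf,\,D_\zeta^{-1}T_xf\rangle$. Once you replace $w$ by this function, your Steps 2--3 go through. Your localization with $\widehat f=\chi_K+\chi_{K+\alpha}$ and $\chi_K+i\chi_{K+\alpha}$ is then a fine substitute for the paper's density argument showing $M_{\bar t_\alpha}T_{-\alpha}=0$.
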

Recall that for each $\alpha \in \widehat{G}$, the function $t_\alpha$ denotes the Fourier coefficient of the auxiliary function $\neww$ corresponding to frequency $\alpha$.
In view of Remark \ref{rem_relation_between_UCP_LIC},  a similar result holds under the  $\infty$-UCP condition. To prove Theorem~\ref{thm_new_characterization_orthogonal_frame}, we first establish the following auxiliary result.



\begin{thm}\label{thm_characterization_orthogonal_frame}
	Let \( \gtione \) and \( \gtitwo \) be two GTI Bessel (frame) systems in \( L^2(G) \) satisfying the dual \(1\)-UCP. Then the following assertions are equivalent:
	\begin{itemize}
		\item[(i)] For every \( \alpha \in \umzero \), we have
		\begin{equation*}\label{eq_t_alpha}
			\ta(\gamma):=	\dtna =0  \mbox{ for a.e. } \gamma \in \hg,
		\end{equation*}
		and 
		\begin{equation*}
			t_0(\gamma):=\dtza	=0  \mbox{ for a.e. } \gamma \in \hg.
		\end{equation*}
		\item[(ii)] The systems \( \gtione \) and \( \gtitwo \) are pairwise orthogonal.
	\end{itemize}
\end{thm}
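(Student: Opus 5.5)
The plan is to reduce pairwise orthogonality to the vanishing of the auxiliary function $\w$, and then read off $\w$ through its Fourier coefficients, which are computed from $\hat f$ and the functions $t_\alpha$.

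\textbf{Step 1: orthogonality versus $\w$.} Since both systems are Bessel, the mixed dual Gramian $\Theta$ is a bounded operator; this follows from Cauchy--Schwarz together with the two Bessel bounds. Because $\mathcal{D}_B$ is dense, $\Theta=0$ holds if and only if $\langle \Theta f, f\rangle=0$ for all $f\in\mathcal{D}_B$. The equivalence with the quadratic form needs polarization: $\Theta$ is not self-adjoint, but the sesquilinear form $(f,h)\mapsto\langle\Theta f,h\rangle$ vanishes identically iff its diagonal does, because complex polarization works for arbitrary sesquilinear forms. Next, $\mathcal{D}_B$ is translation invariant and $\langle\Theta T_xf,T_xf\rangle=\w(x)$. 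Hence $\Theta=0$ if and only if $\w\equiv 0$ for every $f\in\mathcal{D}_B$. Conversely, if $\w\equiv0$ for all such $f$, then setting $x=0$ gives $\langle\Theta f,f\rangle=0$, and polarization plus density give $\Theta=0$.

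\textbf{Step 2: Fourier coefficients of $\w$.} Fix $f\in\mathcal{D}_B$ and compute each $\wj$ via Plancherel and Weil's formula \eqref{eq_weil_integral_formula_2} on $\widehat G/\Gjp$, following the standard periodization argument of \cite{JL2016,FL2019}. This yields the finite trigonometric expansion
\[
\wj(x)=\sum_{\alpha\in\Gjp}\Big(\frac{1}{s(\Gj)}\int_{\hg}\hat f(\gamma)\overline{\hat f(\gamma+\alpha)}\int_{P_j}\overline{\widehat{\gpone}(\gamma)}\widehat{\gptwo}(\gamma+\alpha)\,d\mu_{P_j}(p)\,d\mu_{\hg}(\gamma)\Big)\,\alpha(x).
\]
The sum is finite because $\operatorname{supp}\hat f$ is compact. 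The dual $1$-UCP lets us interchange the mean with the sum over $j$. Consequently the Fourier coefficient $M(\w\,\overline{\alpha})$ equals $\int_{\hg}\hat f(\gamma)\overline{\hat f(\gamma+\alpha)}\,t_\alpha(\gamma)\,d\gamma$ for $\alpha\in\union$, and vanishes for all other $\alpha$. Interchanging the integral over $\gamma$ with the sum over $j$ requires $\sum_j$ to converge in $L^1_{loc}$; I expect this to follow from the Bessel bounds, which give $\sum_j\frac1{s(\Gj)}\int_{P_j}|\widehat{\gpi}|^2\le B_i$ a.e.

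\textbf{Step 3: the equivalence.} Suppose (i) holds. Then every Fourier coefficient of the almost periodic function $\w$ vanishes, so $\w=0$ by uniqueness of Fourier coefficients for almost periodic functions, and Step 1 gives (ii). Now suppose (ii) holds. Then $\w\equiv0$ for all $f$, so every coefficient is zero. For $\alpha=0$ the coefficient is $\int|\hat f|^2t_0=0$; choosing $\hat f=\mathbf 1_K$ over small compact sets $K$ and applying Lebesgue differentiation gives $t_0=0$ a.e.

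\textbf{Main obstacle.} The hardest step is extracting $t_\alpha=0$ a.e. for $\alpha\ne0$ from the identities $\int\hat f(\gamma)\overline{\hat f(\gamma+\alpha)}t_\alpha(\gamma)\,d\gamma=0$. I plan to use the standard trick from \cite{FL2019,JL2016}: take $\hat f=\mathbf 1_{U}+c\,\mathbf 1_{U+\alpha}$, where $U$ is a small neighbourhood with $U\cap(U+\alpha)=\emptyset$ and $U$ is small enough that no other $\beta\in\union$ contributes cross terms. The existence of such $U$ needs some care, since $\union$ need not be discrete; if it is not, I would vary $c$ over complex values and use that only finitely many $\alpha$ give nonzero coefficients on a compact support. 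This would give $\int_U t_\alpha=0$ for all small $U$, hence $t_\alpha=0$ a.e. Finally, Theorem \ref{thm_new_characterization_orthogonal_frame} should follow from this result by applying it to the systems with subgroups $\eta\Gj$ and $\zeta\Gj$ after the change of variables $\gamma\mapsto\eta^\ast\gamma$, using $(\eta\Gj)^\perp=\eta^\ast\Gjp$ together with Remark \ref{rem_dual_UCP}.
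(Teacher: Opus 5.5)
Your proposal is correct and shares the paper's backbone. You reduce $\Theta=0$ to $w_{f;g_p^{(1)},g_p^{(2)}}\equiv0$ for $f\in\mathcal{D}_B$. You identify its Fourier coefficient at $\alpha$, via the dual $1$-UCP, as $\int_{\widehat G}\widehat f(\gamma)\overline{\widehat f(\gamma+\alpha)}\,t_\alpha(\gamma)\,d\mu_{\widehat G}(\gamma)$ (Lemma~\ref{lem_Fourier_cofficents} with \eqref{eq_prop_1}). You then use uniqueness for almost periodic functions. Your (ii)$\Rightarrow$(i) is the paper's argument.

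The genuine difference is in (i)$\Rightarrow$(ii). The paper passes through Proposition~\ref{prop_Fourier_multiplier}: vanishing of $t_\alpha$ for $\alpha\neq0$ makes $\Theta$ commute with translations, hence a Fourier multiplier whose symbol is then identified as $t_0=0$. You instead note that under (i) every coefficient vanishes, including the zeroth one $\int|\widehat f|^2t_0$. So $w\equiv0$, and polarization plus density give $\Theta=0$ directly. Your route avoids the multiplier theorem and is shorter for this theorem. The paper's route buys the multiplier description of $\Theta$, which it reuses for Theorem~\ref{thm_char_Parseval_frames}.

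Your \emph{main obstacle} is not really one. Each coefficient identity involves only $t_\alpha$, so no cross terms from other $\beta$ can occur, and discreteness of $\bigcup_j\Gamma_j^\perp$ is irrelevant. Polarize the sesquilinear form $(f,g)\mapsto\int\widehat f(\gamma)\overline{\widehat g(\gamma+\alpha)}\,t_\alpha(\gamma)\,d\mu_{\widehat G}(\gamma)$ and take $\widehat f=\chi_K$, $\widehat g=\chi_{K+\alpha}$. This gives $\int_K t_\alpha=0$ for every compact $K$ avoiding the null set $B\cup(B-\alpha)$, with no smallness condition needed. Your test function $\chi_U+c\,\chi_{U+\alpha}$ with complex $c$ is exactly this polarization. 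Varying $c$ is indeed necessary when $2\alpha\in U-U$ (e.g.\ $\alpha$ of order two), because then a term $c\int_{(U+\alpha)\cap(U-\alpha)}t_\alpha$ survives.

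Likewise, for $t_0$ you need no Lebesgue differentiation, which is not available on a general LCA group. The identity $\int_K t_0=0$ for all compact $K\subset\widehat G\setminus B$, together with inner regularity of Haar measure, suffices.

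Lastly, your closing remark on Theorem~\ref{thm_new_characterization_orthogonal_frame} is too quick when $\eta\neq\zeta$. The two systems then live on different subgroups, so the paper first conjugates by $D_\eta$ and $D_\zeta$ to return to the common family $(\Gamma_j)$ before invoking this theorem.
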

\noindent
The following result provides a necessary and sufficient condition under which a GTI system satisfying the $1$-UCP becomes a tight frame.
We further exploit this characterization in  Section~\ref{sec 5}, where concrete applications and sampling-theoretic consequences are derived.
\begin{thm}\label{thm_char_Parseval_frames}  
	Let the  GTI system $\gtione$  satisfy the  1-UCP. Then the following assertions are equivalent:
	\begin{itemize}
		\item[(i)] The above GTI system is a tight frame with bound $K$. 
		\item [(ii)] For each $\alpha \in  \union$,
		\begin{align*}
			\displaystyle\sum_{j \in \mathcal{J} : \alpha \in \Gamma^\perp_j} \frac{1}{s(\Gamma_{j})} \int\limits_{P_j}  \overline{\widehat{g_{p}^{(1)}}(\gamma)} \widehat{g_{p}^{(1)}}(\gamma+\alpha)  d\mu_{P_j}(p) =\delta_{\al,0}K \mbox{ for a.e. } \gamma \in \widehat{G}.
		\end{align*}
	\end{itemize}
\end{thm}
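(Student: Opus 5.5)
Our plan is to follow the route of \cite[Theorem 3.12]{FL2019}, transplanted to the GTI setting. For a single system the relevant function is $\wgpone$, and for $f \in \mathcal{D}_B$ we have
\[
\wgpone(0)=\sum_{j\in\J}\int_{P_j}\int_{\Gamma_j}|\langle f,T_\la\gpone\rangle|^2\,d\mu_{\Gamma_j}(\la)\,d\mu_{P_j}(p).
\]
Since $\langle T_xf,T_\la g\rangle=\langle f,T_{\la-x}g\rangle$, the function $\wgpone(x)$ is the same frame sum computed for $T_xf$. Hence tightness with bound $K$ on the dense subspace $\mathcal{D}_B$ is equivalent to $\wgpone\equiv K\norm{f}^2$ on $G$. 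Here we use that $\mathcal{D}_B$ is translation invariant, that $\norm{T_xf}=\norm{f}$, and that tightness on a dense set extends to $L^2(G)$, because the tight inequality on $\mathcal{D}_B$ already gives a Bessel bound.

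The first step is to compute the Fourier coefficients of each $\wgponej$. Using Plancherel, Weil's formula \eqref{eq_weil_integral_formula_2} on $\Gamma_j$, and periodization over $\Gamma_j^\perp$ (the standard computation of \cite{JL2016, KL}), we obtain
\[
\wgponej(x)=\sum_{\alpha\in\Gamma_j^\perp}\frac{1}{s(\Gamma_j)}\int_{\hg}\widehat f(\gamma)\overline{\widehat f(\gamma+\alpha)}\int_{P_j}\overline{\widehat{\gpone}(\gamma)}\widehat{\gpone}(\gamma+\alpha)\,d\mu_{P_j}(p)\,d\gamma\;\alpha(x),
\]
up to sign conventions. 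Because $\widehat f$ is bounded with compact support, this is a finite trigonometric sum, and its coefficient at $\alpha$ is $c_{j,\alpha}=M(\wgponej\,\overline{\alpha})$.

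Next we use the $1$-UCP. Since $\wgpone$ is almost periodic and the series converges unconditionally in $M(|\cdot|)$, and since $|M(h\overline{\alpha})|\le M(|h|)$, the Fourier coefficient of $\wgpone$ at $\alpha$ is the unconditional limit of $\sum_j c_{j,\alpha}$. This limit equals $\int_{\hg}\widehat f(\gamma)\overline{\widehat f(\gamma+\alpha)}\,t_\alpha(\gamma)\,d\gamma$, where $t_\alpha$ is the function in (ii). Exchanging the $j$-sum with the integral requires local integrability of $t_\alpha$, or at least convergence against $\widehat f\,\overline{\widehat f(\cdot+\alpha)}$. We expect this to be the main obstacle, since the $1$-UCP gives only convergence in mean and not absolute convergence of $\sum_j|c_{j,\alpha}|$. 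Our first attempt will mimic \cite[Lemma 3.10 and Theorem 3.11]{FL2019}: test with $f$ whose Fourier support has small diameter and apply a polarization or bilinear version, so that only finitely many relevant terms interact and the coefficient identity becomes $\int \widehat{f_1}\,\overline{\widehat{f_2}(\cdot+\alpha)}\,t_\alpha$ with $t_\alpha\in L^1_{loc}$.

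Finally we prove the equivalence. An almost periodic function is uniquely determined by its Fourier coefficients, so $\wgpone\equiv K\norm{f}^2$ holds iff the coefficient at $0$ equals $K\norm{f}^2=K\int|\widehat f|^2$ and all coefficients at $\alpha\ne0$ vanish. Frequencies outside $\union$ contribute nothing. For (ii)$\Rightarrow$(i) we substitute the identities directly. For (i)$\Rightarrow$(ii) we polarize to obtain
\[
\int\widehat{f_1}\,\overline{\widehat{f_2}(\cdot+\alpha)}\,\bigl(t_\alpha-\delta_{\alpha,0}K\bigr)=0
\]
for all $f_1,f_2\in\mathcal{D}_B$. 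Choosing $\widehat{f_1}=\mathbf 1_E$ and $\widehat{f_2}=\mathbf 1_{E+\alpha}$ for small compact sets $E$ then forces $t_\alpha=\delta_{\alpha,0}K$ almost everywhere.
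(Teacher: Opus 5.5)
Your route is the paper's: compute the Fourier coefficients of $w_{f;g_p^{(1)}}$ under the $1$-UCP, identify the coefficient at $\alpha$ with $\int_{\widehat G}\widehat f(\gamma)\overline{\widehat f(\gamma+\alpha)}\,t_\alpha(\gamma)\,d\mu_{\widehat G}(\gamma)$, and finish with uniqueness of Fourier series of almost periodic functions plus polarization. For (ii)$\Rightarrow$(i) the paper instead invokes Proposition~\ref{prop_Fourier_multiplier} to get a Fourier multiplier with symbol $t_0=K$. Your direct argument that $w_{f;g_p^{(1)}}\equiv K\|f\|^2$ is equivalent, and it has one advantage: it does not need to know beforehand that the system is Bessel, which that proposition assumes.

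\textbf{The gap.} The step you flag (exchanging $\sum_j$ with $\int_{\widehat G}$) is not closed by the workaround you propose. Shrinking the Fourier support of $f$ only restricts which frequencies $\alpha$ occur. It does nothing to the index set $\{j\in\mathcal{J}:\alpha\in\Gamma_j^\perp\}$, which does not depend on $f$. For $\alpha=0$ this set is all of $\mathcal{J}$. For a nested chain as in Section~\ref{sec4}, every $\alpha\in\Gamma_J^\perp$ lies in $\Gamma_j^\perp$ for all $j\le J$. So no choice of test functions makes the relevant $j$-sum finite. Mean convergence from the $1$-UCP alone also does not justify the interchange when $\alpha\neq0$ and the terms are signed.

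\textbf{The missing ingredient} is a pointwise Cauchy--Schwarz bound by the Calder\'on sum, and it is available in both directions:
\begin{itemize}
\item In (i)$\Rightarrow$(ii), tightness gives the Bessel bound $K$, hence $\sum_{j}\frac{1}{s(\Gamma_j)}\int_{P_j}|\widehat{g_p^{(1)}}|^2\,d\mu_{P_j}(p)\le K$ a.e.\ by \cite[Proposition 3.3]{JL2016}.
\item In (ii)$\Rightarrow$(i), hypothesis (ii) at $\alpha=0$ says exactly that this Calder\'on sum equals $K$ a.e.
\end{itemize}
Either way,
\[
\sum_{j:\alpha\in\Gamma_j^\perp}\frac{1}{s(\Gamma_j)}\int_{P_j}\bigl|\widehat{g_p^{(1)}}(\gamma)\bigr|\,\bigl|\widehat{g_p^{(1)}}(\gamma+\alpha)\bigr|\,d\mu_{P_j}(p)\le K \quad\text{for a.e. }\gamma\in\widehat G .
\]
This gives $\|t_\alpha\|_\infty\le K$ and $\sum_j|c_{j,\alpha}|\le K\int|\widehat f(\gamma)|\,|\widehat f(\gamma+\alpha)|\,d\gamma\le K\|f\|^2$. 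Fubini then yields $\sum_j c_{j,\alpha}=\int\widehat f\,\overline{\widehat f(\cdot+\alpha)}\,t_\alpha$; this is exactly the argument of Lemma~\ref{lem_Fourier_cofficents}. With this inserted, the rest of your proposal goes through. That includes the final choice $\widehat{f_1}=\mathbf 1_E$, $\widehat{f_2}=\mathbf 1_{E+\alpha}$, which is now legitimate because $t_\alpha\in L^\infty(\widehat G)$.
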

It is worth mentioning that if $K=1$, the above result provides a necessary and sufficient condition under which a GTI system is a Parseval frame.

Further, it can be observed that Theorem~\ref{thm_char_Parseval_frames} extends \cite[Theorem 3.12]{FL2019}, which provides characterization for Parseval frames in the special case $P_j = \{j\}$ and $\Gamma_j$ taken as lattices.  Moreover, our theorem generalizes \cite[Theorem 3.5]{JL2016}, where a similar Parseval frame characterization was obtained under the  $\alpha$-LIC.

		\subsection{Proof of Theorems~\ref{thm_new_characterization_orthogonal_frame}, \ref{thm_characterization_orthogonal_frame}, and \ref{thm_char_Parseval_frames}}\label{sec_proof_of_characterization_result}
For proving Theorems~\ref{thm_new_characterization_orthogonal_frame}, \ref{thm_characterization_orthogonal_frame}, and \ref{thm_char_Parseval_frames}, we first provide a result,  which identifies a key structural property of the mixed dual Gramian operator. Specifically, it shows that the vanishing of the components \( t_\alpha \), for \( \alpha  \in \umzero \), is equivalent to the mixed dual Gramian operator commuting with translations.  Moreover, it characterizes such operators as Fourier multipliers. 
	
	\begin{prop}\label{prop_Fourier_multiplier}
		Let \( \gtione \) and \( \gtitwo \) be two GTI Bessel systems satisfying the dual \(1\)-UCP, and let \( \Theta \), defined in \eqref{mixed_dual_grammian_operator}, be the mixed dual Gramian operator associated with these systems. Then the following statements are equivalent:
		\begin{itemize}
			\item[(i)] The operator \( \Theta \) commutes with the family of translation operators \( \{T_x\}_{x \in G} \), i.e., \( \Theta T_x = T_x \Theta \) for all \( x \in G \).
			
			\item[(ii)] For every $\alpha \in  \umzero$, the function
			\begin{align*}
				\ta(\gamma)=\dtna =0  \mbox{ for a.e. } \gamma \in \widehat{G}.
			\end{align*}
		\end{itemize}
		Furthermore, if either (i) or (ii) holds, then \( \Theta \) is a Fourier multiplier operator with symbol
		\[
		s(\gamma) = \sum_{j \in \mathcal{J}} \frac{1}{s(\Gamma_j)} \int\limits_{P_j} \overline{\widehat{g_p^{(1)}}(\gamma)}\, \widehat{g_p^{(2)}}(\gamma)\, d\mu_{P_j}(p),
		\]
		that is, \( \widehat{\Theta f}(\gamma) = s(\gamma)\, \widehat{f}(\gamma) \) for all \( f \in L^2(G) \), where the latter series converges absolutely for a.e.\ $\gamma$ and defines an element $s \in L^\infty(\widehat{G})$.
	\end{prop}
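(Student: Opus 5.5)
The plan is to turn the operator statement into a statement about the functions $\w$ and read off their Fourier coefficients. For $f\in\mathcal{D}_B$ and $x\in G$ we have
\[
\langle \Theta T_x f, T_x f\rangle=\sum_{j\in\J}\int_{P_j}\int_{\Gamma_j}\langle T_xf,T_\la g_p^{(1)}\rangle\langle T_\la g_p^{(2)},T_xf\rangle\,d\mu_{\Gamma_j}(\la)\,d\mu_{P_j}(p)=\w(x),
\]
where the interchange of sum and integral is justified by the Bessel property. Next, compute the Fourier coefficients of $\w$ with respect to the mean $M$. Each $\wj$ is $\Gjp$-periodic, bounded and continuous. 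Using Plancherel and Weil's formula \eqref{eq_weil_integral_formula_2}, as in \cite{JL2016,KL}, it has the expansion
\[
\wj(x)=\sum_{\al\in\Gjp}\Big(\frac{1}{s(\Gamma_j)}\int_{\hg}\widehat f(\g)\overline{\widehat f(\g+\al)}\int_{P_j}\overline{\widehat{\gpone}(\g)}\widehat{\gptwo}(\g+\al)\,d\mu_{P_j}(p)\,d\g\Big)\al(x).
\]
Because the series \eqref{eq_1UCP} converges unconditionally in $M(|\cdot|)$, the map $h\mapsto M(h\,\overline{\al})$ is continuous for this convergence. Hence the $\al$-th Fourier coefficient of $\w$ is
\[
c_\al(f)=\int_{\hg}\widehat f(\g)\overline{\widehat f(\g+\al)}\,\ta(\g)\,d\g,\qquad \al\in\union,
\]
and $c_\al(f)=0$ for all other $\al$. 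Interchanging the $j$-sum with the $\g$-integral requires absolute integrability of $\sum_j$ on $\operatorname{supp}\widehat f$. I would get this from Cauchy–Schwarz and the Bessel bounds: each system gives $\sum_j s(\Gamma_j)^{-1}\int_{P_j}|\widehat{g_p^{(i)}}|^2\le B_i$ a.e.

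\textbf{(ii)$\Rightarrow$(i).} If $\ta=0$ for $\al\neq0$, then $\w$ is an almost periodic function whose only possibly nonzero coefficient is $c_0$. By uniqueness of Fourier–Bohr coefficients, $\w$ is constant. So $\langle \Theta T_xf,T_xf\rangle=\langle\Theta f,f\rangle$ for every $f\in\mathcal{D}_B$. Polarization, using the translation invariance of $\mathcal{D}_B$, gives $\langle T_{-x}\Theta T_x f,h\rangle=\langle\Theta f,h\rangle$ for $f,h\in\mathcal{D}_B$. Density of $\mathcal{D}_B$ and boundedness of $\Theta$ then give $\Theta T_x=T_x\Theta$.

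\textbf{(i)$\Rightarrow$(ii).} If $\Theta$ commutes with translations, then $\w$ is constant, so $c_\al(f)=0$ for every $\al\neq0$ and every $f\in\mathcal{D}_B$. Fix $\al\in\umzero$. Polarize in $f$ to get $\int\widehat f_1(\g)\overline{\widehat f_2(\g+\al)}\ta(\g)\,d\g=0$. Taking $\widehat f_1=\chi_K$ and $\widehat f_2=\chi_{K+\al}$ for small compact $K$ avoiding $B$ shows $\int_K\ta=0$ for all such $K$, which yields $\ta=0$ a.e. Note that $\ta\in L^1_{loc}$ by the bound above.

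\textbf{Multiplier statement.} Once $\Theta$ commutes with translations, it is a bounded Fourier multiplier, and its symbol is identified through $c_0$:
\[
\langle\Theta f,f\rangle=\int|\widehat f|^2 t_0 .
\]
By polarization, $\widehat{\Theta f}=t_0\widehat f$ on $\mathcal{D}_B$, hence on all of $L^2(G)$. Absolute convergence of $t_0$ and the bound $|t_0|\le \sqrt{B_1B_2}$ come from the Cauchy–Schwarz estimate already used. So $s=t_0\in L^\infty(\hg)$.

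\textbf{Main obstacle.} I expect the hardest step to be the coefficient identity for $\w$: justifying that the mean of $\w\,\overline\al$ equals the sum over $j$ of the means of $\wj\,\overline\al$, and that the latter equal the integrals above. The first part is exactly where the dual $1$-UCP is used, through $M(|h\overline\al|)=M(|h|)$ and continuity of $M$. For the second part I would adapt \cite[Theorem 3.6]{FL2019} and \cite{LVV} to the GTI setting, first handling the $P_j$-integral with Fubini and the standing assumptions (I)–(III).
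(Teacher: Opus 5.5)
Your proposal is correct and follows essentially the same route as the paper: reduce commutation to constancy of $w_{f;g_p^{(1)},g_p^{(2)}}(x)=\langle \Theta T_xf,T_xf\rangle$, identify its Fourier--Bohr coefficients as $\int_{\widehat{G}}\widehat f(\gamma)\overline{\widehat f(\gamma+\alpha)}\,t_\alpha(\gamma)\,d\mu_{\widehat{G}}(\gamma)$ using the dual $1$-UCP and the Cauchy--Schwarz/Bessel bound (this is the paper's Lemma~\ref{lem_Fourier_cofficents}), apply uniqueness of generalized Fourier series, and read off the symbol $s=t_0$ from the zeroth coefficient. The differences are minor. You extract $t_\alpha=0$ by testing with $\widehat f_1=\chi_K$, $\widehat f_2=\chi_{K+\alpha}$ rather than by the paper's density argument for $M_{\bar t_\alpha}T_{-\alpha}$; for both to lie in $\mathcal{D}_B$, take $K$ compact and avoiding $B\cup(B-\alpha)$. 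You also carry out explicitly the polarization-and-density step showing that constancy of $w$ gives $\Theta T_x=T_x\Theta$, which the paper treats only briefly.
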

	
	 The above proposition may be viewed as a generalization of \cite[Lemma 1]{W2004}, which addresses the classical Euclidean case \( L^2(\mathbb{R}^d) \), and \cite[Proposition 3.7]{GS2018}, which is formulated for LCA groups. The following lemma will be used in the proof of Proposition~\ref{prop_Fourier_multiplier}.

	\begin{lem}\label{lem_Fourier_cofficents}
		Suppose the  GTI systems $\gtione$ and $\gtitwo$ satisfy the assumptions of Proposition~\ref{prop_Fourier_multiplier}. Then the operator $M_{\bar{t}_\alpha}: L^2(\widehat{G}) \to L^2(\widehat{G})$, which maps any $f$ to the product $f \cdot \bar{t}_\alpha$, is well-defined and bounded.  Moreover, $\forall$ $f \in \mathcal{D}_B$, the following identity holds:
		\begin{equation}\label{eq_using_t_alpha_0}
			\sum\limits_{j \in \mathcal{J}: \alpha \in \Gamma_j^\perp} d_{P_j,\alpha}=\left\langle \widehat{f}, M_{\bar{t}_\alpha} T_{-\alpha}\widehat{f} \right\rangle,
		\end{equation}
		where 
		\begin{equation}\label{eq_def_dpjalpha}
			d_{P_j, \alpha}=\frac{1}{s(\Gamma_{j})} \displaystyle\int\limits_{P_j} \displaystyle\int\limits_{\widehat{G}}\widehat{ f}(w) \overline{\widehat{f}(w+\alpha)} \overline{\widehat{g_p^{(1)}}(w)} \widehat{g_p^{(2)}}(w+\alpha)\ d\mu_{\widehat{G}}(w) d\mu_{P_j}(p).
		\end{equation}
	\end{lem}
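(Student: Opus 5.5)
We need to prove Lemma \ref{lem_Fourier_cofficents}: boundedness of $M_{\bar t_\alpha}$ and the identity \eqref{eq_using_t_alpha_0}.

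The boundedness amounts to showing $t_\alpha\in L^\infty(\widehat G)$. For each $j$ and $p$, Cauchy--Schwarz in $p$ and then in $j$ gives
\[
|t_\alpha(\gamma)|\le \Big(\sum_{j}\frac{1}{s(\Gamma_j)}\int_{P_j}|\widehat{g_p^{(1)}}(\gamma)|^2\,d\mu_{P_j}(p)\Big)^{1/2}\Big(\sum_{j}\frac{1}{s(\Gamma_j)}\int_{P_j}|\widehat{g_p^{(2)}}(\gamma+\alpha)|^2\,d\mu_{P_j}(p)\Big)^{1/2}.
\]
Here the sums may be enlarged to all $j\in\J$, since every term is nonnegative. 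The two factors are the Calder\'on sums of the two systems, evaluated at $\gamma$ and at $\gamma+\alpha$. I will use the known fact (from \cite{JL2016} and \cite{FL2019}, valid without any LIC/UCP assumption) that a GTI Bessel system with bound $B_i$ has Calder\'on sum bounded by $B_i$ a.e. This yields $\|t_\alpha\|_\infty\le\sqrt{B_1B_2}$, so $M_{\bar t_\alpha}$ is bounded with norm at most $\sqrt{B_1B_2}$. The same estimate shows that the series defining $t_\alpha$ converges absolutely a.e., so $t_\alpha$ is well defined. Measurability follows from the standing assumptions (I)--(III) via Tonelli/Fubini.

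For the identity, fix $f\in\mathcal D_B$ and write out the right-hand side:
\[
\langle \widehat f, M_{\bar t_\alpha}T_{-\alpha}\widehat f\rangle=\int_{\widehat G}\widehat f(w)\,t_\alpha(w)\,\overline{\widehat f(w+\alpha)}\,d\mu_{\widehat G}(w),
\]
where $T_{-\alpha}\widehat f(w)=\widehat f(w+\alpha)$. Inserting the definition of $t_\alpha$ gives an integral over $\widehat G$ of a sum over $\{j:\alpha\in\Gamma_j^\perp\}$ of integrals over $P_j$. The goal is to interchange $\int_{\widehat G}$, $\sum_j$ and $\int_{P_j}$, which turns the expression term by term into $\sum_j d_{P_j,\alpha}$. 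To justify this by Fubini--Tonelli, I bound the absolute integrand using the pointwise Cauchy--Schwarz estimate above. This gives the bound
\[
\sqrt{B_1B_2}\int_{\widehat G}|\widehat f(w)|\,|\widehat f(w+\alpha)|\,d\mu_{\widehat G}(w)\le \sqrt{B_1B_2}\,\|f\|_2^2<\infty.
\]
With absolute integrability established, the interchange is legitimate. It also shows that each $d_{P_j,\alpha}$ is finite and that $\sum_j d_{P_j,\alpha}$ converges absolutely, so the order of summation over the countable set $\J$ is irrelevant.

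The main obstacle is the a.e. bound on the Calder\'on sum, together with the accompanying measurability of $(p,\gamma)\mapsto\widehat{g_p^{(i)}}(\gamma)$. If that bound is not available as a citation, I would derive it directly. One tests the Bessel inequality on $T_x f$ with $\widehat f$ supported in a small neighbourhood of a point. Averaging over $x$ in $G/\Gamma_j$ and using Weil's formula \eqref{eq_Weil's_integration_formula} together with Plancherel on $\Gamma_j$ gives
\[
\int_{\widehat G}|\widehat f|^2\sum_j\frac{1}{s(\Gamma_j)}\int_{P_j}|\widehat{g_p^{(i)}}|^2\,d\mu_{P_j}(p)\le B_i\|f\|^2.
\]
Letting the support shrink and applying Lebesgue differentiation then yields the pointwise bound.

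Note that neither the dual $1$-UCP nor the restriction to $f\in\mathcal D_B$ is essential to the identity itself. These hypotheses matter only later in the paper, when the left-hand side is interpreted as a Fourier coefficient of $w_{f;g_p^{(1)},g_p^{(2)}}$.
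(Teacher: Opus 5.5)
Your proposal is correct and follows the paper's proof. You use Cauchy--Schwarz together with the Calder\'on-sum bound for Bessel systems from \cite[Proposition 3.3]{JL2016} to get $\|t_\alpha\|_\infty\le\sqrt{B_1B_2}$, then expand $\langle \widehat f, M_{\bar t_\alpha}T_{-\alpha}\widehat f\rangle$ and interchange $\int_{\widehat G}$, $\sum_j$ and $\int_{P_j}$ by Fubini. Your explicit domination $\sqrt{B_1B_2}\int_{\widehat G}|\widehat f(w)|\,|\widehat f(w+\alpha)|\,d\mu_{\widehat G}(w)\le\sqrt{B_1B_2}\|f\|^2$ also supplies the justification of \eqref{eq_finite}, which the paper only asserts.
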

	\bp
		We have
		$$\ta(\gamma)=\displaystyle\sum_{j \in \mathcal{J} : \alpha \in \Gamma^\perp_j} \frac{1}{s(\Gamma_{j})} \int\limits_{P_j}  \overline{\widehat{g_{p}^{(1)}}(\gamma)} \widehat{g_{p}^{(2)}}(\gamma+\alpha)  d\mu_{P_j}(p).$$
		Note that the right-hand side converges absolutely, as seen from the following chain of inequalities:
		\begingroup
		\allowdisplaybreaks
		\begin{align}
			\displaystyle\sum_{j \in \mathcal{J} : \alpha \in \Gamma^\perp_j}& \frac{1}{s(\Gamma_{j})} \int\limits_{P_j}  \left|\overline{\widehat{g_{p}^{(1)}}(\gamma)} \widehat{g_{p}^{(2)}}(\gamma+\alpha) \right| d\mu_{P_j}(p) \leq \displaystyle\sum_{j \in \mathcal{J} } \frac{1}{s(\Gamma_{j})} \int\limits_{P_j}  \left|\overline{\widehat{g_{p}^{(1)}}(\gamma)} \widehat{g_{p}^{(2)}}(\gamma+\alpha) \right| d\mu_{P_j}(p) \nonumber\\
			&\leq\left(\displaystyle\sum_{j \in \mathcal{J}} \frac{1}{s(\Gamma_{j})} \int\limits_{P_j}  \left|\overline{\widehat{g_{p}^{(1)}}(\gamma)}  \right|^2 d\mu_{P_j}(p)\right)^{1/2} \left(\displaystyle\sum_{j \in \mathcal{J}} \frac{1}{s(\Gamma_{j})} \int\limits_{P_j}  \left|\overline{\widehat{g_{p}^{(2)}}(\gamma+\alpha)}  \right|^2 d\mu_{P_j}(p)\right)^{1/2} \nonumber\\ &\leq B_{g^{(1)}}^{1/2} B_{g^{(2)}}^{1/2},  \nonumber
		\end{align}
		\endgroup
		where the last inequality follows from \cite[Proposition 3.3]{JL2016}, and \( B_{g^{(1)}} \) and \( B_{g^{(2)}} \) are the Bessel constants associated with \( \gtione \) and \( \gtitwo \), respectively. Therefore, $\|t_\alpha\|_\infty \leq B_{g^{(1)}}^{1/2} B_{g^{(2)}}^{1/2}$, so $t_\alpha \in L^\infty(\widehat{G})$. Hence, the multiplication operator 
$M_{\bar{t_\alpha}} : L^2(\widehat{G}) \to L^2(\widehat{G})$, defined by 
$f \mapsto f.\bar{t}_\alpha$, 
is well-defined and bounded on $L^2(\widehat{G})$.  Also note that
		 for $f \in \mathcal{D}_B$,   
		\begin{align}
			\sum_{j \in \J: \alpha \in \Gjp} \int\limits_{\hg}\left|\widehat{f}(w) \overline{\widehat{f}(w+\alpha)} \frac{1}{s(\Gj)} \int\limits_{P_j} \overline{\widehat{\gpone}(w)} \widehat{\gptwo}(w+\alpha) d\mu_{P_j}(p)\right| d\mu_{\hg}(w) < \infty. \label{eq_finite}
		\end{align}
	 Now, for $f \in \mathcal{D}_B$, we compute:
		\begin{align*}
			\langle \widehat{f}, M_{\bar{t}_\alpha} T_{-\alpha}\widehat{f} \rangle&=\int\limits_{\widehat{G}} \widehat{f}(w) \overline{M_{\bar{t}_\alpha} T_{-\alpha}\widehat{f}(w)} \ d \mu_{\widehat{G}}(w)=\int\limits_{\widehat{G}} \widehat{f}(w) t_\alpha(w) \overline{\widehat{f}(w+\alpha)} \ d \mu_{\widehat{G}}(w)\\
			&=\int\limits_{\widehat{G}} \widehat{f}(w)  \overline{\widehat{f}(w+\alpha)} \displaystyle\sum_{j \in \mathcal{J} : \alpha \in \Gamma^\perp_j} \frac{1}{s(\Gamma_{j})} \int\limits_{P_j}  \overline{\widehat{g_{p}^{(1)}}(w)} \widehat{g_{p}^{(2)}}(w+\alpha)  d\mu_{P_j}(p) \ d \mu_{\widehat{G}}(w).
		\end{align*} 
		Applying Fubini’s theorem, we obtain:
		\begin{align*}
			\langle \widehat{f}, M_{\bar{t}_\alpha T_{-\alpha}}\widehat{f} \rangle&=\displaystyle\sum_{j \in \J : \alpha \in \Gamma^\perp_j} \frac{1}{s(\Gamma_{j})}\int\limits_{P_j} \int\limits_{\widehat{G}} \widehat{f}(w)  \overline{\widehat{f}(w+\alpha)}  \, \overline{\widehat{g_{p}^{(1)}}(w)} \widehat{g_{p}^{(2)}}(w+\alpha)\ d \mu_{\widehat{G}}(w)  d\mu_{P_j}(p) \\
			&= \sum_{j \in \J:\alpha \in \Gamma_{j}^\perp} d_{P_j, \alpha},
		\end{align*}
		which completes the proof.
	\ep
	With the help of the previous lemma, we now proceed to prove Proposition~\ref{prop_Fourier_multiplier}.

       \noindent{\bf Proof of Proposition~\ref{prop_Fourier_multiplier}}
		It is easy to observe that \( \Theta \) commutes with the family  \( \{T_x\}_{x \in G} \) if and only if \( \w \) is constant for all \( f \in \mathcal{D}_B \). Clearly, if $\Theta$ commutes with translations \( \{T_x\}_{x \in G} \), then \( \w \) is constant for all \( f \in \mathcal{D}_B \). Conversely, under the dual $1$-UCP, constancy of \( \w \) implies the vanishing of all non-zero Fourier coefficients; hence $\Theta$ commutes with translations. Next, we show that \( \w \) is constant if and only if, for each $\alpha \in  \umzero$, we have
		\begin{align*}
			\ta(\gamma)=	\dtna =0 \quad \mbox{for a.e. } \gamma \in \widehat{G}.
		\end{align*}
		Since the GTI systems under consideration satisfy the dual \(1\)-UCP, we follow an approach similar to that used in \cite[Proposition 3.10]{FL2019}. For \(x \in G\), the function \(\w(x)\) can be expressed as follows:
		\begin{align*}
			\w(x)=\sum_{\alpha \in \union} \alpha(x) \widehat{\w}(\alpha),
		\end{align*}
		where for $\alpha \in \union$, 
		\begin{align}
			\widehat{\w}(\alpha)&=\displaystyle\sum\limits_{j \in \mathcal{J}:\al \in \Gjp} d_{P_j, \alpha}, \label{eq_prop_1}
		\end{align}  
		and $d_{P_j, \alpha}$ is given in \eqref{eq_def_dpjalpha}.	Now for $x \in G$, we consider the function
		\[
		z_{f,g_p^{(1)},g_p^{(2)}}(x)=\w(x)-\w(0),
		\]
		which is continuous due to the continuity of \( \w \). The generalized Fourier series of \( z_{f, g^{(1)}_p, g^{(2)}_p}(x) \) is given by
		\[
		z_{f, g^{(1)}_p, g^{(2)}_p}(x) = \sum_{\alpha \in \union} \alpha(x)\, \widehat{z_{f, g^{(1)}_p, g^{(2)}_p}}(\alpha),
		\]
		with generalized Fourier coefficients
		\begin{equation} \label{eq_prop_2}
			\widehat{z_{f;g_p^{(1)},g_p^{(2)}}}(\alpha)	=	\begin{cases}
				\widehat{\w}(\alpha)-\w(0) &\mbox{if } \alpha =0,\\
				\widehat{\w}(\alpha) &\mbox{if } \alpha \neq 0. 
			\end{cases} 
		\end{equation}
		The function \( \w \) is constant if and only if \( z_{f, g^{(1)}_p, g^{(2)}_p}(x) \equiv 0 \). In view of the uniqueness theorem of generalized Fourier series \cite[Theorem~7.12]{Corduneanu1968}, this holds if and only if for all $\alpha \in \union$, we have
		\[
		\widehat{z_{f, g^{(1)}_p, g^{(2)}_p}}(\alpha) = 0.
		\]
	This is equivalent to stating that, for all $\alpha \in \union$,
		\begin{equation} \label{eq_prop_*}
			\widehat{\w}(\alpha) = \delta_{\alpha, 0} \w(0).
		\end{equation}
		To prove the forward implication, assume that $\w$ is constant. Then, by \eqref{eq_prop_*}, for all $\alpha \in \umzero$, we have
		\begin{equation} \label{eq_prop_3}
			\widehat{\w}(\alpha) = 0.  
		\end{equation}
		Next, by Lemma~\ref{lem_Fourier_cofficents}, for \( f \in \mathcal{D}_B \), we have
		\[
		\sum_{j \in \mathcal{J} : \alpha \in \Gamma_j^\perp} d_{P_j, \alpha} = \left\langle \widehat{f}, \, M_{\bar{t}_\alpha} T_{-\alpha} \widehat{f} \right\rangle,
		\]
		and by \eqref{eq_prop_1}, it follows that
		\begin{equation} \label{eq_prop**}
			\left\langle \widehat{f}, \, M_{\bar{t}_\alpha} T_{-\alpha} \widehat{f} \right\rangle = \widehat{\w}(\alpha).
		\end{equation}
		Therefore, using \eqref{eq_prop_3}, we obtain
		\[
		\left\langle \widehat{f}, \, M_{\bar{t}_\alpha} T_{-\alpha} \widehat{f} \right\rangle = 0 \quad \text{for all } \alpha \in \umzero.
		\]
        By the polarization identity, this implies for all \( \alpha \in \umzero \),
\[
\langle \widehat{f}, M_{\bar{t}_\alpha} T_{-\alpha} \widehat{g} \rangle = 0
\quad \text{for all } \widehat{f}, \widehat{g} \in \mathcal{D}_B,
\]
and hence for all $\widehat{f}, \widehat{g} \in L^2(\widehat{G})$ as \( \mathcal{D}_B \) is dense in \( L^2(\widehat{G}) \). Therefore,
		\[
		M_{\bar{t}_\alpha} T_{-\alpha} = 0, \quad \forall \; \alpha \in \umzero.
		\]
		Hence for all \( \alpha \in \umzero \), we have
		\[
		M_{\bar{t}_\alpha} T_{-\alpha} \widehat{g}(\g) = \bar{t}_\alpha(\g) \widehat{g}(\g+\alpha) = 0 \quad \text{a.e. } \g \in \widehat{G}, \, \forall \, \widehat{g} \in L^2(\widehat{G}),
		\]
		which implies \( t_\alpha = 0 \) for all \( \alpha \in \umzero \).
		Conversely, we assume \( t_\alpha = 0 \) for all \( \alpha \in \umzero \). Then,
		\[
		\left\langle \widehat{f}, \, M_{\bar{t}_\alpha} T_{-\alpha} \widehat{f} \right\rangle = 0 \quad \forall f \in \mathcal{D}_B,
		\]
		and by \eqref{eq_prop**}, it follows that \( \widehat{\w}(\alpha) = 0 \) for \( \alpha \in \umzero \). Moreover, from \eqref{eq_prop_2}, we have  $\widehat{\w}(0)=\w(0)$, so
		\[
		\widehat{\w}(\alpha)=\delta_{\alpha,0} \w(0) \quad \text{for all } \alpha \in \union.
		\]
		Hence, by \eqref{eq_prop_*}, we conclude that \( \w \) is constant. 
		Without loss of generality, assume that (i) holds. It is a standard fact that whenever the operator $\Theta$ commutes with the translation operators $\{T_x\}_{x \in G}$, the operator $\Theta$ must act as a Fourier multiplier on LCA groups; see \cite[Theorem~4.1.1]{LarsenR}. That is  \( \widehat{\Theta f}(w) = s(w)\, \widehat{f}(w) \),  where  $s \in L^\infty(\widehat{G})$  is unique. 
		Now, using the definition of \( \w \), we have
		\begin{align} 
			\w(0) &= \left\langle \Theta f, f \right\rangle = \left\langle \widehat{\Theta f}, \widehat{f} \right\rangle \nonumber\\ &= \int\limits_{\widehat{G}} \widehat{\Theta f}(w)\, \overline{\widehat{f}(w)} \, d\mu_{\widehat{G}}(w)
			\nonumber\\ 		
			&= \int\limits_{\widehat{G}} s(w)\, \widehat{f}(w)\, \overline{\widehat{f}(w)} \, d\mu_{\widehat{G}}(w).\label{eq_prop_4}
		\end{align}
		Here, the last step follows from the Fourier multiplier representation of \( \Theta \). Next, using equation \eqref{eq_prop_1} and simplifying, we obtain
		\begingroup
		\allowdisplaybreaks
		\begin{align}
			\widehat{\w}(\alpha)
			&= \sum_{j \in \mathcal{J}: \alpha \in \Gamma_j^\perp} d_{P_j, \alpha} \nonumber  \\
			&= \sum_{j \in \mathcal{J}: \alpha \in \Gamma_j^\perp} \frac{1}{s(\Gamma_j)} \int\limits_{P_j} \int\limits_{\widehat{G}} 
			\widehat{f}(w)\, \overline{\widehat{f}(w+\alpha)}\, \overline{\widehat{g_p^{(1)}}(w)}\, \widehat{g_p^{(2)}}(w+\alpha)\,
			d\mu_{\widehat{G}}(w) \, d\mu_{P_j}(p) \nonumber \\
			&= \sum_{j \in \mathcal{J}: \alpha \in \Gamma_j^\perp} \int\limits_{\widehat{G}} \widehat{f}(w)\, \overline{\widehat{f}(w+\alpha)} 
			\left( \frac{1}{s(\Gamma_j)} \int\limits_{P_j} \overline{\widehat{g_p^{(1)}}(w)}\, \widehat{g_p^{(2)}}(w+\alpha) \, d\mu_{P_j}(p) \right)
			d\mu_{\widehat{G}}(w) \nonumber \\
			&=  \int\limits_{\widehat{G}} \widehat{f}(w)\, \overline{\widehat{f}(w+\alpha)} 
			\left(\sum_{j \in \mathcal{J}: \alpha \in \Gamma_j^\perp} \frac{1}{s(\Gamma_j)} \int\limits_{P_j} \overline{\widehat{g_p^{(1)}}(w)}\, \widehat{g_p^{(2)}}(w+\alpha) \, d\mu_{P_j}(p) \right)
			d\mu_{\widehat{G}}(w). \label{eq_prop_5}
		\end{align}
		\endgroup
		Also, using equation \eqref{eq_prop_*}, and applying \eqref{eq_prop_1} for \( \alpha = 0 \), we get
		\[
		\w(0)=\widehat{\w(0)}=\displaystyle\sum\limits_{j \in \J} d_{\pj,0}.
		\]
		By setting \( \alpha = 0 \) in equation \eqref{eq_prop_5} and substituting into the expression above, we obtain:
		\begin{align}
			\w(0) 
			&= \int\limits_{\widehat{G}} \left( \sum_{j \in \mathcal{J}} \frac{1}{s(\Gamma_j)} \int\limits_{P_j} 
			\overline{\widehat{g_p^{(1)}}(w)}\, \widehat{g_p^{(2)}}(w) \, d\mu_{P_j}(p) \right)
			\widehat{f}(w)\, \overline{\widehat{f}(w)} \, d\mu_{\widehat{G}}(w). \label{eq_prop_6}
		\end{align}
		Comparing \eqref{eq_prop_4} and \eqref{eq_prop_6}, we conclude that
		\[
		s(w) = \sum_{j \in \mathcal{J}} \frac{1}{s(\Gamma_j)} 
		\int\limits_{P_j} \overline{\widehat{g_p^{(1)}}(w)}\, \widehat{g_p^{(2)}}(w)\, d\mu_{P_j}(p).
		\]
		This completes the proof.		
	\ep
    \noindent{\bf Proof of Theorem \ref{thm_characterization_orthogonal_frame}.}
		\textbf{(i) $\implies$ (ii):}
Let $\Theta$ denote the mixed dual Gramian operator associated with given systems. 
Since $\ta(\gamma)=0$ for every $\alpha \in \umzero$, Proposition~\ref{prop_Fourier_multiplier} implies that $\Theta$ acts as a Fourier multiplier with symbol $s$, that is, \( \widehat{\Theta f}(w) = s(w)\, \widehat{f}(w) \). Moreover, 
		\[
		s(w) = \sum_{j \in \mathcal{J}} \frac{1}{s(\Gamma_j)} 
		\int\limits_{P_j} \overline{\widehat{g_p^{(1)}}(w)}\, \widehat{g_p^{(2)}}(w)\, d\mu_{P_j}(p)= t_0(w)=0
		\]
		for almost every \( w \in \widehat{G} \). Thus,	 $\widehat{\Theta f}=0$, which implies $\Th f=0$ for all $f \in \mathcal{D}_B$.  Therefore, $\Th=0$  and hence the GTI systems $\gtione$ and $\gtitwo$ are pairwise orthogonal Bessel families (and frames). This proves (ii).
		
		\medskip
		\noindent
		\textbf{(ii) $\implies$ (i):} Assume that the GTI systems $\gtione$ and $\gtitwo$ are pairwise orthogonal frames, i.e., $\Theta=0$. Then $\Th T_x f=0$ for all $x \in G$ and $f \in \mathcal{D}_B$. By definition of $\w$, we have 
		\begin{align*}
			\w(x)=\left\langle \Theta T_x f, T_x f \right \rangle \quad \mbox{for } f \in \mathcal{D}_B, \, x \in G.
		\end{align*}
		Therefore, $\w\equiv 0$.
		Applying the uniqueness theorem for generalized Fourier series, we conclude that $\widehat{\w}(\al)=0$, for all $\al \in \union$.
		Using \eqref{eq_prop_1} and Lemma \ref{lem_Fourier_cofficents}, for all \( \al \in \union \), we obtain
		\begin{equation}
			\widehat{\w}(\al)=\displaystyle\sum_{j \in \J: \al \in \Gjp} d_{j,\al}=\left \langle \widehat{f}, M_{\bar{t}_\al }T_{-\al}\widehat{f} \right \rangle=0, 
			\	\end{equation}
		$\forall$ \( f \in \mathcal{D}_B \). We have 
\(
M_{\bar{t}_\alpha} T_{-\alpha} \widehat{f} = 0 \quad \text{for all } \widehat{f} \in L^2(\widehat{G}),
\)
because $\mathcal{D}_B$ is dense in $L^2(\widehat{G})$. 	Hence,
		\[
		M_{\bar{t}_\alpha} T_{-\alpha} \widehat{g}(\g) = \bar{t}_\alpha(\g) \widehat{g}(\g+\alpha) = 0 \quad \text{for a.e. } \g \in \widehat{G}, \, \forall \, \widehat{g} \in L^2(\widehat{G})
		\]
		which implies \( t_\alpha = 0 \) for all \( \alpha \in \union \). This proves (i).
	\ep

	Following Remark \ref{rem_relation_between_UCP_LIC}, if the so-called dual $\alpha$-LIC holds for the two GTI systems, then these systems satisfy the dual $\infty$-UCP. Consequently, \cite[Theorem 3.5]{GS2018} becomes a corollary of Theorem~\ref{thm_characterization_orthogonal_frame}, as stated below:
	
	\begin{cor}
		Let $\gtione$ and $\gtitwo$ be two GTI Bessel (respectively, frame) systems in $L^2(G)$ satisfying the dual $\alpha$-LIC. Then the following statements are equivalent:
		\begin{itemize}
			\item[(i)] The above GTI Bessel (respectively, frame)  systems in $L^2(G)$ are pairwise orthogonal. 
			\item[(ii)] For each $\alpha \in \umzero,$ we have
			\begin{equation*}
				t_\alpha(\gamma):=	\displaystyle\sum_{j \in \mathcal{J} : \alpha \in \Gamma^\perp_j} \frac{1}{s(\Gamma_{j})} \int\limits_{P_j}  \overline{\widehat{g_{p}^{(1)}}(\gamma)} \widehat{g_{p}^{(2)}}(\gamma+\alpha)  d\mu_{P_j}(p) =0  \mbox{ for a.e. } \gamma \in \widehat{G},
			\end{equation*}
			and 
			\begin{equation*}
				t_0(\gamma):=	\displaystyle\sum_{j \in \mathcal{J} } \frac{1}{s(\Gamma_{j})} \int\limits_{P_j}  \overline{\widehat{g_{p}^{(1)}}(\gamma)} \widehat{g_{p}^{(2)}}(\gamma)  d\mu_{P_j}(p) =0  \mbox{ for a.e. } \gamma \in \widehat{G}.
			\end{equation*}
		\end{itemize}
	\end{cor}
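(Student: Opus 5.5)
The corollary follows from Theorem~\ref{thm_characterization_orthogonal_frame} once we know that its hypothesis holds. So the plan is to verify the dual $1$-UCP for the two systems and then quote that theorem verbatim. The Bessel (respectively, frame) assumption is the same in both statements, and the conditions on $t_\alpha$ for $\alpha\in\umzero$ and on $t_0$ are literally identical to assertion (i) of Theorem~\ref{thm_characterization_orthogonal_frame}. Only the order of the two assertions is reversed. Nothing about the Fourier coefficients needs to be recomputed.

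\textbf{Step 1: dual $\alpha$-LIC gives the dual $1$-UCP.} By Remark~\ref{rem_relation_between_UCP_LIC}(i), the dual $\alpha$-LIC yields the dual $\infty$-UCP, that is, the series $\sum_{j\in\J}\wj$ converges uniformly on $G$ for every $f\in\mathcal{D}_B$ with a suitable blind spot $B$. Uniform convergence of continuous $\Gjp$-periodic trigonometric polynomials makes the limit $\w$ almost periodic. Since the mean satisfies $M(|h|)\le\|h\|_\infty$, uniform convergence implies convergence with respect to $M(|\cdot|)$. Uniform convergence along the directed set of finite subsets of $\J$ gives exactly the unconditional form demanded in Definition~\ref{defn_UCP}(i). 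Hence the dual $1$-UCP holds.

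\textbf{Step 2: invoke the theorem.} With the dual $1$-UCP in hand, Theorem~\ref{thm_characterization_orthogonal_frame} applies to the pair $\gtione$, $\gtitwo$. It states that vanishing of all $t_\alpha$, $\alpha\in\umzero$, together with $t_0=0$ a.e., is equivalent to pairwise orthogonality, i.e.\ $\Theta=0$. This is precisely (ii)$\Leftrightarrow$(i) of the corollary.

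The only point needing care, and the expected obstacle, is Step 1. Remark~\ref{rem_relation_between_UCP_LIC}(i) is stated via an adaptation of \cite[Remark 6]{FL2019}, so one must check that the GTI dual $\alpha$-LIC from \cite{GS2018} does dominate $\sum_j|\wj|$ uniformly on $G$ for $f\in\mathcal{D}_B$. The approach is to expand each $\wj$ in its Fourier series over $\Gjp$ as in the proof of Proposition~\ref{prop_Fourier_multiplier}. The coefficients are the $d_{P_j,\alpha}$ from \eqref{eq_def_dpjalpha}. One then bounds $\sum_j\sum_{\alpha\in\Gjp}|d_{P_j,\alpha}|$ by the dual $\alpha$-LIC integral, which is finite for $f$ with compactly supported bounded Fourier transform. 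This gives absolute and uniform convergence by the Weierstrass M-test. Assuming the remark as stated in the paper, this step is immediate and the corollary follows.
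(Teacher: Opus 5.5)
Your proposal is correct and follows the paper's route. The paper also derives this corollary by using Remark~\ref{rem_relation_between_UCP_LIC}(i) to pass from the dual $\alpha$-LIC to the dual $\infty$-UCP, hence to the dual $1$-UCP, and then reads off the equivalence from Theorem~\ref{thm_characterization_orthogonal_frame} with the two assertions in reverse order; your extra check that uniform convergence gives convergence with respect to $M(|\cdot|)$ fills in a step the paper leaves implicit.
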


  Next,  we define the dilation operator, for $\eta \in \operatorname{Aut}(G)$, by
\[
(D_\eta f)(x) = \Delta(\eta)^{-1/2} f(\eta x),
\]
where $\Delta(\eta)$ is determined by
\[
\mu_G(\eta(E)) = \Delta(\eta)\,\mu_G(E).
\]
Then $D_\eta$ is unitary on $L^2(G)$ and
\[
\widehat{D_\eta f}(\gamma) = \Delta(\eta)^{1/2}\widehat{f}(\eta^\ast(\gamma)).
\]    
With these preparations in place, we turn to the proof of Theorem~\ref{thm_new_characterization_orthogonal_frame}.
	
	\noindent{\bf Proof of Theorem \ref{thm_new_characterization_orthogonal_frame}}
	Let $D_{\e}$ and $D_{\z}$ be the (unitary) dilation operators associated with ${\e}$ and $\z$, respectively. By polarisation identity and density argument, $\newgtione$ and $\newgtitwo$ are pairwise orthogonal if and only if for every  $f \in \mathcal{D}_B$, we have
	\begin{align*}
		\langle D_\z \Theta_{\e, \z} D_\e^{-1}f, f \rangle =0,
	\end{align*}
	as $\mathcal{D}_B$ is dense in $\ltg$.
We may reformulate the left-hand side of the above expression as
	\begin{align}
		\langle D_\z  \Theta_{\e, \z} D_\e^{-1}f, f \rangle 
		&=	\langle \Theta_{\e, \z} D_\e^{-1}f, D_\z^{-1} f \rangle  \nonumber\\
		&=	\sum_{j \in \mathcal{J}} \int\limits_{P_j} \int\limits_{\Gamma_j} 
		\langle D_\e^{-1}f, T_{\eta \la} g^{(1)}_p \rangle \,  \langle T_{\z \la} g^{(2)}_p, D_\z^{-1} f \rangle  \,	d\mu_{\Gamma_j}(\la)\, d\mu_{P_j}(p) \nonumber\\
		&=\sum_{j \in \mathcal{J}} \int\limits_{P_j} \int\limits_{\Gamma_j} 
		\langle f, D_\e T_{\eta \la} g^{(1)}_p \rangle \,  \langle D_\z T_{\z \la} g^{(2)}_p, f \rangle  \,	d\mu_{\Gamma_j}(\la)\, d\mu_{P_j}(p). \nonumber
	\end{align}
	Using the intertwining relation
	\(	D_{\e} T_\la = T_{\e^{-1}(\la)} D_{\e}  \) for \( \la \in \Gamma_j,\) the above expression becomes
	\begin{align*}
		\langle D_\z  \Theta_{\e, \z} D_\e^{-1}f, f \rangle &=\sum_{j \in \mathcal{J}} \int\limits_{P_j} \int\limits_{\Gamma_j} 
		\langle f,  T_{\la} D_\e g^{(1)}_p \rangle \,  \langle  T_{\la} D_\z g^{(2)}_p, f \rangle  \,	d\mu_{\Gamma_j}(\la)\, d\mu_{P_j}(p). \nonumber
	\end{align*}
	Thus $\newgtione$ and $\newgtitwo$ are pairwise orthogonal if and only if the systems $\displaystyle\bigcup\limits_{j \in \mathcal{J}} \{T_{\la} D_\e g^{(1)}_p \}_{\g \in \Gamma_j}$ and $\displaystyle\bigcup\limits_{j \in \mathcal{J}} \{T_{\la} D_\z g^{(2)}_p \}_{\la \in \Gamma_j}$  are pairwise orthogonal.  Next, we prove that the GTI systems $\displaystyle\bigcup\limits_{j \in \mathcal{J}} \{T_{\la} D_\e g^{(1)}_p \}_{\g \in \Gamma_j}$ and $\displaystyle\bigcup\limits_{j \in \mathcal{J}} \{T_{\la} D_\z g^{(2)}_p \}_{\la \in \Gamma_j}$ satisfy dual $1$-UCP. By hypothesis, without loss of generality, we assume that $\newgtione$ satisfies $1$-UCP. Thus,  for each $f \in \mathcal{D}_{B}$, $\wgpone$ is almost periodic and 
	$$\wgpone=\sum_{j \in \mathcal{J}}\wgponej$$
	converges with respect to $M$. 
	
	Now for $f \in \mathcal{D}_B$ and $x \in G$, we have
	\begin{align*}
		w_{D_\e^{-1}f,\gpone,j}(x)&=\int\limits_{P_j}\int\limits_{\Gamma_{j}} \left| \langle T_x D_\e^{-1} f, T_{\e\la} \gpone \rangle \right|^2 \ d\mu_{\Gamma_{j}}(\la) d\mu_{P_j}(p)\\
		&=\int\limits_{P_j}\int\limits_{\Gamma_{j}} \left| \langle D_\e^{-1}T_{\e^{-1} x} f,  T_{\e\la} \gpone \rangle \right|^2 \ d\mu_{\Gamma_{j}}(\la) d\mu_{P_j}(p)\\
		&=\int\limits_{P_j}\int\limits_{\Gamma_{j}} \left| \langle T_{\e^{-1} x} f, D_\e T_{\e\la} \gpone \rangle \right|^2 \ d\mu_{\Gamma_{j}}(\la) d\mu_{P_j}(p)\\
		&=\int\limits_{P_j}\int\limits_{\Gamma_{j}} \left| \langle T_{\e^{-1} x} f,  T_{\la} D_\e\gpone \rangle \right|^2 \ d\mu_{\Gamma_{j}}(\la) d\mu_{P_j}(p).
	\end{align*}
	Thus for $g:=D_\e^{-1}f$ and $y:=\eta^{-1}x$, we have 
	\begin{align}\label{eq_wggpone_new_expression}
		w_{g,\gpone,j}(x)&=\int\limits_{P_j}\int\limits_{\Gamma_{j}} \left| \langle T_{y} g,  T_{\la} D_\e\gpone \rangle \right|^2 \ d\mu_{\Gamma_{j}}(\la) d\mu_{P_j}(p),
	\end{align}	
    which observes directly that both almost periodicity and convergence with respect to the invariant mean are preserved under composition with group automorphisms.
	Consequently, if $\newgtione$ satisfies the $1$-UCP, then the system  $\displaystyle\bigcup\limits_{j \in \mathcal{J}} \{T_{\la} D_\e g^{(1)}_p \}_{\la \in \Gamma_j}$ satisfy the $1$-UCP. Similarly, since $\newgtitwo$ is a Bessel, it follows that   $\displaystyle\bigcup\limits_{j \in \mathcal{J}} \{T_{\la} D_\z g^{(2)}_p \}_{\la \in \Gamma_j}$ is Bessel system. Now, in view of Remark \ref{rem_dual_UCP}, the systems   $\displaystyle\bigcup\limits_{j \in \mathcal{J}} \{T_{\la} D_\e g^{(1)}_p \}_{\la \in \Gamma_j}$ and $\displaystyle\bigcup\limits_{j \in \mathcal{J}} \{T_{\la} D_\z g^{(2)}_p \}_{\la \in \Gamma_j}$ satisfy the dual $1$-UCP.	By Theorem \ref{thm_characterization_orthogonal_frame},   these systems are pairwise orthogonal  if and only if for each  $\al \in \umzero,$ we obtain
	\begin{align*}
		\displaystyle\sum_{j \in \mathcal{J} : \alpha \in \Gamma^\perp_j} \frac{1}{s(\Gamma_{j})} \int\limits_{P_j}  \overline{\widehat{D_\e g_{p}^{(1)}}(\gamma)} \widehat{D_\z g_{p}^{(2)}}(\gamma+\al)  d\mu_{P_j}(p)=0 \quad \mbox{for a.e.  } \g \in \hg,
	\end{align*}
	and
	\begin{align*}
		\displaystyle\sum_{j \in \mathcal{J}} \frac{1}{s(\Gamma_{j})} \int\limits_{P_j}  \overline{\widehat{D_\e g_{p}^{(1)}}(\gamma)} \widehat{D_\z g_{p}^{(2)}}(\gamma)  d\mu_{P_j}(p)=0 \quad \mbox{for a.e.  } \g \in \hg.
	\end{align*}
	Since $\widehat{D_\e f}(\gamma)=(\Delta (\e))^{1/2}\widehat{f}(\e^{\ast}(\gamma))$, this orthogonality condition is equivalent to, for each  $\al \in \umzero,$ we have
	\begin{align*}
		\displaystyle\sum_{j \in \mathcal{J} : \alpha \in \Gamma^\perp_j} \frac{1}{s(\Gamma_{j})} \int\limits_{P_j}  \overline{\widehat{ g_{p}^{(1)}}(\e^\ast \gamma)} \widehat{g_{p}^{(2)}}(\z^\ast(\gamma+\al))  d\mu_{P_j}(p)=0, \quad \mbox{for a.e.  } \g \in \hg,
	\end{align*}
	and
	\begin{align*}
		\displaystyle\sum_{j \in \mathcal{J}} \frac{1}{s(\Gamma_{j})} \int\limits_{P_j}  \overline{\widehat{ g_{p}^{(1)}}(\e^\ast\gamma)} \widehat{g_{p}^{(2)}}(\z^\ast\gamma)  d\mu_{P_j}(p)=0 \quad \mbox{for a.e.  } \g \in \hg.
	\end{align*}
	This completes the proof.	
	\ep

    \noindent{\bf Proof of Theorem \ref{thm_char_Parseval_frames}.}
	Assume that (ii) is satisfied. Then $\ta(\gamma)=0$ for every $\alpha \in \umzero$. 
By Proposition~\ref{prop_Fourier_multiplier}, it follows that $\Theta$ is a Fourier multiplier, that is, \( \widehat{\Theta f}(w) = s(w)\, \widehat{f}(w) \), $w \in \hg$. Moreover,
	\[
	s(w) = \sum_{j \in \mathcal{J}} \frac{1}{s(\Gamma_j)} 
	\int\limits_{P_j} \overline{\widehat{g_p^{(1)}}(w)}\, \widehat{g_p^{(1)}}(w)\, d\mu_{P_j}(p)= t_0(w)=K, \mbox{ for } w \in \hg.
	\]
	by hypothesis. Thus $\norm{\widehat{\Theta f}(w)}=K\norm{\widehat{f}}$ and hence, the system $\gtione$ is a tight  frame with frame bound $K$. The converse part follows directly by observing the following formula:
	\begin{align*}
		\widehat{\w}(\al)=\displaystyle\sum_{j \in \J: \al \in \Gjp} d_{j,\al}=\left \langle \widehat{f}, M_{\bar{t}_\al }T_{-\al}\widehat{f} \right \rangle=K \delta_{\alpha, 0}.
	\end{align*}
This concludes the proof.
	\ep
	As an application of the GTI framework, we provide explicit criteria for the pairwise orthogonality of structured systems such as wavelets, Gabor, and shearlets.

    \subsection{Applications to Gabor, wavelet, and shearlet systems}\label{sec_application_to_characterization_result}

   This subsection illustrates applications of our main characterization results, Theorems~\ref{thm_new_characterization_orthogonal_frame} and \ref{thm_characterization_orthogonal_frame}, to several structured systems arising in harmonic analysis.
In particular, we apply Theorem~\ref{thm_new_characterization_orthogonal_frame} to Bessel families with TI, Gabor, wavelet, and cone-adapted shearlet structures, all of which can be viewed as special cases of GTI systems.
	
	This subsection is organized as follows.
In Corollary~\ref{cor_char_ortogonal_gabor}, we characterize pairwise orthogonality of Gabor systems. Subsequently, Proposition~\ref{prop_orthogonal_wavelet_characterization} provides a corresponding characterization for pairwise orthogonal wavelet systems in $\ltg$. Furthermore, pairwise orthogonality of composite wavelet and classical shearlet systems is established in  Proposition~\ref{prop_orthogonal_CWS} and Proposition~\ref{prop_char_orthogonal_CSS}, respectively. Finally, we present a characterization for cone-adapted shearlet systems (see, Proposition~\ref{prop_char_orthogonal_CASS}), which are often more effective in applications due to their ability to represent directional information more uniformly. Moreover, cone-adapted shearlet systems may be viewed as a finite union of shift-invariant systems.  In particular, they can also be interpreted as wavelet systems associated with composite dilations.

\subsubsection{Translation invariant systems}

Theorem \ref{thm_new_characterization_orthogonal_frame} immediately yields the following application:

\begin{prop}\label{prop_char_pairwise_orthogonal_TI}
	Let the TI systems \(\bigcup_{j \in \mathcal{J}}\{T_{\la} g^{(1)}_p\}_{\la \in \e\Gamma, p \in P}\) and \(\bigcup_{j \in \mathcal{J}}\{T_{ \la} g^{(2)}_p\}_{\la \in \z \Gamma, p \in P}\) be  Bessel (frame) systems. Then, these systems are pairwise orthogonal if and only if  for each $\al \in \Gamma^\perp$, we have 
	\begin{equation*}\label{eq_t_alpha}
		\ta(\gamma):=\sum_{j \in \J} \frac{1}{s(\Gamma)} \int\limits_{P_j} \overline{\widehat{g_p^{(1)}} (\e^\ast \gamma)} \widehat{g_p^{(2)}} (\z^\ast (\gamma +\al)) \, d\mu_{P_j}(p)=0  \mbox{ for a.e. } \gamma \in \hg.
	\end{equation*}
\end{prop}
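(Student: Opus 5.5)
The plan is to specialize Theorem~\ref{thm_new_characterization_orthogonal_frame} to the constant family $\Gamma_j=\Gamma$ for all $j\in\J$ (with $P_j=P$). The main work is to check that the hypotheses of that theorem hold automatically in the translation-invariant setting, and then to read off the two conditions $t_\alpha=0$ ($\alpha\neq0$) and $t_0=0$ as one condition indexed by $\alpha\in\Gamma^\perp$.

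First, I would verify the unconditional convergence hypothesis. The TI system $\bigcup_{j}\{T_\la g^{(1)}_p\}_{\la\in\e\Gamma,p\in P}$ can be regarded as a single translation-invariant system over the co-compact subgroup $\e\Gamma$ with index set $\J\times P$. For a single subgroup, the $\alpha$-LIC reduces to the Bessel condition. Concretely, since $\eta\Gamma$ is fixed, the LIC sum over $j$ involves only the finitely many $\alpha\in(\e\Gamma)^\perp=\e^\ast\Gamma^\perp$ meeting a compact set. This sum is controlled by the Bessel bound $\sum_j\int_P|\widehat{g_p^{(1)}}|^2\,d\mu_P\le s(\e\Gamma)B$ from \cite[Proposition 3.3]{JL2016}.

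Alternatively, and more directly, each $w_{f;g^{(1)}_p,j}$ is $(\e\Gamma)^\perp$-periodic with one common period group. The partial sums are therefore trigonometric series in the same discrete set of frequencies, and Bessel-ness gives absolute convergence of the Fourier coefficients $\sum_j d_{P_j,\alpha}$. This yields uniform convergence, hence the $\infty$-UCP and in particular the $1$-UCP (Remark~\ref{rem_relation_between_UCP_LIC}). I expect this step, justifying the $1$-UCP for a TI Bessel system without extra assumptions, to be the only real obstacle. I would handle it by citing that the $\alpha$-LIC holds for TI Bessel systems, in the spirit of \cite{JL2016}.

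Once the $1$-UCP is in hand, Theorem~\ref{thm_new_characterization_orthogonal_frame} applies. Since $\Gamma_j^\perp=\Gamma^\perp$ for every $j$, the constraint ``$j\in\J:\alpha\in\Gamma_j^\perp$'' is vacuous for $\alpha\in\Gamma^\perp$, and $s(\Gamma_j)=s(\Gamma)$. Hence $\bigcup_j\Gamma_j^\perp\setminus\{0\}=\Gamma^\perp\setminus\{0\}$. The condition for $\alpha\neq0$ becomes exactly $\sum_{j}\frac{1}{s(\Gamma)}\int_{P_j}\overline{\widehat{g_p^{(1)}}(\e^\ast\gamma)}\,\widehat{g_p^{(2)}}(\z^\ast(\gamma+\alpha))\,d\mu_{P_j}(p)=0$. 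The $t_0$ condition is the same expression evaluated at $\alpha=0$. Merging the two gives the single requirement $t_\alpha=0$ a.e. for all $\alpha\in\Gamma^\perp$, which is the stated equivalence.
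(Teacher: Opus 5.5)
Your proposal is correct and follows the paper's route. The paper gives no separate proof: it states only that the proposition is an immediate specialization of Theorem~\ref{thm_new_characterization_orthogonal_frame} to $\Gamma_j=\Gamma$, where $\bigcup_j\Gamma_j^\perp=\Gamma^\perp$ and the $t_\alpha$ ($\alpha\neq 0$) and $t_0$ conditions merge into one. Your additional check of the UCP hypothesis, which the paper leaves implicit, is a useful supplement. You observe that only finitely many $\alpha$ in the discrete group $(\eta\Gamma)^\perp=\eta^\ast\Gamma^\perp$ meet $\operatorname{supp}\widehat{f}-\operatorname{supp}\widehat{f}$, and that the Calder\'on sum is bounded by $s(\eta\Gamma)B$. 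Hence a TI Bessel system automatically satisfies the LIC, and therefore the $\infty$-UCP and $1$-UCP.
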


\subsubsection{Gabor systems}
Given $\chi \in \widehat{G}$, we define the \textit{modulation operator} $M_\chi$ acting on $L^2(G)$ as $M_{\chi}(f)(x)=\chi(x)f(x) \mbox{ for all } x \in G.$
This operator satisfies the following Fourier domain identity:
\begin{equation}\label{eq_modulation_fourier}
	(\widehat{M_\chi f})(\g)=\int\limits_{G} \chi(x) f(x) \overline{\g(x)}d\mu_{G}(x)=\int\limits_{G}  f(x) \overline{(\g-\chi)(x)}d\mu_{G}(x)=T_\chi \widehat{f}(\g)
\end{equation}
for all $f \in L^2(G)$ and  a.e. $\gamma \in \widehat{G}$. Consider co-compact subgroups $\Gamma$ of $G$ and $\Lambda$ of $\widehat{G}$. Consider a measure space $(\Lambda, \Sigma_{\Lambda}, \mu_{\Lambda})$ satisfying the standing hypothesis, where $\Sigma_\Lambda$ is a $\sigma$-algebra and $\mu_\Lambda$ is a regular Borel measure. Let $\mathcal{J} \subset \mathbb{Z}$ be an index set, and let  $\Phi := \{\phi_j : j \in \mathcal{J}\} \subset L^2(G)$ be a set of functions. Then the collection 
\begin{equation}\label{eq_Gabor_system}
	\{T_\la M_\chi \phi_{j}\}_{\la \in \e\Gamma, \chi \in \Lambda, j \in \J}
\end{equation} 
is referred to as the \textit{Gabor system} generated by the set $\Phi$.  We can express the Gabor system as
\begin{align*}
	\{T_\la M_\chi \phi_{j}\}_{\la \in \eta \G, \chi \in \Lambda, j \in \J}=\bigcup\limits_{j \in \mathcal{J}} \{T_\la g_{p}^{(1)}\}_{\la \in \e\Gamma, p \in P_j},
\end{align*}
where $P_j = \{(j, \chi) : \chi \in \Lambda\}$ and $g_p^{(1)} = g_{j, \chi}^{(1)} := M_\chi \phi_j$. In this representation, the modulation parameter $\chi$ is incorporated into the index set $P_j$, while the translation subgroup remains fixed. Thus, the Gabor system is a special case of a TI system. Consequently, the following result is a direct corollary of Proposition~\ref{prop_char_pairwise_orthogonal_TI}.
\begin{cor}\label{cor_char_ortogonal_gabor}
	Let $	\{ T_\la M_\chi \psi_{j}\}_{  \la \in \e\Gamma, \chi \in \Lambda, j \in J}$ and $	\{ T_\la M_\chi \phi_{j}\}_{ \la \in \z \G, \chi \in \Lambda, j \in J}$ be two Gabor Bessel (frame) systems  in $L^2(G)$. Then these two systems are pairwise orthogonal Gabor Bessel (frame) systems in $L^2(G)$ if and only if, for each $\al \in \Gamma^\perp$, the following condition is satisfied:   
	\begin{align}
		\sum\limits_{j \in J}  \int\limits_{\Lambda} \overline{\widehat{\phi}_j( \e^\ast(\g-\chi))} \widehat{\psi}_j(\z^\ast(\g+\al-\chi))d \mu_{\Lambda}(\chi)=0 \quad \mbox{for a.e. } \g \in \widehat{G}. \nonumber
	\end{align}
\end{cor}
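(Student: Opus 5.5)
The plan is to obtain Corollary~\ref{cor_char_ortogonal_gabor} by applying Proposition~\ref{prop_char_pairwise_orthogonal_TI} to the TI representation of the Gabor systems given just before the statement. For the first system take $P_j=\{(j,\chi):\chi\in\Lambda\}$ with measure $\mu_\Lambda$ and generators $g^{(1)}_{j,\chi}=M_\chi\psi_j$. For the second take the same $P_j$ and $g^{(2)}_{j,\chi}=M_\chi\phi_j$. The translation subgroups are then $\eta\Gamma$ and $\zeta\Gamma$, so both Gabor systems are TI systems on the common subgroup $\Gamma$, twisted by the automorphisms $\eta,\zeta$. The standing assumptions (I)--(III) hold for this index structure because $\Lambda$ carries a $\sigma$-finite regular Borel measure and $\chi\mapsto M_\chi\phi_j$ is continuous. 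The Bessel (frame) hypothesis transfers verbatim, since the systems are literally equal as families. I will assume, as Proposition~\ref{prop_char_pairwise_orthogonal_TI} implicitly does, that the UCP hypothesis of Theorem~\ref{thm_new_characterization_orthogonal_frame} holds in this TI setting.

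Next, compute the Fourier transforms of the generators. By \eqref{eq_modulation_fourier}, $\widehat{M_\chi f}=T_\chi\widehat f$, so $\widehat{g^{(1)}_{j,\chi}}(\omega)=\widehat{\psi_j}(\omega-\chi)$ and $\widehat{g^{(2)}_{j,\chi}}(\omega)=\widehat{\phi_j}(\omega-\chi)$. Substituting into the condition of Proposition~\ref{prop_char_pairwise_orthogonal_TI}, with the sum over $j$ and the integral over $P_j$ merged into $\sum_j\int_\Lambda\cdots d\mu_\Lambda(\chi)$, gives
\[
\frac{1}{s(\Gamma)}\sum_{j\in J}\int\limits_{\Lambda}\overline{\widehat{\psi_j}(\eta^\ast\gamma-\chi)}\,\widehat{\phi_j}(\zeta^\ast(\gamma+\alpha)-\chi)\,d\mu_\Lambda(\chi)=0 .
\]
The constant $1/s(\Gamma)$ is nonzero and can be dropped.

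The main obstacle is matching this with the displayed form in the corollary, which writes $\eta^\ast(\gamma-\chi)$ and $\zeta^\ast(\gamma+\alpha-\chi)$. This requires the modulation to sit inside the automorphism. It happens if the Gabor system is read as $T_\lambda M_\chi$ with $\chi$ transported by the dilation, equivalently if $\Lambda$ is invariant under $\eta^\ast,\zeta^\ast$ and the measure is relabelled accordingly. The cleaner route is to run the argument at the level of Theorem~\ref{thm_new_characterization_orthogonal_frame}'s proof, replacing $g_p$ by $D_\eta g_p$ and using $\widehat{D_\eta M_\chi\psi}(\gamma)=\Delta(\eta)^{1/2}\widehat{\psi}(\eta^\ast\gamma-\chi)$. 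I would then make explicit the convention, or the change of variables $\chi\mapsto\eta^\ast\chi$ together with the corresponding push-forward of $\mu_\Lambda$, under which the two expressions agree.

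I would also reconcile the roles of $\phi_j$ and $\psi_j$, which appear swapped in the displayed formula; conjugating the whole expression handles this. The $\alpha=0$ case is included in ``for each $\alpha\in\Gamma^\perp$,'' so it covers both the $t_0$ and the $t_\alpha$ conditions.
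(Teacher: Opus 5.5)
You follow the paper's route exactly. The paper also regards the Gabor systems as TI systems with $P_j=\{(j,\chi):\chi\in\Lambda\}$ and generators $M_\chi(\cdot)$, and appeals to Proposition~\ref{prop_char_pairwise_orthogonal_TI}. Your computation via $\widehat{M_\chi f}=T_\chi\widehat f$ is correct. So for the systems as written (with $\psi_j$ attached to $\eta$), pairwise orthogonality is equivalent to
\[
\sum_{j\in J}\int_{\Lambda}\overline{\widehat{\psi_j}(\eta^\ast\gamma-\chi)}\,\widehat{\phi_j}\bigl(\zeta^\ast(\gamma+\alpha)-\chi\bigr)\,d\mu_\Lambda(\chi)=0\quad\text{for all }\alpha\in\Gamma^\perp\text{ and a.e. }\gamma\in\widehat G .
\]
Your UCP assumption is harmless. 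For a Bessel TI system and $f\in\mathcal D_B$, each $w_{f;g_p^{(1)},j}$ is a nonnegative trigonometric polynomial with frequencies in the fixed finite set $(\eta\Gamma)^\perp\cap(\operatorname{supp}\widehat f-\operatorname{supp}\widehat f)$, so the $\infty$-UCP holds automatically.

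The gap is the final matching step, and it cannot be closed. For $\eta\neq\zeta$, no convention or substitution in $\chi$ converts your condition into the printed one. The substitution $\chi\mapsto\eta^\ast\chi$ acts on both factors at once. But turning $\eta^\ast\gamma-\chi$ into $\eta^\ast(\gamma-\chi)$ needs $\chi\mapsto\eta^\ast\chi$ in the first factor, while turning $\zeta^\ast(\gamma+\alpha)-\chi$ into $\zeta^\ast(\gamma+\alpha-\chi)$ needs $\chi\mapsto\zeta^\ast\chi$ in the second. Invariance of $\Lambda$ under both maps does not change this. Your \emph{cleaner route} through $D_\eta$ produces the same $\eta^\ast\gamma-\chi$, as your own formula for $\widehat{D_\eta M_\chi\psi}$ shows.

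Conjugation does not fix the $\phi_j/\psi_j$ swap either, because it cannot move $\eta^\ast$ from $\psi_j$ to $\phi_j$. So for $\eta\neq\zeta$ the printed formula attaches the generators to the wrong automorphisms. For $\eta=\zeta$ the swap is harmless: conjugate and replace $(\gamma,\alpha)$ by $(\gamma+\alpha,-\alpha)$.

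Even for $\eta=\zeta$, the position of $\chi$ matters unless $\eta^\ast\Lambda=\Lambda$ with $\mu_\Lambda$ rescaled by a constant under $\eta^\ast$. For example, take $G=\mathbb R$, $\Gamma=\Lambda=\mathbb Z$, and $\eta=\zeta$ the dilation $x\mapsto 2x$. Then the printed condition is the orthogonality criterion for modulation set $\tfrac12\mathbb Z$ rather than $\mathbb Z$, and the two are not equivalent.

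So the printed formula does not follow from Proposition~\ref{prop_char_pairwise_orthogonal_TI} for the systems as stated. The paper's one-line derivation, which is the same substitution you made, actually yields your expression. What you can prove is the condition you derived. The printed condition is exactly what the same computation gives for $\{T_\lambda M_{\eta^\ast\chi}\phi_j\}_{\lambda\in\eta\Gamma,\chi\in\Lambda}$ and $\{T_\lambda M_{\zeta^\ast\chi}\psi_j\}_{\lambda\in\zeta\Gamma,\chi\in\Lambda}$, because $\widehat{M_{\eta^\ast\chi}\phi_j}(\eta^\ast\gamma)=\widehat{\phi_j}(\eta^\ast(\gamma-\chi))$. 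It agrees with yours when $\eta=\zeta$ and $\eta^\ast$ preserves $(\Lambda,\mu_\Lambda)$ up to a constant, in particular for $\eta=\zeta=\mathrm{id}$.
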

Note that if we assume $\e=\z=I$ (identity automorphism), then the above result coincides with \cite[Proposition~4.3]{GS2018}. Furthermore, if we assume $G=\mathbb{Z}^n$, and \(\e= \z = I\) (the identity matrix) and that \(P\) is a singleton set, then the result reduces to the characterization given by Lopez and Han in \cite[Theorem~1.4(ii)]{LH}. 

\subsubsection{Wavelet systems}
Let $\mathcal{J} \subset \mathbb{Z}$ be an index set and consider a family of automorphisms $\mathcal{A} = \{\eta_j : j \in \mathcal{J}\} \subset \operatorname{Aut}(G)$. Choose a co-compact subgroup $\Gamma$  of $G$. For a countable index set $\mathcal{I}$, let $\Psi := \{\psi_i : i \in \mathcal{I}\} \subset L^2(G)$ be a collection of functions. Then, the system
\[
\{D_{\eta_j \eta^{-1}} T_\la \psi_i\}_{j \in \mathcal{J},\, \la \in  \G,\, i \in \mathcal{I}}
\]
is called the \textit{wavelet system} generated by $\Psi$.
Using the intertwining relation
\(
D_{\eta_j \eta^{-1}} T_\la = T_{\eta \eta_j^{-1}(\la)} D_{\eta_j \eta^{-1}}  \) for \(\eta_j \in \mathcal{A} \text{ and } \la \in \G,\)
the wavelet system can be expressed in the form of a GTI system as \[
\{D_{\eta_j \eta^{-1}} T_\la \psi_i\}_{j \in \mathcal{J},\, \la \in \Gamma,\, i \in \mathcal{I}}=  \bigcup_{j \in \mathcal{J}} \{T_\la g_p^{(1)}\}_{\la \in \eta\Gamma_j,\, p \in P_j},
\] where $\Gamma_j = \eta_j^{-1} \Gamma$ for each $\eta_j \in \mathcal{A}$, the functions $g_p^{(1)} = g_{i,j}^{(1)} := D_{\eta_j \eta^{-1}} \psi_i$, and  $P_j = \{(i, j) : i \in \mathcal{I}\}$. The automorphism $\e' : \widehat{G} \to \widehat{G}$, defined on $\widehat{G}$, is the adjoint of $\e \in \operatorname{Aut}(G)$. The annihilator $\Gamma_j^\perp$ of $\Gamma_j$ is
$\Gamma_j^\perp = ((\eta_j)^{-1}\Gamma)^\perp = \e'_j(\Gamma^\perp),$
 \cite[Proposition 6.5]{BMR}. As an application of Theorem~\ref{thm_new_characterization_orthogonal_frame}, we obtain the following result.
\begin{prop}\label{prop_orthogonal_wavelet_characterization}
	Let   $\{D_{\eta_j \e^{-1}} T_\la \psi_i\}_{j \in \mathcal{J},\, \la \in  \G,\, i \in \mathcal{I}}$ and $\{D_{\e_j \z^{-1}} T_\la \phi_i\}_{j \in \mathcal{J},\, \la \in \Gamma,\, i \in \mathcal{I}}$ be   two wavelet Bessel (frame) systems in $L^2(G)$ such that one of the system  is satisfying the  \(1\)-UCP. Then the following assertions are equivalent:
	\begin{itemize}
		\item[(i)]  $	\{D_{\eta_j \e^{-1}} T_\la \psi_i\}_{j \in \mathcal{J},\, \la \in  \G,\, i \in \mathcal{I}}$ and $\{D_{\e_j \z^{-1}} T_\la \phi_i\}_{j \in \mathcal{J},\, \la \in \Gamma,\, i \in \mathcal{I}}$  are pairwise orthogonal wavelet Bessel (frame) systems  in $L^2(G)$.
		\item[(ii)] For each $\al \in \bigcup\limits_{j \in \J}\e'_j\Gamma^\perp,$ we have
		\begin{equation*}
			\displaystyle\sum_{j \in \mathcal{J} : \alpha \in \e'_j(\Gamma^\perp)} \frac{1}{s(\Gamma_{j})} \sum_{i \in \mathcal{I}}  \overline{\widehat{\psi_i}\left(\e'\e_j^\ast\e^\ast\gamma\right)} \widehat{\phi_i}\left(\z'\z_j^\ast\z^\ast(\gamma+\al)\right) =0 \quad \mbox{for a.e. } \gamma \in \widehat{G}.
		\end{equation*}
	\end{itemize}
\end{prop}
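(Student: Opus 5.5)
The plan is to deduce Proposition~\ref{prop_orthogonal_wavelet_characterization} directly from Theorem~\ref{thm_new_characterization_orthogonal_frame}. The first step writes both wavelet systems as GTI systems with the common family of subgroups $\Gamma_j=\eta_j^{-1}\Gamma$. Using the intertwining relation $D_{\eta_j\eta^{-1}}T_\la=T_{\eta\eta_j^{-1}\la}D_{\eta_j\eta^{-1}}$ given before the statement, we get
\[
\{D_{\eta_j\eta^{-1}}T_\la\psi_i\}_{j,\la\in\Gamma,i}=\bigcup_{j\in\J}\{T_\la g^{(1)}_p\}_{\la\in\eta\Gamma_j,\,p\in P_j},\qquad g^{(1)}_{(i,j)}=D_{\eta_j\eta^{-1}}\psi_i .
\]
In the same way the second system becomes $\bigcup_{j}\{T_\la g^{(2)}_p\}_{\la\in\zeta\Gamma_j,\,p\in P_j}$ with $g^{(2)}_{(i,j)}=D_{\eta_j\zeta^{-1}}\phi_i$. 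Here $P_j=\{(i,j):i\in\mathcal I\}$ carries the counting measure, so the standing assumptions (I)--(III) hold automatically. The Bessel (frame) and $1$-UCP hypotheses are statements about these same systems, so the hypotheses of Theorem~\ref{thm_new_characterization_orthogonal_frame} are satisfied. That theorem therefore says that (i) is equivalent to the vanishing, for a.e.\ $\gamma$, of the functions $t_\alpha$ for $\alpha\in\bigcup_j\Gamma_j^\perp\setminus\{0\}$ and of $t_0$. I will merge these two conditions into a single condition over all $\alpha\in\bigcup_j\Gamma_j^\perp$, since $0$ lies in every $\Gamma_j^\perp$.

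The second step identifies the index set. By \cite[Proposition 6.5]{BMR}, $\Gamma_j^\perp=(\eta_j^{-1}\Gamma)^\perp=\eta_j'(\Gamma^\perp)$, so $\bigcup_j\Gamma_j^\perp=\bigcup_j\eta_j'\Gamma^\perp$. It follows that the condition $\alpha\in\Gamma_j^\perp$ in the sum is the same as $\alpha\in\eta_j'(\Gamma^\perp)$.

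The third step computes the Fourier transforms. From $\widehat{D_\theta f}(\gamma)=\Delta(\theta)^{1/2}\widehat f(\theta^\ast\gamma)$ with $\theta=\eta_j\eta^{-1}$ we obtain $\widehat{g^{(1)}_{(i,j)}}(\eta^\ast\gamma)=\Delta(\eta_j\eta^{-1})^{1/2}\widehat{\psi_i}\big((\eta_j\eta^{-1})^\ast\eta^\ast\gamma\big)$. Since $(\cdot)^\ast=((\cdot)')^{-1}$, this gives $(\eta_j\eta^{-1})^\ast=\eta_j^\ast\eta'$ up to the order in which the adjoints compose. The same computation applies to $g^{(2)}$ with $\zeta$.

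The main obstacle is bookkeeping rather than analysis. First, the composition of adjoint inverses must come out in the order $\eta'\eta_j^\ast\eta^\ast$ used in the statement; this needs a careful check of the convention $(\alpha\beta)'=\beta'\alpha'$. Second, the modular factors $\Delta(\eta_j\eta^{-1})^{1/2}\Delta(\eta_j\zeta^{-1})^{1/2}$ must be handled. These factors depend on $j$, so they cannot simply be pulled out of the sum. I would either absorb them into the covolume, using $s(\Gamma_j)=s(\eta_j^{-1}\Gamma)=\Delta(\eta_j)^{-1}s(\Gamma)$ and writing $s(\Gamma_j)$ for the resulting normalized weight, or note that the statement uses $s(\Gamma_j)$ loosely and adjust it to match. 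Once these identifications are made, the condition of Theorem~\ref{thm_new_characterization_orthogonal_frame} becomes exactly the displayed condition in (ii), and this proves the equivalence.
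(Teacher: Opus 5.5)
Your reduction is the one the paper intends. The paper gives no computation beyond calling the proposition an application of Theorem~\ref{thm_new_characterization_orthogonal_frame} after the GTI rewriting. Your first two steps are correct: $\Gamma_j=\eta_j^{-1}\Gamma$, $g^{(1)}_{(i,j)}=D_{\eta_j\eta^{-1}}\psi_i$, $g^{(2)}_{(i,j)}=D_{\eta_j\zeta^{-1}}\phi_i$, $\Gamma_j^\perp=\eta_j'\Gamma^\perp$, and $t_0$ merges into the family because $0\in\Gamma_j^\perp$.

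The gap is the last step. You name the two bookkeeping issues but settle them by fiat, and when they are actually checked they do not produce (ii).
\begin{itemize}
\item[(a)] Composition order. With $(\alpha\beta)'=\beta'\alpha'$ one gets $(\eta_j\eta^{-1})^\ast\eta^\ast=\eta_j^\ast\eta'\eta^\ast=\eta_j^\ast$, and likewise $(\eta_j\zeta^{-1})^\ast\zeta^\ast=\eta_j^\ast$. So the generators are evaluated at $\eta_j^\ast\gamma$ and $\eta_j^\ast(\gamma+\alpha)$. This agrees with the printed $\eta'\eta_j^\ast\eta^\ast\gamma$ only if $\eta'$ commutes with $\eta_j^\ast$. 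Also, the $\zeta_j$ in (ii) must be read as $\eta_j$, since the second system is dilated by $\eta_j\zeta^{-1}$.
\item[(b)] Modular weights. The constant $\Delta(\eta_j\eta^{-1})^{1/2}\Delta(\eta_j\zeta^{-1})^{1/2}=\Delta(\eta_j)\,(\Delta(\eta)\Delta(\zeta))^{-1/2}$ depends on $j$. Several $j$ contribute to the same $\alpha$ (all of them when $\alpha=0$), so multiplying the $j$-th summand by $\Delta(\eta_j)$ changes the condition. ``Absorbing'' it into $s(\Gamma_j)$ redefines the statement rather than proving it.
\end{itemize}

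With the paper's formula $\widehat{D_\theta f}=\Delta(\theta)^{1/2}\,\widehat f\circ\theta^\ast$, what Theorem~\ref{thm_new_characterization_orthogonal_frame} actually yields is, for every $\alpha\in\bigcup_{j}\eta_j'\Gamma^\perp$,
\[
\sum_{j\in\mathcal{J}:\,\alpha\in\eta_j'\Gamma^\perp}\frac{\Delta(\eta_j)}{s(\Gamma_j)}\sum_{i\in\mathcal{I}}\overline{\widehat{\psi_i}(\eta_j^\ast\gamma)}\,\widehat{\phi_i}(\eta_j^\ast(\gamma+\alpha))=0\quad\text{for a.e. }\gamma\in\widehat{G}.
\]
For lattices and a dilation that is genuinely unitary, the combined weight is even independent of $j$, as in the classical unweighted wavelet conditions.

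The difference from (ii) is real. Take $G=\mathbb{R}$, $\Gamma=\mathbb{Z}$, $\eta=\zeta=\mathrm{id}$, $\eta_jx=2^jx$ with unitary dilations $f\mapsto 2^{j/2}f(2^j\cdot)$, $\widehat{\psi}=\chi_{[1/4,1)}$ and $\widehat{\phi}=\chi_{[1/4,1/2)}-\chi_{[1/2,1)}$, and set $u=\overline{\widehat{\psi}}\,\widehat{\phi}$.
\begin{itemize}
\item The supports have length less than $1$, so all terms with $\alpha\neq0$ vanish.
\item The mixed dual Gramian has symbol $\sum_{j}u(2^{-j}\gamma)=0$. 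Hence the two systems, which are Bessel and satisfy the LIC, are orthogonal.
\item With $s(\Gamma_j)=2^{-j}$, the sum in (ii) for $\alpha=0$ is $\sum_j 2^j u(2^{-j}\gamma)=2^{k-1}\neq0$ on $[2^{k-2},2^{k-1})$.
\end{itemize}

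So you should state and prove the corrected condition. Then record the extra hypotheses under which it reduces to the printed (ii): $\eta'$ commutes with every $\eta_j^\ast$, and $\Delta(\eta_j)$ is independent of $j$ (or one uses an $L^1$-normalized dilation with $\widehat{D_\theta f}=\widehat f\circ\theta^\ast$).
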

\bc\label{cor_orthogonal_wavelet_characterization}
Let   $\{D_{\eta_j} T_\la \psi_i\}_{j \in \mathcal{J},\, \la \in  \G,\, i \in \mathcal{I}}$ and $\{D_{\e_j} T_\la \phi_i\}_{j \in \mathcal{J},\, \la \in \Gamma,\, i \in \mathcal{I}}$ be   two wavelet Bessel (frame) systems in $L^2(G)$ satisfying the corresponding dual $1$-UCP. Then these wavelet systems are pairwise orthogonal if and only if 
for each $\al \in \bigcup\limits_{j \in \J}\e'_j\Gamma^\perp,$ we have
		\begin{equation*}
			\displaystyle\sum_{j \in \mathcal{J} : \alpha \in \e'_j(\Gamma^\perp)} \frac{1}{s(\Gamma_{j})} \sum_{i \in \mathcal{I}}  \overline{\widehat{\psi_i}\left(\e_j^\ast\gamma\right)} \widehat{\phi_i}\left(\z_j^\ast(\gamma+\al)\right) =0 \quad \mbox{for a.e. } \gamma \in \widehat{G}.
		\end{equation*}
\ec

\subsubsection{Composite wavelets and shearlet systems}
Let $\mathcal{K}$ and $\mathcal{J}$ be two countable index sets, and consider their Cartesian product $\mathcal{K} \times \mathcal{J}$.  
For each $k \in \mathcal{K}$ and $j \in \mathcal{J}$, let $C_k, D_j \in \mathrm{GL}_d(\mathbb{R})$.  
The system generated by the pair 
\[
\Big( \{C_k D_j\}_{(k,j) \in \mathcal{K} \times \mathcal{J}}, \Gamma \Big),
\] 
known as a \textit{wavelet system with composite dilation} in $L^2(\mathbb{R}^d)$, is given by
\[
\{ D_{C_k D_j} T_\lambda \psi_\ell \}_{ \ell \in \mathcal{I}, \, \lambda \in \Gamma,\,  j \in \mathcal{J},\, k \in \mathcal{K}},
\]
where $\{\psi_\ell : \ell \in \mathcal{I}\}$ is the generating set of functions and $\Gamma = E \mathbb{Z}^d \subset \R^d$ is a full-rank lattice (see~\cite{GuoLabateLimWeissWilson}).
It is common to assume that one of the two matrix families, for instance $\{C_k\}_{k \in \mathcal{K}}$, preserves volume.  
In the present framework, we require that the transposes $C_k^{\mathrm{T}}$, $k \in \mathcal{K}$, leave the annihilator $\Gamma^\perp$ invariant, i.e.,
\(
C_k^{\mathrm{T}} \Gamma^\perp = \Gamma^\perp.
\)
For example, if $\Gamma = \mathbb{Z}^d$, this assumption corresponds to $C_k \in \mathrm{SL}_d(\mathbb{Z})$.
Under this assumption, for each $(k,j) \in \mathcal{K} \times \mathcal{J}$, the annihilator subgroup can be written as
\(
 (C_k D_j \Gamma)^\perp = D_j^{\mathrm{T}} C_k^{\mathrm{T}} \Gamma^\perp = D_j^{\mathrm{T}} \Gamma^\perp
\). 
Thus, the following result is an application of Theorem~\ref{thm_characterization_orthogonal_frame}. 
\begin{prop}\label{prop_orthogonal_CWS}
	Let   $	\{ D_{C_k D_j} T_\la \psi_\ell \}_{k \in \mathcal{K},\, j \in \mathcal{J},\, \la \in \Gamma,\, \ell \in \mathcal{I}}$ and $\{ D_{C_k D_j} T_\la \phi_\ell \}_{k \in \mathcal{K},\, j \in \mathcal{J},\, \la \in \Gamma,\, \ell \in \mathcal{I}}$ be  two wavelet with composite dilation Bessel (frame) systems  in $L^2(\R^d)$ satisfying the corresponding dual $1$-UCP are pairwise orthogonal  if and only if 
	for each $\alpha \in \bigcup\limits_{j \in \J}D^T_j( E^{\ast}\Z^d),$ we have
	\begin{equation*}
		\displaystyle\sum_{j \in \mathcal{J} : \alpha \in D^T_j( E^{\ast}\Z^d)} \frac{1}{s(\Gamma_{j})} \sum_{\ell \in \mathcal{I}} \sum_{k \in \mathcal{K}}  \overline{\widehat{\psi_\ell}\left((C_{k}^{\ast}D_{j}^{\ast}\gamma\right)} \widehat{\phi_\ell}\left(C_{k}^{\ast}D_{j}^{\ast}(\gamma+\alpha)\right) =0 \quad \mbox{ for a.e. } \gamma \in \mathbb{R}^d,
	\end{equation*}
	where for a matrix $C\in \mathrm{GL}_d(\mathbb{R})$, we denote its inverse transpose by $C^{\ast} := (C^{{T}})^{-1}$.
\end{prop}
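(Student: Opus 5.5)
The plan is to realize each composite wavelet system as a GTI system relative to the family of subgroups $\Gamma_j := D_j^{-1}\Gamma$, $j\in\J$, with the $\mathcal{K}$-index absorbed into the measure spaces $P_j$. Then Theorem~\ref{thm_characterization_orthogonal_frame} applies directly, with $\eta=\zeta=\mathrm{Id}$. The intertwining relation $D_{A}T_\la = T_{A^{-1}\la}D_A$ gives $D_{C_kD_j}T_\la\psi_\ell = T_{D_j^{-1}C_k^{-1}\la}D_{C_kD_j}\psi_\ell$.

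As $\la$ runs through $\Gamma$, the point $C_k^{-1}\la$ runs through $C_k^{-1}\Gamma$. The standing hypothesis $C_k^{T}\Gamma^\perp=\Gamma^\perp$ is equivalent, by duality for lattices, to $C_k\Gamma=\Gamma$, so $C_k^{-1}\Gamma=\Gamma$. Hence the translation set is $D_j^{-1}\Gamma=\Gamma_j$, independent of $k$. We therefore write
\[
\{D_{C_kD_j}T_\la\psi_\ell\}=\bigcup_{j\in\J}\{T_\la g^{(1)}_p\}_{\la\in\Gamma_j,\,p\in P_j},
\]
with $P_j=\{(k,\ell):k\in\mathcal{K},\ell\in\mathcal{I}\}$ carrying counting measure, $g^{(1)}_{(k,\ell)}:=D_{C_kD_j}\psi_\ell$, and $g^{(2)}_{(k,\ell)}:=D_{C_kD_j}\phi_\ell$. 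Standing assumptions (I)--(III) hold automatically because $P_j$ is countable. The Bessel (frame) hypothesis and the dual $1$-UCP hypothesis are, by assumption, exactly those of Theorem~\ref{thm_characterization_orthogonal_frame} for this pair of GTI systems.

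Next we compute the ingredients of $t_\alpha$. The annihilator is $\Gamma_j^\perp=(D_j^{-1}E\Z^d)^\perp=D_j^{T}(E\Z^d)^\perp=D_j^T E^\ast\Z^d$, which matches the index set in the proposition. The covolume is $s(\Gamma_j)=|\det D_j|^{-1}s(\Gamma)$, although we can simply leave it as $s(\Gamma_j)$. On the Fourier side, with $(D_Af)(x)=|\det A|^{1/2}f(Ax)$ we have $\widehat{D_Af}(\g)=|\det A|^{-1/2}\widehat f(A^\ast\g)$. Since $(C_kD_j)^\ast=C_k^\ast D_j^\ast$, this gives
\[
\overline{\widehat{g^{(1)}_p}(\g)}\,\widehat{g^{(2)}_p}(\g+\alpha)=|\det C_kD_j|^{-1}\,\overline{\widehat{\psi_\ell}(C_k^\ast D_j^\ast\g)}\,\widehat{\phi_\ell}(C_k^\ast D_j^\ast(\g+\alpha)).
\]
Here $|\det C_k|=1$, because $C_k$ preserves a lattice. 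The factor $|\det D_j|^{-1}$ is a positive constant for each fixed $j$, and it must be absorbed or tracked. The stated formula suppresses it, presumably either by a normalisation convention on $D$ or by folding it into $s(\Gamma_j)$. The vanishing conditions in Theorem~\ref{thm_characterization_orthogonal_frame} are not invariant under multiplying different $j$-terms by different constants. I will therefore check carefully which dilation normalisation makes the displayed formula literally correct, and adjust by writing the weight $1/(s(\Gamma)\,)$ if needed.

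Substituting into $t_\alpha$ for $\alpha\in\bigcup_j\Gamma_j^\perp\setminus\{0\}$, and into $t_0$, yields the displayed sums; note that $\alpha=0$ is included in the union, which covers the $t_0$ condition. Theorem~\ref{thm_characterization_orthogonal_frame} then gives the equivalence. The only genuine obstacles are the constant bookkeeping just described and justifying $C_k\Gamma=\Gamma$ from the hypothesis on $C_k^T$. The latter follows from $(C_k\Gamma)^\perp=C_k^\ast\Gamma^\perp=\Gamma^\perp$ together with $\Gamma^{\perp\perp}=\Gamma$.
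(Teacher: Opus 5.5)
Your argument is the paper's argument. The paper identifies the annihilators as $D_j^{T}\Gamma^\perp$ (using $C_k^{T}\Gamma^\perp=\Gamma^\perp$) and appeals to Theorem~\ref{thm_characterization_orthogonal_frame}, exactly as you do with $\Gamma_j=D_j^{-1}\Gamma$ and $P_j=\mathcal{K}\times\mathcal{I}$. Your derivation of $C_k\Gamma=\Gamma$ via $\Gamma^{\perp\perp}=\Gamma$ makes explicit a step the paper leaves implicit.

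The point you leave open should be settled, not hedged. It is a defect of the displayed formula rather than of your argument, and the paper's bare appeal to the theorem never tracks these factors. With the unitary dilation $D_Af=|\det A|^{1/2}f(A\,\cdot)$ the weights combine to
\[
\frac{1}{s(\Gamma_j)}\cdot\frac{1}{|\det (C_kD_j)|}=\frac{|\det D_j|}{s(\Gamma)}\cdot\frac{1}{|\det D_j|}=\frac{1}{s(\Gamma)},
\]
so the correct criterion carries the $j$-independent weight $1/s(\Gamma)$. This is consistent with the paper's later cone-adapted shearlet statement, which drops all weights. The printed weight $1/s(\Gamma_j)$ is literally correct only for the $L^1$-normalised dilation $D_Af=|\det A|\,f(A\,\cdot)$, for which $\widehat{D_Af}=\widehat f\circ A^{\ast}$.

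The two readings are genuinely inequivalent. Take $d=1$, $\Gamma=\mathbb{Z}$, $D_j=2^j$, $C_k=1$, $\widehat\psi=\chi_{[1/2,6/5)}$ and $\widehat\phi=\chi_{[1/2,3/5)}-\chi_{[1,6/5)}$.
\begin{itemize}
\item Every term with $\alpha\neq 0$ vanishes, because both supports lie in an interval of length $7/10<1$ while $2^{-j}\alpha$ is a nonzero integer.
\item The unweighted $t_0$ vanishes identically, so the unitarily dilated systems are orthogonal.
\item The $2^j$-weighted sum equals $-1+2=1$ on $[1,6/5)$, so the printed condition fails.
\end{itemize}
So state the result with weight $1/s(\Gamma)$ for unitary dilations, or fix the $L^1$ normalisation explicitly. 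With that choice made, your proof is complete.
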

As a specific instance, wavelet systems with composite dilations correspond to the classical shearlet system. 
Here, we focus on $L^2(\mathbb{R}^2)$ for simplicity; however, we refer the reader to \cite[Section 3.4]{GuoLabateLimWeissWilson} for a detailed treatment of shearlet systems in the more general setting of $L^2(\mathbb{R}^d)$. We define
\[
P = \begin{pmatrix}
	1 & 1 \\
	0 & 1
\end{pmatrix}
\quad\text{and}\quad
Q = \begin{pmatrix}
	4 & 0 \\
	0 & 2
\end{pmatrix}
\]
Let $\Gamma=R\Z^2$  for $R\in GL_2(\R)$. Consider the pair $\big( \{P^k Q^j\}_{k,j \in \mathbb{Z}}, \Gamma \big)$.  
The corresponding wavelet system consists of functions of the form
\[
\{ D_{P^k Q^j} \, T_\lambda \, \psi_\ell \}_{ k,j \in \mathbb{Z},\, \lambda \in \Gamma,\, \ell \in \mathcal{I} }
\]
 is referred to as a \textit{classical shearlet system} in $L^2(\mathbb{R}^2)$. A shearlet system automatically satisfies the $\alpha$-LIC if it has the CC-condition.  Hence, the next result follows directly from Proposition~\ref{prop_orthogonal_CWS}.
\begin{prop}\label{prop_char_orthogonal_CSS}
	Let   $	\{ D_{P^k Q^j} T_\la \psi_\ell \}_{k\, j \in \Z,\, \la \in \Gamma,\, i \in \mathcal{I}}$ and $\{ D_{P^k Q^j} T_\la \phi_\ell \}_{k\, j \in \Z,\, \la \in \Gamma,\, \ell \in \mathcal{I}}$ be  two  classical shearlet Bessel systems  (frames) in $L^2(\R^2)$. Then, these systems   are pairwise orthogonal  if and only if 
	for each $m \in \Z$, $q \in ( R^{\ast}\Z^2 \setminus QR^{\ast}\Z^2),$ we have
	\begin{equation*}
		\frac{1}{s(\Gamma_{j})} \sum_{n=0}^{\infty}\sum_{k \in \Z}\sum_{\ell \in \mathcal{I}}  \overline{\widehat{\psi_\ell}\left((P^{k})^{\ast}Q^{n+m}\gamma\right)} \widehat{\phi_\ell}\left((P^{k})^{\ast}Q^{n}(Q^m\gamma+q)\right)  =0  \mbox{ for a.e. } \gamma \in \R^2
	\end{equation*}
	and 
	\begin{align*}
		\frac{1}{s(\Gamma_{j})} \sum_{j\in \Z}\sum_{k \in \Z}\sum_{\ell \in \mathcal{I}}  \overline{\widehat{\psi_\ell}\left((P^{k})^{\ast}Q^{-j}\gamma\right)} \widehat{\phi_\ell}\left((P^{k})^{\ast}Q^{-j}\gamma\right)  =0  \mbox{ for a.e. } \gamma \in \R^2.
	\end{align*}
\end{prop}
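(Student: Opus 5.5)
The plan is to deduce Proposition~\ref{prop_char_orthogonal_CSS} from Proposition~\ref{prop_orthogonal_CWS} by specializing to $C_k=P^k$, $D_j=Q^j$, $\mathcal{K}=\mathcal{J}=\mathbb{Z}$ and $\Gamma=R\mathbb{Z}^2$. Then I will make the index set $\{j : \alpha\in D_j^{T}\Gamma^\perp\}$ explicit. This requires three preliminary checks.

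\emph{Hypotheses of Proposition~\ref{prop_orthogonal_CWS}.} First, $(P^k)^{T}$ is lower unitriangular with integer entries, so it lies in $\mathrm{SL}_2(\mathbb{Z})$. However, $\Gamma^\perp=R^\ast\mathbb{Z}^2$ is invariant under $(P^k)^T$ only when $R^{-1}P^kR\in \mathrm{GL}_2(\mathbb{Z})$. I would assume this, as is implicit in the composite-dilation setup; it holds for instance when $R$ is the identity or commutes suitably with $P$. Second, the dual $1$-UCP is not assumed directly, so I will use the remark preceding the proposition: under the CC-condition the shearlet systems satisfy the $\alpha$-LIC. By Remark~\ref{rem_relation_between_UCP_LIC}(ii) they then satisfy the $\infty$-UCP, and by Remark~\ref{rem_dual_UCP}, together with Besselness, the pair satisfies the dual $\infty$-UCP and hence the dual $1$-UCP. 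This step is the weakest link, since it relies on the CC-condition. I would state it as implicit in the hypothesis, exactly as the preceding sentence of the paper does.

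\emph{Bookkeeping of frequencies.} Since $Q=\mathrm{diag}(4,2)$ is symmetric, $D_j^T\Gamma^\perp=Q^jR^\ast\mathbb{Z}^2$. These lattices are nested decreasingly in $j$: $Q^{j+1}R^\ast\mathbb{Z}^2\subseteq Q^jR^\ast\mathbb{Z}^2$. Their union over $j\in\mathbb{Z}$ is the set of admissible $\alpha$. Every nonzero $\alpha$ in the union has a unique maximal $m$ with $\alpha\in Q^{m}R^\ast\mathbb{Z}^2$. Writing $\alpha=Q^{m}q$ gives $q\in R^\ast\mathbb{Z}^2\setminus QR^\ast\mathbb{Z}^2$. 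This is the usual ``$q$ not divisible by $Q$'' decomposition, and it requires $Q^{-1}$ to be expanding so that the intersection $\bigcap_j Q^jR^\ast\mathbb{Z}^2=\{0\}$. Then $\{j:\alpha\in Q^jR^\ast\mathbb{Z}^2\}=\{j\le m\}$, which I reindex by $n=m-j\ge 0$.

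\emph{Substitution into the $t_\alpha$ expression.} With $C_k^\ast D_j^\ast=(P^k)^\ast Q^{-j}$, I substitute $\gamma\mapsto Q^m\gamma$, which is harmless for an a.e.\ statement since $Q^m$ is invertible. The first argument becomes $(P^k)^\ast Q^{n}\,Q^{-m}Q^{m}\gamma$ up to the reindexing convention. I would fix the sign convention so that the arguments match the displayed form $(P^k)^\ast Q^{n+m}\gamma$ and $(P^k)^\ast Q^n(Q^m\gamma+q)$. Equivalently, I replace $j$ by $-j$ if necessary, since $\mathcal{J}=\mathbb{Z}$ is symmetric, and check which orientation makes the nested-lattice argument go through. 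The covolume factor $1/s(\Gamma_j)$ is the one inherited from Proposition~\ref{prop_orthogonal_CWS}; I keep it in the form written in the statement. The $\alpha=0$ condition is the $t_0$ term, summed over all $j$, and gives the second displayed equation directly. Matching conventions in this substitution step is the main routine obstacle; the rest is direct specialization.
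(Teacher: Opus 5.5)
Your proposal is the paper's argument, which the paper compresses into ``follows directly from Proposition~\ref{prop_orthogonal_CWS}'': you specialize to $C_k=P^k$, $D_j=Q^j$, obtain the dual $1$-UCP from the CC-condition via $\alpha$-LIC, the $\infty$-UCP and Remark~\ref{rem_dual_UCP}, and resolve the index set by writing $\alpha=Q^mq$, so that $\{j:\alpha\in Q^jR^\ast\Z^2\}=\{j\le m\}$. Only small repairs are needed: the trivial intersection $\bigcap_j Q^jR^\ast\Z^2=\{0\}$ comes from $Q$ (not $Q^{-1}$) being expanding, and the nesting requires $R^{-1}QR$ to have integer entries; the displayed expression is just $t_{Q^mq}(Q^{2m}\gamma)$, so no re-orientation of $j$ is needed; and the covolume factor may be written outside the sum over $n$ only because $1/s\bigl((P^kQ^j)^{-1}\Gamma\bigr)=8^j/s(\Gamma)$ cancels the factor $8^{-j}$ from the unitarily normalized dilated generators $\widehat{D_{P^kQ^j}\psi_\ell}$, $\widehat{D_{P^kQ^j}\phi_\ell}$, leaving the constant $1/s(\Gamma)$.
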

We now turn our attention to cone-adapted shearlet systems, which will be the focus of the remainder of this subsection. To introduce these systems, we define $ P_1=P \, Q_1=Q,$,
\[
P_2 = \begin{pmatrix}
	1 & 0 \\
	1 & 1
\end{pmatrix}
\quad \text{and} \quad
Q_2 = \begin{pmatrix}
	2 & 0 \\
	0 & 4
\end{pmatrix}.
\]
Consider generating functions $\phi$ and $\psi_\ell$ in $L^2(\mathbb{R}^2)$ for $\ell = 1,2$, together with $\Gamma_\ell = R_\ell \mathbb{Z}^2$, $\ell = 0,1,2$, is full-rank lattices.  
The corresponding cone-adapted shearlet system is defined by
\[ \left\{ T_\la \phi \right\}_{\la \in \Gamma_0} \cup \left\{ D_{P_\ell^k Q_\ell^j} T_\la \psi_\ell \right\}_{\ell \in \{1,2\}, \; \la \in \Gamma_\ell, \; k \in \{-K_j, \ldots, K_j\},\; j \in \mathbb{N}_0}, \]
where $K_j \in \mathbb{N}_0$ for each $j$, typically chosen as  $K_j = 2^j \pm 1$ or $K_j = 2^j$.   
For simplicity, we take all lattices to coincide, $\Gamma_\ell = \Gamma = C \mathbb{Z}^2$ for $\ell = 0,1,2$, where $R \in \mathrm{GL}_2(\mathbb{R})$ is chosen so that $R^{\mathrm{T}} Q_\ell R^{\mathrm{T}}$ has integer entries for $\ell=1,2$.  
Since shearlet systems typically satisfy the $\alpha$-LIC under standard assumptions (see \cite{LVV}), Proposition~\ref{prop_char_orthogonal_CSS} can be applied directly.
\begin{prop}\label{prop_char_orthogonal_CASS}
	Let  $$	\left\{ T_\la \phi^{(1)} \right\}_{\la \in \Gamma_0} 
	\cup 
	\left\{ D_{P_\ell^k Q_\ell^j} T_\la \psi_\ell^{(1)} \right\}_{\ell \in \{1,2\}, \; \la \in \Gamma_\ell, \; k \in \{-K_j, \ldots, K_j\},\; j \in \mathbb{N}_0}$$ 
	and 
	$$\left\{ T_\la \phi^{(2)} \right\}_{\la \in \Gamma_0} 
	\cup 
	\left\{ D_{P_\ell^k Q_\ell^j} T_\la \psi_\ell^{(2)} \right\}_{\ell \in \{1,2\}, \; \la \in \Gamma_\ell, \; k \in \{-K_j, \ldots, K_j\},\; j \in \mathbb{N}_0}$$ 
	 be two cone-adapted shearlet Bessel systems  (frames) in $L^2(\R^2)$. Then, these  are pairwise orthogonal  if and only if we have
	\begin{equation*}
		\overline{\widehat{\phi^{(1)}}(\g)} \widehat{\phi^{(2)}}(\g)+ 
		\sum_{\ell \in \{1,2\}} \sum_{j=0}^{\infty} \sum_{k=-K_j}^{K_j}
		\overline{\widehat{\psi_i^{(1)}}((P_\ell^\ast)^k Q_\ell^{-j} \g)} \widehat{\psi_\ell^{(2)}}((P_\ell^\ast)^k Q_\ell^{-j} \g) =0\mbox{ for a.e. } \gamma \in \R^2
	\end{equation*}
	and
	\begin{equation*}
		\overline{\widehat{\phi^{(1)}}(\g)} \widehat{\phi^{(2)}}(\g+\al)+ 
		\sum_{\ell \in \{1,2\}} \sum_{j=0}^{m_\ell} \sum_{k=-K_j}^{K_j}
		\overline{\widehat{\psi_\ell^{(1)}}((P_\ell^\ast)^k Q_\ell^{-j} \g)} \widehat{\psi_\ell^{(2)}}((P_\ell^\ast)^k Q_\ell^{-j} \g+\al)=0 \mbox{ for a.e. } \gamma \in \R^2.
	\end{equation*}
	Here for each $\ell \in \{1,2\}$, any nonzero element $\alpha \in \Gamma^\perp$ can be uniquely expressed as
$\alpha = Q_\ell^{m_\ell} q_\ell,$ where $m_\ell$ is nonzero and $q_\ell \in \Gamma^\perp \setminus Q_\ell \Gamma^\perp$.
\end{prop}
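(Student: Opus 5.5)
The plan is to realize the cone-adapted shearlet system as a GTI system over the common lattice $\Gamma$ and then read off the conditions from Theorem~\ref{thm_characterization_orthogonal_frame}, specialized as in Proposition~\ref{prop_char_orthogonal_CSS}. The index set $\mathcal{J}$ will consist of an index $\ast$ for the low-pass part together with the triples $(\ell,j,k)$ with $\ell\in\{1,2\}$, $j\in\mathbb{N}_0$ and $|k|\le K_j$. The generators are $\phi^{(i)}$ with subgroup $\Gamma_0=\Gamma$, and $D_{P_\ell^kQ_\ell^j}\psi_\ell^{(i)}$ with subgroup $(P_\ell^kQ_\ell^j)^{-1}\Gamma$.

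Using the intertwining relation $D_AT_\lambda=T_{A^{-1}\lambda}D_A$ as in the wavelet subsection, the annihilators are $(A^{-1}\Gamma)^\perp=A^{T}\Gamma^\perp$. With the stated integrality assumption, the shears $P_\ell^T$ leave $\Gamma^\perp$ invariant. Hence the annihilator attached to $(\ell,j,k)$ is $(Q_\ell^{T})^{j}\Gamma^\perp=Q_\ell^{j}\Gamma^\perp$, since $Q_\ell$ is diagonal. On the Fourier side, $\widehat{D_Af}(\gamma)=|\det A|^{-1/2}\widehat f(A^\ast\gamma)$. The covolume factor $1/s((P_\ell^kQ_\ell^j)^{-1}\Gamma)=|\det A|/s(\Gamma)$ then cancels the two normalizing factors $|\det A|^{-1/2}$. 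After multiplying through by $s(\Gamma)$, the terms are of the form $\overline{\widehat{\psi_\ell^{(1)}}((P_\ell^\ast)^kQ_\ell^{-j}\gamma)}\,\widehat{\psi_\ell^{(2)}}((P_\ell^\ast)^kQ_\ell^{-j}(\gamma+\alpha))$.

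Next I need the hypotheses of Theorem~\ref{thm_characterization_orthogonal_frame}. The systems are Bessel by assumption. For the dual $1$-UCP, I will use that these shearlet systems satisfy the $\alpha$-LIC, as cited from \cite{LVV}. By Remark~\ref{rem_relation_between_UCP_LIC}, this gives the $\infty$-UCP and hence the $1$-UCP for one system. Since the other system is Bessel, Remark~\ref{rem_dual_UCP} then gives the dual $1$-UCP for the pair.

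Theorem~\ref{thm_characterization_orthogonal_frame} then says that orthogonality is equivalent to $t_0=0$ together with $t_\alpha=0$ for $\alpha\ne0$ in $\bigcup\Gamma_j^\perp=\Gamma^\perp$.
\begin{itemize}
\item For $t_0$, every index contributes. This gives the first displayed sum over all $j\ge0$ and $|k|\le K_j$, plus the $\phi$-term.
\item For $\alpha\neq0$, the condition $\alpha\in Q_\ell^j\Gamma^\perp$ must be decided for each $\ell$. Writing $\alpha=Q_\ell^{m_\ell}q_\ell$ with $q_\ell\notin Q_\ell\Gamma^\perp$, this holds exactly when $0\le j\le m_\ell$. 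The $\phi$-term is always present because $\Gamma_0^\perp=\Gamma^\perp$. This yields the second condition with the truncated sum $j\le m_\ell$.
\end{itemize}

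The main obstacle is this bookkeeping of the annihilator membership. It needs two facts: that the decomposition $\alpha=Q_\ell^{m_\ell}q_\ell$ exists and is unique, and that the chain $Q_\ell^{j}\Gamma^\perp$ is strictly decreasing. I would check the first by a $2$-adic valuation argument on the coordinates of $R^{\ast}$-coordinates of $\alpha$. The decomposition forces $m_\ell\ge0$ (the statement says "nonzero", which should read $m_\ell\ge 0$), and uniqueness follows from $\bigcap_j Q_\ell^j\Gamma^\perp=\{0\}$. Finally, I would verify that the signs in the arguments agree with Theorem~\ref{thm_characterization_orthogonal_frame}, i.e. that the $\alpha$-shift in the $\psi^{(2)}$ argument appears inside the dilation as $(P_\ell^\ast)^kQ_\ell^{-j}(\gamma+\alpha)$. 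If needed, I would reparametrize $\gamma$ to match the displayed form.
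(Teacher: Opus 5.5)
Your approach is the one the paper intends; the paper itself only says that Proposition~\ref{prop_char_orthogonal_CSS} ``applies directly''. You realize the cone-adapted system as a GTI system over the common lattice, obtain the dual $1$-UCP from the $\alpha$-LIC via Remarks~\ref{rem_relation_between_UCP_LIC} and~\ref{rem_dual_UCP}, and read off $t_0$ and $t_\alpha$ from Theorem~\ref{thm_characterization_orthogonal_frame}. Your cancellation of the normalising factors is correct, and so is the criterion $\alpha\in Q_\ell^j\Gamma^\perp\iff 0\le j\le m_\ell$.

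The step that fails is the last one, ``reparametrize $\gamma$ to match the displayed form''. For $\alpha\neq0$ the theorem gives
\[
\overline{\widehat{\phi^{(1)}}(\gamma)}\,\widehat{\phi^{(2)}}(\gamma+\alpha)+\sum_{\ell=1}^{2}\sum_{j=0}^{m_\ell}\sum_{k=-K_j}^{K_j}\overline{\widehat{\psi_\ell^{(1)}}\bigl((P_\ell^\ast)^kQ_\ell^{-j}\gamma\bigr)}\,\widehat{\psi_\ell^{(2)}}\bigl((P_\ell^\ast)^kQ_\ell^{-j}(\gamma+\alpha)\bigr)=0 .
\]
In each product, the argument of $\widehat{\psi_\ell^{(2)}}$ exceeds that of $\widehat{\psi_\ell^{(1)}}$ by $(P_\ell^\ast)^kQ_\ell^{-j}\alpha$. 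No substitution in $\gamma$ changes this difference, and it is not $\alpha$ when $j\ge1$. The displayed version is in fact not equivalent to orthogonality, as the following example shows.
\begin{itemize}
\item Take $\Gamma=\mathbb{Z}^2$, $K_j=0$ and $\psi_2^{(1)}=\psi_2^{(2)}=0$.
\item Let $V$ be the disc of radius $1/5$ about $(1/2,1/2)$ and $q_0=(1,1)$.
\item Set $\widehat{\psi_1^{(1)}}=\widehat{\phi^{(1)}}=\chi_V$ and $\widehat{\psi_1^{(2)}}=-\widehat{\phi^{(2)}}=\chi_{V+q_0}$.
\end{itemize}
Both systems are Bessel, because the dilates $Q_1^jV$ and $Q_1^j(V+q_0)$ are pairwise disjoint and each meets its $Q_1^j\mathbb{Z}^2$-translates trivially. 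Both displayed identities hold: the only nonzero contributions occur at $\alpha=q_0$, where $m_1=0$ and the $\phi$-term $-\chi_V$ cancels the $j=0$ term $\chi_V$. However, at $\alpha=Q_1q_0=(4,2)$ we have $m_1=1$, and the correct $t_\alpha$ equals $\chi_{Q_1V}\neq0$. So by Theorem~\ref{thm_characterization_orthogonal_frame} the systems are not orthogonal. The second display must therefore be read with the shift inside the dilation, as in Proposition~\ref{prop_char_orthogonal_CSS}. That corrected statement is exactly what your argument proves, and you should conclude with it rather than reparametrize.

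Two smaller points.
\begin{itemize}
\item The invariance $P_\ell^{T}\Gamma^\perp=\Gamma^\perp$, together with $Q_\ell\Gamma^\perp\subseteq\Gamma^\perp$, does not follow from the integrality condition in the text, which involves only $Q_\ell$. It has to be imposed, as in the composite-dilation setting; it holds for $\Gamma=\mathbb{Z}^2$. Without it the annihilators are $Q_\ell^j(P_\ell^{T})^k\Gamma^\perp$, and the membership test for $\alpha$ depends on $k$.
\item In $\alpha=Q_\ell^{m_\ell}q_\ell$ you have the two facts swapped. The condition $\bigcap_j Q_\ell^j\Gamma^\perp=\{0\}$ gives existence of a maximal $m_\ell$. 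Uniqueness comes from the nesting of the chain together with $q_\ell\notin Q_\ell\Gamma^\perp$.
\end{itemize}
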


This section provided a characterization of pairwise orthogonal frames with GTI structures, generated by translations over families of closed, co-compact subgroups of $G$. The subgroup families associated with each system may differ. As an application, we derive necessary and sufficient conditions for the orthogonality of structured systems such as Gabor, wavelet, and shearlet frames, and we also characterize GTI tight frames. Building upon these characterization results, the next section focuses on the explicit construction of pairwise orthogonal frames.

\section{Explicit Construction of Pairwise Orthogonal Parseval Frames}\label{sec4}	
	We begin in Subsection~\ref{subsec_4.1} by constructing a pair of GTI systems using filters. Next, Subsection~\ref{subsec_sufficent_condtions_for_UCP} presents sufficient conditions ensuring that the constructed GTI systems are  Parseval frames and satisfy the $\infty$-UCP with Clader\'on sum $1$. (see Theorem~\ref{thm_UCP}).  We present an explicit example based on $B$-splines as generators in $\ell^2(\Z)$. As an application of this result, Theorem \ref{thm_QEP} provides the Oblique extension principle (OEP). The subsequent analysis, found in Subsection~\ref{subsec_pairwise_orthogonal}, demonstrates that the resulting Parseval frames are pairwise orthogonal (see Theorem~\ref{thm_pairwise_orthogonal_frames}). Finally, we introduce a general method to construct $N$ pairwise orthogonal Parseval frames (see Theorem~\ref{thm_construction_of_N_pair_OF}).
	\subsection{Construction of  GTI systems via filters}\label{subsec_4.1}
To construct the GTI systems, we follow the general setup of \cite{CG2021}, incorporating some additional assumptions that will be needed in the remainder of this paper. We fix the objects introduced below throughout this section.

  The index set $\mathcal J$ may be any integer interval, i.e., \(\mathbb Z, \quad \{j_0,j_0+1,\dots\}, \quad \{j_0,\dots,j_1\}, \quad \{-\infty,\dots,j_1\},\)
	where $j_0< j_1$. Consider closed co-compact subgroups $\{\Gamma_j\}_{j \in \mathcal{J}}$ of a  LCA group $G$ such that  
	\begin{equation}\label{GTI_nested}
		\cdots \subset \Gamma_j \subset \Gamma_{j+1} \subset \Gamma_{j+2} \subset \cdots,
	\end{equation}
	where the quotient groups $\Gamma_{j+1}/\Gamma_j$ are finite with cardinality $d_j \in \mathbb{N}$. 
	The annihilators form the decreasing chain  
	\[
	\cdots \supset \Gamma_j^\perp \supset \Gamma_{j+1}^\perp \supset \Gamma_{j+2}^\perp \supset \cdots,
	\]
	satisfying $|\Gamma_{j+1}/\Gamma_j| = |\Gamma_j^\perp/\Gamma_{j+1}^\perp| = d_j$ (see \cite{HRJDS}). 
	We write $\Omega_j$ for a fundamental domain of $\Gamma_j^\perp$. Moreover, for each $j$ one can select a sequence $\{v_{j,\ell}\}_{\ell=1}^{d_j} \subset \Gamma_j^\perp$, with $v_{j,1}=0$, such that  
	\begin{equation}\label{cosetpartition}
		\Gamma^{\perp}_{j}=\bigcup \limits_{\ell=1}^{d_j}(v_{j,\ell}+ \Gamma^{\perp}_{j+1}),\,\, (v_{j,\ell}+ \Gamma^{\perp}_{j+1})\cap (v_{j,\ell'}+ \Gamma^{\perp}_{j+1})=\emptyset \, \mbox{ for}~ \ell\neq \ell'.
	\end{equation}  
	
	\noindent{\bf Generating functions and filters:} Consider a sequence $\{\Phi_j\}_{j \in \mathcal J} \subset L^2(\widehat G)$ such that, 
	for each $j$, there exists a $\Gamma_{j+1}^\perp$-periodic function $H_{j+1}\in L^\infty(\Omega_{j+1})$ with  
	\begin{equation}\label{phi_relation}
		\Phi_j(\gamma) = H_{j+1}(\gamma)\Phi_{j+1}(\gamma), \qquad \text{a.e. }\gamma \in \widehat G.
	\end{equation}
	For $i\in\{1,2\}$ and $m=1,\dots,s_j$, we define  
	\begin{equation}\label{psi_def}
		\Psi_j^{(i)(m)}(\gamma) := G_{j+1}^{(i)(m)}(\gamma)\, \Phi_{j+1}(\gamma),
	\end{equation}
	where the filters $G_{j+1}^{(i)(m)}\in L^\infty(\Omega_j)$ are assumed to be $\Gamma_j^\perp$-periodic.  The corresponding time-domain generators are  
	\begin{equation}\label{gtime}
		g_{(m,j)}^{(i)} := \mathcal F^{-1}\Psi_j^{(i)(m)}.
	\end{equation}  
	
	\noindent{\bf GTI systems and objective:} With $P_j := \{(m,j): m=1,\dots, s_j\}$, the GTI systems  
	\begin{equation}\label{GTI eq6}
		\gtione \qquad \text{and} \qquad \gtitwo
	\end{equation}
	are the principal objects of study. We intend to establish requirements on the generators $\Phi_j$ and filters $G_{j}^{(i)(m)}$ that guarantee these systems constitute \emph{pairwise orthogonal Parseval frames} in $L^2(G)$. 
	In particular, we verify that they satisfy the dual $1$-UCP, cf. Theorem \ref{thm_characterization_orthogonal_frame}.  
	
	\noindent{\bf Matrix formulation:} For each $i\in\{1,2\}$ and $j \in \mathcal J$, we introduce the matrices  
	\begin{equation}\label{matrices}
		\mathfrak B_j^{(i)}(\gamma) := 
		\big(G_{j+1}^{(i)(m)}(\gamma+v_{j,n})\big)_{\substack{0\le m \le s_j \\ 1\le n\le d_j}},
		\qquad
		\widetilde{\mathfrak B}_j^{(i)}(\gamma) := 
		\big(G_{j+1}^{(i)(m)}(\gamma+v_{j,n})\big)_{\substack{1\le m \le s_j \\ 1\le n\le d_j}},
	\end{equation}
	for almost every $\gamma\in\Omega_j$, with the convention $G_{j+1}^{(i)(0)}:=H_{j+1}$.  
	We present the argument for $\J=\mathbb Z$; the remaining cases require only minor modifications. 
	For bounded intervals, one restricts $j$ accordingly so that \eqref{phi_relation}--\eqref{matrices} are well defined.

	\subsection{Sufficient conditions for the GTI systems satisfying the dual $\infty$-UCP}
	\label{subsec_sufficent_condtions_for_UCP}

    We now state the following standing assumptions:
	\begin{itemize}
		\item[($\mathcal{N}_1$)]	For every compact set $S$ in  $\widehat{G}\setminus B$  and $\epsilon > 0,$ there exists $J_1 \in \mathcal{J}$ such that for all $j \geq J_1, \ j \in \mathcal{J},$ $$ \left|\frac{1}{s(\Gamma_{j})}|\Phi_j(\gamma)|^2-1\right| \leq \epsilon , \,\, \forall \, \gamma \in S.$$
		\item[($\mathcal{N}_2$)]
		For every compact set $S$ in $\widehat{G}\setminus B$ and $\epsilon > 0,$ there exists $J_2 \in \mathcal{J}$ such that for all $j \leq J_2, \ j \in \mathcal{J},$ $$ \frac{1}{\sqrt{s(\Gamma_{j})}}|\Phi_j(\gamma)| \leq \epsilon , \,\, \forall \, \gamma \in S.$$
	\end{itemize}
The following result provides sufficient conditions under which the GTI system
  $\displaystyle\bigcup_{j \in \mathcal{J}}\{T_{\lambda}g_p^{(i)}\}_{\lambda \in \Gamma_{j}, p \in P_j}$, defined in \eqref{GTI eq6}, satisfies the $\infty$-UCP, forms a Parseval frame, and possesses a Calderón sum equal to one.
  \begin{thm}\label{thm_UCP}
	Assume that conditions $(\mathcal{N}_1)$--$(\mathcal{N}_2)$ hold. 
In addition, suppose that  for every compact set $S \in \hg$, there exist a $J \in \J$, such that $\mu_{\widehat{G}}\big( (\omega + S) \cap (\omega' + S) \big) = 0 \mbox{ for } \omega \neq \omega'  \mbox{ and }  \omega, \omega' \in \Gamma_J^\perp.$ Further, for each $j \in \mathcal{J}$, let the matrix-valued function $\mathfrak{B}^{(i)}_j(\gamma)$ defined in \eqref{matrices} satisfy
\begin{equation}\label{PFC}
	({\mathfrak{B}^{(i)}_j}(\gamma))^{\ast} \mathfrak{B}^{(i)}_j(\gamma)=\frac{s(\Gamma_{j})}{s(\Gamma_{j+1})}I_{d_j}\,  \mbox{  for a.e. } \gamma \in \Omega_j, 
\end{equation}
where $({\mathfrak{B}^{(i)}_j}(\gamma))^{\ast}$ denotes adjoint of $\mathfrak{B}^{(i)}_j(\gamma)$. Then, for each $i \in \{1,2\}$, the GTI system $\displaystyle\bigcup_{j \in \mathcal{J}}\{T_{\lambda}g_p^{(i)}\}_{\lambda \in \Gamma_{j}, p \in P_j}$ {(defined in \eqref{GTI eq6})}
	\begin{itemize}
		\item[(i)] satisfies the $\infty$-UCP,
		\item[(ii)] is a Parseval frame for $L^2(G)$,
		\item[(iii)] has the Calder\'on sum $1$, i.e., $\sum_{j \in \mathcal{J}} \frac{1}{s(\Gamma_j)} 
		\sum_{p \in P_j} \big|\widehat{g_p^{(i)}}(w)\big|^2 = 1
		\ \text{for a.e. } w \in \widehat{G}.$
	\end{itemize}
\end{thm}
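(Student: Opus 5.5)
Under \eqref{PFC} the refinement steps telescope, so the proof is a telescoping (unitary-extension) argument. Everything reduces to one local identity, obtained from \eqref{PFC}, for each pair of consecutive scales $j,j+1$. After that, $(\mathcal N_1)$ and $(\mathcal N_2)$ control the two ends of the chain.

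\textbf{Step 1 (local identity).} Fix $f\in\mathcal D_B$ and write $\widetilde{\Phi}_j:=\mathcal F^{-1}\Phi_j$. Set
\[
w_{f;\Phi_j}(x)=\int_{\Gamma_j}|\langle T_xf,T_\lambda\widetilde{\Phi}_j\rangle|^2\,d\mu_{\Gamma_j}(\lambda).
\]
I will show that
\[
w_{f;\Phi_j}(x)+\sum_{m=1}^{s_j} w_{f;g^{(i)}_{(m,j)},j}(x)=w_{f;\Phi_{j+1}}(x)\quad\text{for all }x\in G .
\]
To do this, I compute each term by periodization: Plancherel plus Weil's formula \eqref{eq_weil_integral_formula_2} give
\[
\int_{\Gamma_j}|\langle h,T_\lambda g\rangle|^2\,d\mu_{\Gamma_j}(\lambda)=\frac{1}{s(\Gamma_j)}\int_{\Omega_j}\Big|\sum_{\omega\in\Gamma_j^\perp}\widehat h(\gamma+\omega)\overline{\widehat g(\gamma+\omega)}\Big|^2d\gamma .
\]
Since $f\in\mathcal D_B$, these sums are finite. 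Next I substitute $\Psi^{(i)(m)}_j=G^{(i)(m)}_{j+1}\Phi_{j+1}$ and $\Phi_j=H_{j+1}\Phi_{j+1}$. I split $\Gamma_j^\perp$ into the cosets $v_{j,n}+\Gamma_{j+1}^\perp$ from \eqref{cosetpartition}, using that the filters are $\Gamma_j^\perp$-periodic and $H_{j+1}$ is $\Gamma_{j+1}^\perp$-periodic. The left-hand side then becomes $\frac{1}{s(\Gamma_j)}\int_{\Omega_j}\|\mathfrak B^{(i)}_j(\gamma)\,u(\gamma)\|^2$, where $u_n(\gamma)=\sum_{\omega\in\Gamma_{j+1}^\perp}\widehat{T_xf}\,\overline{\Phi_{j+1}}(\gamma+v_{j,n}+\omega)$. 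By \eqref{PFC} this equals $\frac{1}{s(\Gamma_{j+1})}\int_{\Omega_j}\|u\|^2$. Unfolding $\Omega_j$ into $\Omega_{j+1}$ via the $d_j$ cosets gives exactly $w_{f;\Phi_{j+1}}(x)$.

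\textbf{Step 2 (telescoping and uniform limits).} Summing the identity over $J_1\le j\le J_2$ gives
\[
\sum_{j=J_1}^{J_2} w_{f;g^{(i)},j}(x)=w_{f;\Phi_{J_2+1}}(x)-w_{f;\Phi_{J_1}}(x).
\]
I then need two uniform limits in $x$. First, $w_{f;\Phi_{J_1}}\to0$ uniformly as $J_1\to-\infty$. For this I bound it by Cauchy--Schwarz using the finite-overlap count of $\operatorname{supp}\widehat f+\Gamma_J^\perp$, together with $(\mathcal N_2)$ on the compact set $S=\operatorname{supp}\widehat f$; note that $|\widehat{T_xf}|=|\widehat f|$, so the bound does not depend on $x$. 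Second, $w_{f;\Phi_{J_2+1}}(x)\to\|f\|^2$ uniformly as $J_2\to\infty$. For this I use the hypothesis that translates of $S$ by $\Gamma_J^\perp$ are a.e.\ disjoint for large $J$. Then only the diagonal term survives in the periodization, and the expression equals $\int |\widehat f|^2\,|\Phi_J|^2/s(\Gamma_J)$, which converges to $\|f\|^2$ uniformly in $x$ by $(\mathcal N_1)$. Together these prove that $\sum_j w_{f;g^{(i)},j}$ converges uniformly to the constant $\|f\|^2$. Because each partial sum is a difference of two terms of a convergent sequence, the convergence is unconditional for the ordered index set. This gives (i).

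\textbf{Step 3 (conclusions).} Evaluating at $x=0$ gives $\sum_j\sum_p\int_{\Gamma_j}|\langle f,T_\lambda g_p^{(i)}\rangle|^2=\|f\|^2$ on the dense set $\mathcal D_B$. A standard density argument then gives the Parseval property on all of $L^2(G)$, which is (ii). For (iii), I apply Theorem~\ref{thm_char_Parseval_frames} with $K=1$ and $\alpha=0$; the $1$-UCP it requires holds by (i) and Remark~\ref{rem_relation_between_UCP_LIC}. Alternatively, the same telescoping can be done pointwise on $|\Psi|^2$: the first column of \eqref{PFC} gives $|H_{j+1}|^2+\sum_m|G^{(i)(m)}_{j+1}|^2=s(\Gamma_j)/s(\Gamma_{j+1})$, and combining this with $(\mathcal N_1)$ and $(\mathcal N_2)$ yields the Calder\'on sum directly.

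I expect Step 1 to be the main obstacle. The coset bookkeeping must be exactly right, in particular the normalization factor $s(\Gamma_j)/s(\Gamma_{j+1})$, and so must the passage from $\Omega_j$ to $\Omega_{j+1}$.
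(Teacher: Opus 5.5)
Your outline follows the paper's own proof. Step 1 is Lemma~\ref{Lemma_first_for_UCP}, Step 2 is Lemma~\ref{lem_2_UCP}, and Step 3 is density plus Theorem~\ref{thm_char_Parseval_frames}. Step 1, the upper-end limit and the density argument are fine. One small correction: unconditionality comes from $w_{f;g_p^{(i)},j}\ge 0$, not from the telescoping form.

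The genuine gap is the lower-end limit $w_{f;\Phi_J}\to 0$ as $J\to-\infty$. Your Cauchy--Schwarz estimate gives
\[
w_{f;\Phi_J}(x)\le \frac{N_J}{s(\Gamma_J)}\int_S |\widehat f|^2|\Phi_J|^2\,d\mu_{\widehat G}\le N_J\,\sup_{\gamma\in S}\frac{|\Phi_J(\gamma)|^2}{s(\Gamma_J)}\,\|f\|^2,
\qquad N_J:=\operatorname*{ess\,sup}_{\gamma\in\widehat G}\#\{\omega\in\Gamma_J^\perp:\gamma+\omega\in S\}.
\]
However, $\Gamma_J^\perp$ gets finer as $J\to-\infty$, so $N_J$ is unbounded. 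For $\Gamma_j=2^{-j}\mathbb Z\subset\mathbb R$ and an interval $S$ one has $N_J\approx 2^{-J}|S|=s(\Gamma_J)|S|$. The bound is then of order $|S|\sup_S|\Phi_J|^2\|f\|^2$. Hypothesis $(\mathcal N_2)$ only gives $\sup_S|\Phi_J|^2=o(s(\Gamma_J))$, which says nothing about this quantity. The paper's proof of Lemma~\ref{lem_2_UCP}(i) has the same hole. It claims that on $S_{j,\omega'}$ only the term $\omega=\omega'$ contributes, but that non-overlap holds only for $j$ beyond the level $J$ of the disjointness hypothesis.

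The step cannot be repaired under the stated hypotheses; here is a counterexample.

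\emph{Setup.} Let $G=\mathbb R$ and $\Gamma_j=2^{-j}\mathbb Z$, so $s(\Gamma_j)=2^{-j}$ and $\Gamma_j^\perp=2^j\mathbb Z$. Put $\varphi:=\mathcal F^{-1}\chi_{[-1/2,1/2)}$.
\begin{itemize}
\item For $j\ge 0$ take Shannon data: $\Phi_j=2^{-j/2}\chi_{[-2^{j-1},2^{j-1})}$, $H_{j+1}=\sqrt2\,\chi_{[-2^{j-1},2^{j-1})}$ and $G_{j+1}=\sqrt2-H_{j+1}$ on $[-2^j,2^j)$, both extended $2^{j+1}$-periodically.
\item For $j<0$ take $\Phi_j=\chi_{[-1/2,1/2)}$, $H_{j+1}\equiv 1$, and one high-pass filter $G_{j+1}(\gamma)=e^{-2\pi i 2^{-j-1}\gamma}$.
\end{itemize}
All filters are $\Gamma_{j+1}^\perp$-periodic, as in Example~\ref{ex_B_spline}, and that is all your Step~1 uses.

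\emph{Hypotheses hold.} Condition \eqref{PFC}, $(\mathcal N_1)$, the disjointness hypothesis, and $(\mathcal N_2)$ (since $2^{j/2}|\Phi_j|\le 2^{j/2}$) are all satisfied.

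\emph{The limit fails.} For every $j\le 0$,
\[
w_{f;\Phi_j}(0)=\sum_{n\in\mathbb Z}|\langle f,T_{2^{-j}n}\varphi\rangle|^2\ge|\langle f,\varphi\rangle|^2 .
\]
In fact the levels $j<0$ contribute exactly $\{T_k\varphi\}_{k\in\mathbb Z\setminus\{0\}}$. The whole system is therefore $\{T_k\varphi\}_{k\neq 0}$ together with the Shannon wavelets of levels $j\ge 0$, i.e.\ an orthonormal basis of $L^2(\mathbb R)$ with $\varphi$ removed. It is not a Parseval frame, and (i) fails as well, yet its Calder\'on sum equals $1$.

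Your pointwise telescoping argument for (iii) is sound. Conclusions (i)--(ii), however, need a stronger hypothesis at $-\infty$, for example $\sup_{\gamma\in S}|\Phi_j(\gamma)|\to 0$ as $j\to-\infty$. With it, your counting argument closes. Take $J$ to be the disjointness level and use compatible Haar measures. Then $N_j\le[\Gamma_j^\perp:\Gamma_J^\perp]=s(\Gamma_j)/s(\Gamma_J)$, which gives $w_{f;\Phi_j}\le \|f\|^2\sup_S|\Phi_j|^2/s(\Gamma_J)\to 0$.
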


    Before we proceed to prove the above theorem, we first require the following two Lemmas~\ref{Lemma_first_for_UCP} and \ref{lem_2_UCP}, which are essential for establishing our results. These lemmas are inspired by the proof of the unitary extension principle for $L^2(\mathbb{R})$. Specifically, a similar versions of  Lemmas~\ref{Lemma_first_for_UCP} and  \ref{lem_2_UCP}(ii)  also appear in \cite{CG2021}, while part (i) and (iii) of Lemma~\ref{lem_2_UCP} are presented here for the first time in the setting of LCA groups, as a generalization from the $L^2(\mathbb{R})$ setting.

Before stating the lemmas, we define the function $\wphij$ by
$$\wphij(x):= \dwphij$$	
 for each $j \in \mathcal{J}$ and $x \in G$.
\begin{lem}\label{Lemma_first_for_UCP}
	Let $i \in \{1, 2\}$, and consider the GTI system $\gtii$ defined in~\eqref{GTI eq6}. Suppose that, for some integers $j_0, J$ with $j_0 \leq J$, the matrix valued functions $\mathfrak{B}^{(i)}_j(\gamma)$, defined in~\eqref{matrices}, satisfy
	\begin{equation*}
		({\mathfrak{B}^{(i)}_j}(\gamma))^{\ast} \mathfrak{B}^{(i)}_j(\gamma)=\frac{s(\Gamma_{j})}{s(\Gamma_{j+1})}I_{d_j}\quad  \mbox{for a.e. } \gamma \in \Omega_j,  
	\end{equation*}
	for all $j = j_0, \ldots, J$. Then the following identity holds:
	\begin{align*}
		\displaystyle\sum_{j=j_0}^{J}\wgpij(x)=\wphiJ(x)-\wphijnote(x) \quad \mbox{for } x \in G,
	\end{align*}
	where $\wgpij$ is defined in \eqref{eq_wfgi}.
\end{lem}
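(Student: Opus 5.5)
The identity telescopes, so it suffices to prove the single-level relation
\[
w_{f;g_p^{(i)},j}(x)=w_{f;\Phi_{j+1}}(x)-w_{f;\Phi_j}(x)\qquad (x\in G)
\]
for each $j=j_0,\dots,J$, and then sum over $j$. Since $T_x f$ is again in the relevant class, the variable $x$ plays no role in the argument; it is enough to prove the relation at $x=0$ with $f$ replaced by $T_xf$.

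For a generator $h=\mathcal F^{-1}\Psi$ and a closed co-compact subgroup $\Gamma_j$, I will first express $\int_{\Gamma_j}|\langle f,T_\lambda h\rangle|^2\,d\mu_{\Gamma_j}(\lambda)$ in the Fourier domain. By Plancherel, $\langle f,T_\lambda h\rangle=\int_{\widehat G}\widehat f(\gamma)\overline{\Psi(\gamma)}\,\gamma(\lambda)\,d\mu_{\widehat G}(\gamma)$. Periodizing over $\Gamma_j^\perp$ with \eqref{eq_weil_integral_formula_2} writes this as the Fourier coefficient of the $\Gamma_j^\perp$-periodic function $F_j(\gamma)=\sum_{\omega\in\Gamma_j^\perp}\widehat f(\gamma+\omega)\overline{\Psi(\gamma+\omega)}$ on $\widehat G/\Gamma_j^\perp\cong\widehat{\Gamma_j}$. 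Plancherel on $\Gamma_j$ (with the normalization producing the factor $1/s(\Gamma_j)$, as in \cite{JL2016,CG2021}) then gives
\[
\int_{\Gamma_j}|\langle f,T_\lambda h\rangle|^2\,d\mu_{\Gamma_j}=\frac{1}{s(\Gamma_j)}\int_{\Omega_j}\Big|\sum_{\omega\in\Gamma_j^\perp}\widehat f(\gamma+\omega)\overline{\Psi(\gamma+\omega)}\Big|^2 d\mu_{\widehat G}(\gamma).
\]

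Next I insert the filter relations \eqref{phi_relation} and \eqref{psi_def}, writing $\Psi=G^{(i)(m)}_{j+1}\Phi_{j+1}$ for $m\ge1$ and $\Phi_j=H_{j+1}\Phi_{j+1}=G^{(i)(0)}_{j+1}\Phi_{j+1}$. Using \eqref{cosetpartition}, split $\omega=v_{j,n}+\omega'$ with $\omega'\in\Gamma_{j+1}^\perp$. Since each $G^{(i)(m)}_{j+1}$ is $\Gamma_j^\perp$-periodic, the sum becomes
\[
\sum_{n=1}^{d_j}\overline{G^{(i)(m)}_{j+1}(\gamma+v_{j,n})}\,a_n(\gamma),\qquad a_n(\gamma)=\sum_{\omega'\in\Gamma_{j+1}^\perp}\widehat f(\gamma+v_{j,n}+\omega')\overline{\Phi_{j+1}(\gamma+v_{j,n}+\omega')}.
\]
Summing over $m=0,\dots,s_j$ gives $\frac{1}{s(\Gamma_j)}\int_{\Omega_j}\|\overline{\mathfrak B^{(i)}_j(\gamma)}a(\gamma)\|^2$, where $a=(a_n)_n$. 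By \eqref{PFC}, which also holds for the conjugated matrix, this equals $\frac{1}{s(\Gamma_{j+1})}\int_{\Omega_j}\sum_n|a_n(\gamma)|^2$. Because $\bigcup_n(v_{j,n}+\Omega_j)$ is a fundamental domain for $\Gamma_{j+1}^\perp$, and $a_n(\gamma)$ is the $\Gamma_{j+1}^\perp$-periodization evaluated at $\gamma+v_{j,n}$, this is exactly $w_{f;\Phi_{j+1}}(0)$. The $m=0$ term is $w_{f;\Phi_j}(0)$, and the terms with $m\ge1$ are $w_{f;g_p^{(i)},j}(0)$, which yields the single-level relation.

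I expect the main technical obstacle to be justifying the periodization and Fubini steps, namely a.e. convergence and $L^2(\Omega_j)$ membership of these series. I would handle this by first taking $f\in\mathcal D_B$, so that $\widehat f$ is bounded with compact support. The sums over $\omega$ are then locally finite, and $\widehat f\,\overline{\Psi}\in L^1$ because $\Phi_{j+1}\in L^2$ and the filters are bounded. The convention that $s(\Gamma_j)$ is the correct normalization constant should also be checked against Weil's formula.
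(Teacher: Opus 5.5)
Your proposal is correct and follows the same route as the paper: you prove the one-level identity $w_{f;g_p^{(i)},j}=w_{f;\Phi_{j+1}}-w_{f;\Phi_j}$ from the matrix condition and then telescope, and you carry out explicitly the periodization/UEP computation that the paper only cites from \cite[Lemma~3.1]{CG2021}. One small point: when you split $\omega=v_{j,n}+\omega'$, the property you actually use is $\Gamma_{j+1}^\perp$-periodicity of the filters, which is what holds for $H_{j+1}$ and in Example~\ref{ex_B_spline}. Genuine $\Gamma_j^\perp$-periodicity, as literally written in the setup, would make all columns of $\mathfrak{B}^{(i)}_j(\gamma)$ equal and \eqref{PFC} impossible for $d_j\ge 2$. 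Since $\Gamma_j^\perp$-periodicity implies $\Gamma_{j+1}^\perp$-periodicity, your step is valid either way.
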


\bp
	Since the Fourier transform preserves norms, we have
	\begin{align}
		\wgponejnote (x)&=\dwgponejnote \nonumber \\
		&=\Fdwgponejnote \nonumber\\
		&=\FEdwgponejnote. \label{eq_0_lem}
	\end{align}
	Substituting  $P_{j_0}=\{(m,j_0): m=1,2,\ldots, s_{j_0}\}$  and using the expression for $\widehat{\gpi}$ in the right-hand side of (\ref{eq_0_lem}),  we obtain
	\begin{align}
		\wgponejnote(x) &=\displaystyle\sum\limits_{m=1}^{s_{j_0}} \displaystyle\int\limits_{\Gamma_{j_0}} \left| \langle \F(T_x f), \Mg \Psi^{(i)(m)}_{j_0} \rangle \right|^2 \ d\mu_{\Gamma_{j_0}}(\la).
	\end{align}
	Similarly, the functions $\wphijnote(x)$ and $\wphijnoteplusone(x)$ can be expressed as 
	\begin{align}
		\wphijnote(x)=& \displaystyle\int\limits_{\Gamma_{j_0}} \left| \langle \F(T_x f), \Mg \Phi_{j_0} \rangle \right|^2 \ d\mu_{\Gamma_{j_0}}(\la)
	\end{align}
	and
	\begin{align}
		\wphijnoteplusone(x)=& \displaystyle\int\limits_{\Gamma_{j_0+1}} \left| \langle \F(T_x f), \Mg \Phi_{j_0+1} \rangle \right|^2 \ d\mu_{\Gamma_{j_0+1}}(\la).
	\end{align}
	Now using the matrix condition $({\mathfrak{B}^{(i)}_{j_0}}(\gamma))^{\ast} \mathfrak{B}^{(i)}_{j_0}(\gamma)=\frac{s(\Gamma_{j_0})}{s(\Gamma_{j_0+1})}I_{d_{j_0}}\,\,  \mbox{  for a.e. } \gamma \in \Omega_{j_0}$ and following a similar argument as in \cite[Lemma $3.1$]{CG2021}, we obtain 
	\begin{align*}
		\displaystyle\sum\limits_{m=1}^{s_{j_0}} \int\limits_{\Gamma_{j_0}} \left| \langle \F(T_x f), \Mg \Psi^{(i)(m)}_{j_0} \rangle \right|^2 \ d\mu_{\Gamma_{j_0}}(\la)
		=&\int\limits_{\Gamma_{j_0+1}} \left| \langle \F(T_x f), \Mg \Phi_{j_0+1} \rangle \right|^2 \ d\mu_{\Gamma_{j_0+1}}(\la)\\
		&- \int\limits_{\Gamma_{j_0}} \left| \langle \F(T_x f), \Mg \Phi_{j_0} \rangle \right|^2 \ d\mu_{\Gamma_{j_0}}(\la).
	\end{align*}
	Therefore, we conclude that $\wgponejnote(x)=\wphijnoteplusone(x)-\wphijnote(x)$. Applying the same reasoning recursively for $j \in \{ j_0 + 1, \ldots, J \}$, we obtain
	\begin{align*}
		\sum_{j = j_0}^{J} \wgpij(x) 
		=&\left( \wphijnoteplusone(x) - \wphijnote(x) \right) + \left( \wphijnoteplustwo(x) - \wphijnoteplusone(x) \right) 
	+ \cdots + \left( \wphiJ(x) - \wphiJminusone(x) \right) \\
		= &\wphiJ(x) - \wphijnote(x).
	\end{align*}
	This completes the proof.
\ep

\begin{lem}\label{lem_2_UCP}
	Under the assumptions of Theorem \ref{thm_UCP}, for any $f \in \mathcal{D}_B$, $x \in G$ and given $\epsilon > 0$, the following statements hold:
	\begin{itemize}
		\item[(i)] There exists an integer $J_2 \in \Z$ such that for all $j \in \J$ with $j \leq J_2$, we have
		\begin{align*}
			\wphij(x) \leq \epsilon \norm{f}^2.
		\end{align*}
		
		\item[(ii)] There exists an integer $J_1 \in \Z$ such that for all $j \in \J$ with $j \geq J_1$, we have
		\begin{align*}
			(1 - \epsilon)\|f\|^2 \leq \wphij(x) \leq (1 + \epsilon)\|f\|^2.
		\end{align*}
		\item[(iii)]  	Moreover, the following identity holds:
		\[
		\sum_{j \in \Z} \wgpij (x)= \|f\|^2.
		\]
	\end{itemize}
\end{lem}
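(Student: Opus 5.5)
Everything rests on the periodization formula for $\wphij$. Writing $\widehat{T_x f}=\overline{x(\cdot)}\widehat f$ and applying Weil's formula on $\Gamma_j$ together with Plancherel on $\widehat{G}/\Gamma_j^\perp$, I obtain for $f\in\mathcal{D}_B$
\begin{align*}
\wphij(x)=\frac{1}{s(\Gamma_j)}\sum_{\omega\in\Gamma_j^\perp}\overline{\omega(x)}\int\limits_{\widehat G}\widehat f(\gamma)\overline{\widehat f(\gamma+\omega)}\,\overline{\Phi_j(\gamma)}\Phi_j(\gamma+\omega)\,d\mu_{\widehat G}(\gamma).
\end{align*}
Only finitely many $\omega$ contribute, because $S:=\operatorname{supp}\widehat f$ is compact. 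This is the same computation that yields the Fourier coefficients $d_{P_j,\alpha}$ in Lemma~\ref{lem_Fourier_cofficents}, now with $P_j$ a singleton. The crude bound is obtained by Cauchy--Schwarz, using that each $\gamma\in S$ meets at most finitely many translates $S-\omega$:
\begin{align*}
|\wphij(x)|\le \sup_{\gamma\in S}\frac{|\Phi_j(\gamma)|^2}{s(\Gamma_j)}\cdot N_j(S)\,\|f\|^2 .
\end{align*}
Here $N_j(S)$ is the number of $\omega\in\Gamma_j^\perp$ with $(S+\omega)\cap S\neq\emptyset$.

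For (i) I have to control $N_j(S)$ as $j\to-\infty$, where $\Gamma_j^\perp$ grows. The cleaner route avoids counting and estimates $\wphij(x)$ as a bracket-type integral:
\begin{align*}
\wphij(x)\le \frac{1}{s(\Gamma_j)}\int\limits_{\widehat G}|\widehat f(\gamma)|\,|\Phi_j(\gamma)|\sum_{\omega}|\widehat f(\gamma+\omega)||\Phi_j(\gamma+\omega)|\,d\mu_{\widehat G}(\gamma).
\end{align*}
Apply Cauchy--Schwarz in the pair $(\gamma,\omega)$, and then $|\Phi_j|^2/s(\Gamma_j)\le\epsilon^2$ on $S$ by $(\mathcal N_2)$. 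Since $\widehat f$ vanishes off $S$, the $\omega$-sum collapses after the change of variables $\gamma\mapsto\gamma-\omega$. This gives $\wphij(x)\le \epsilon^2\|f\|^2$, with no counting needed. I will set things up so that exactly this symmetric Cauchy--Schwarz step works; $\epsilon^2$ can be replaced by $\epsilon$.

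For (ii) I use the extra hypothesis of Theorem~\ref{thm_UCP}: there is a $J$ such that the translates $\omega+S$, $\omega\in\Gamma_J^\perp$, overlap only on null sets. Since the annihilators decrease, the same holds for every $j\ge J$, because $\Gamma_j^\perp\subset\Gamma_J^\perp$. For such $j$ every term with $\omega\ne0$ vanishes. Indeed, $\widehat f(\gamma)\overline{\widehat f(\gamma+\omega)}$ is supported in $S\cap(S-\omega)$, which is null. Hence
\begin{align*}
\wphij(x)=\int\limits_{S}|\widehat f|^2\,\frac{|\Phi_j|^2}{s(\Gamma_j)}\,d\mu_{\widehat G}.
\end{align*}
Taking $J_1\ge J$ as in $(\mathcal N_1)$ yields $(1-\epsilon)\|f\|^2\le\wphij(x)\le(1+\epsilon)\|f\|^2$. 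I expect the main obstacle to be the careful justification of the periodization formula: the interchange of sum and integral, and the claim that the sum is finite. This is handled as in \cite{CG2021} and \cite{JL2016}, since $\widehat f\Phi_j\in L^1\cap L^2$ on the compact set $S$.

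For (iii), apply Lemma~\ref{Lemma_first_for_UCP} with $j_0\le J_2$ and $J+1\ge J_1$. This gives $\sum_{j=j_0}^{J}\wgpij(x)=\wphiJ(x)-\wphijnote(x)$, and by (i) and (ii) this lies within $2\epsilon\|f\|^2$ of $\|f\|^2$. The terms $\wgpij\ge0$, so the partial sums over the integer intervals converge monotonically. Letting $j_0\to-\infty$ and $J\to\infty$ and then $\epsilon\to0$ gives $\sum_{j\in\Z}\wgpij(x)=\|f\|^2$. This holds for every $x$, and hence uniformly in $x$.
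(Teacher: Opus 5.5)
Your proof of (ii) and the telescoping step in (iii) follow the paper: only the diagonal term survives for $j\ge J$ by the non-overlap hypothesis, and (iii) then comes from Lemma~\ref{Lemma_first_for_UCP}. Part (i), however, has a genuine gap, and (iii) inherits it.

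The claimed ``collapse'' after Cauchy--Schwarz in $(\gamma,\omega)$ does not happen. Put $u=|\widehat f|\,|\Phi_j|$, which vanishes off $S$. Your Cauchy--Schwarz step gives
$\sum_{\omega\in\Gamma_j^\perp}\int_{\widehat G}u(\gamma)u(\gamma+\omega)\,d\mu_{\widehat G}(\gamma)\le\int_{\widehat G}u(\gamma)^2N_j(\gamma)\,d\mu_{\widehat G}(\gamma)$, where $N_j(\gamma)=\#\{\omega\in\Gamma_j^\perp:\gamma+\omega\in S\}$. The change of variables $\gamma\mapsto\gamma-\omega$ only turns a sum of integrals over a fundamental domain $\Omega_j$ into one integral over $\widehat G$. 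Here the integrals are already over $\widehat G$, so nothing collapses.

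You are therefore back at your crude bound $\epsilon^2\sup_\gamma N_j(\gamma)\,\|f\|^2$. The count $N_j$ grows without bound as $\Gamma_j^\perp$ becomes dense; in Euclidean settings it is of order $s(\Gamma_j)\,\mu_{\widehat G}(S)$. That growth can cancel exactly the smallness of $|\Phi_j|^2/s(\Gamma_j)$ that $(\mathcal N_2)$ provides. The crude bound does prove (i) when $\sup_j N_j(S)<\infty$, for instance when $\Gamma_j$ is constant below some level, which is the stationary setting of \cite{RGS}.

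The gap cannot be closed from the stated hypotheses, because (i) is false under them. Take $G=\mathbb{R}$, $\mathcal J=\mathbb{Z}$ and $\Gamma_j=2^{-j}\mathbb{Z}$, so that $\Gamma_j^\perp=2^j\mathbb{Z}$, $s(\Gamma_j)=2^{-j}$, $d_j=2$ and $v_{j,2}=2^j$. Let $h=\chi_{[0,1)}$. Set $\Phi_j=\widehat h$ for $j\le 0$ and $\Phi_j(\gamma)=2^{-j/2}\widehat h(2^{-j}\gamma)$ for $j\ge 0$, with one filter per level:
\begin{itemize}
\item for $j\le -1$: $H_{j+1}\equiv 1$ and $G_{j+1}(\gamma)=e^{2\pi i 2^{-j-1}\gamma}$;
\item for $j\ge 0$: the Haar filters $H_{j+1},G_{j+1}=(1\pm e^{-2\pi i 2^{-j-1}\gamma})/\sqrt2$.
\end{itemize}
Then \eqref{PFC} holds at every level. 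Condition $(\mathcal N_1)$ holds because $|\widehat h(2^{-j}\gamma)|^2\to 1$ locally uniformly, and $(\mathcal N_2)$ holds because $2^{j/2}|\widehat h|\le 2^{j/2}$. The non-overlap condition is clear.

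Yet for $j\le 0$ we have $w_{f;\Phi_j}(0)=\sum_{k\in\mathbb{Z}}|\langle f,T_{2^{-j}k}h\rangle|^2\ge|\langle f,h\rangle|^2$. This stays bounded away from $0$ for any $f\in\mathcal{D}_B$ with $\langle f,h\rangle\neq 0$, and such $f$ exist by density. In fact the generated system is the Haar system $\{2^{j/2}\psi(2^j\cdot-k)\}_{j\ge 0,\,k\in\mathbb{Z}}$ together with $\{T_nh\}_{n\neq 0}$. This is an orthonormal system that misses $h$, so (iii) and Theorem~\ref{thm_UCP}(ii) fail as well.

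The paper's own proof of (i) has the same defect. It asserts that on $S_{j,\omega'}$ only the term $\omega=\omega'$ contributes. That is the non-overlap property, which is available only for $j\ge J$.

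What (i) needs is a hypothesis on the local periodization, for example
\[
\operatorname{ess\,sup}_{\gamma\in\widehat G}\frac{1}{s(\Gamma_j)}\sum_{\omega\in\Gamma_j^\perp}|\Phi_j(\gamma+\omega)|^2\chi_S(\gamma+\omega)\to 0 \quad\text{as } j\to-\infty .
\]
Cauchy--Schwarz in $\omega$ inside the $\Omega_j$-integral then gives $w_{f;\Phi_j}(x)\le\|f\|^2$ times this quantity. In the example above the quantity is a Riemann sum that tends to $\int_S|\widehat h|^2\,d\mu_{\widehat G}$, not to $0$.
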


\bp
	By following a similar approach as in Lemma \ref{Lemma_first_for_UCP}, we can express $\wphij(x)$ as  
	\begin{align}\label{eq_1_lemma_2_UCP}
		\wphij(x)=& \int\limits_{\Gamma_{j}} \left| \langle \F(T_x f), \Mg \Phi_{j} \rangle \right|^2 \ d\mu_{\Gamma_{j}}(\la). 
	\end{align}
	Let us define $S:= \mbox{supp} \ \F(T_x f).$ Since $\widehat{T_x f}(\gamma)=\gamma(-x)\widehat f(\gamma)$, the support $S$ is independent of $x$ and equals $\operatorname{supp}\widehat f$. For $j\in \J$ and $\omega \in \Gjp$, we define 
	\[
	S_{j,\omega} := \{ \gamma \in \Omega_j : \omega + \gamma \in S \}.
	\]
	Note that the set $S_{j,\omega}$ is measurable and satisfies $S_{j,\omega} = \Omega_j \cap (S - \omega)$. Moreover, using the tiling property in~\eqref{eq_1_intro}, we obtain
\[
S = \bigcup_{\omega \in \Gamma_j^\perp} (\omega + S_{j,\omega}),
\]
which gives a disjoint decomposition up to a set of measure zero.
	Now, by \cite[Proposition 2.2]{CG2021}, the right-hand side of~\eqref{eq_1_lemma_2_UCP} can be rewritten as: 
	\begin{align*}
		\wphij(x) = \frac{1}{s(\Gj)} \int\limits_{\Omega_j}  
		\left| \sum\limits_{\omega \in \Gjp} \F(T_x f)(\omega + \gamma) \overline{\Phi_j(\omega + \gamma)}  \right|^2 \, d\mu_G(\gamma).
	\end{align*}
	Observe that in the integral above, nonzero terms appear only for $\gamma \in \Omega_j$ whenever there exists some $\omega' \in \Gjp$ with $\omega' + \gamma \in S$. In other words, the contributions are restricted to 
\(\gamma \in \bigcup_{\omega' \in \Gjp} S_{j,\omega'}.\)
	Hence, we can write	  
	\begin{align}
		\wphij(x)
		&= \frac{1}{s(\Gj)}  \int\limits_{\left[ \bigcup_{\omega' \in \Gjp} S_{j,\omega'} \right]}
		\left| \sum\limits_{\omega \in \Gjp} \F(T_x f)(\omega + \gamma) \overline{\Phi_j(\omega + \gamma)} \right|^2 \, d\mu_G(\gamma) \nonumber\\
		&\leq \frac{1}{s(\Gj)} \sum_{\omega' \in \Gjp} \int\limits_{S_{k,\omega'} }
		\left| \sum\limits_{\omega \in \Gjp} \F(T_x f)(\omega + \gamma) \overline{\Phi_j(\omega + \gamma)} \right|^2 \, d\mu_G(\gamma).  \label{eq_2_lemma_2_UCP}
	\end{align}
We	observe that in \eqref{eq_2_lemma_2_UCP}, the summation over $\omega$, for a fixed $\omega' \in \Gjp$, only the term with $\omega = \omega'$ contributes within $S_{j,\omega'}$. Thus, the expression simplifies to
	\begin{align}
		\wphij(x) &\leq \frac{1}{s(\Gj)} \sum_{\omega' \in \Gjp} \int\limits_{S_{k,\omega'} }
		\left|  \F(T_x f)(\omega' + \gamma) \overline{\Phi_j(\omega' + \gamma)} \right|^2 \, d\mu_G(\gamma) \nonumber\\
		&=\frac{1}{s(\Gj)} \sum_{\omega' \in \Gjp} \int\limits_{\omega' +S_{k,\omega'} }
		\left|  \F(T_x f)( \gamma) \overline{\Phi_j( \gamma)} \right|^2 \, d\mu_G(\gamma) \nonumber\\
		&=\frac{1}{s(\Gamma_{j})} \int\limits_{S} \left | \F(T_x f)(\gamma) \Phi_{j}(\gamma) \right |^2 \ d \mu_{\hg}(\gamma).
	\end{align}
	Now, using the assumption $(\mathcal{N}_2)$, for a given $\epsilon > 0$ there exists $J_2 \in \mathcal{J}$ such that for all $j \in \mathcal{J}$ with $j \leq J_2$, we have
	\begin{align*}
		\wphij(x) \leq \epsilon \int\limits_{S} \left | \F(T_x f)(\gamma)  \right |^2 \ d \mu_{\hg}(\gamma)=\epsilon \norm{\F(T_xf)}^2= \epsilon \norm{f}^2.
	\end{align*}
	This completes the proof of part (i).
	
	By assumption, for a compact set $S$ in $\widehat{G}$, there exists $J \in \mathbb{Z}$ such that for all $\omega, \omega' \in \Gamma_J^\perp$ with $\omega \neq \omega'$, 
	\[
	\mu_{\hg}\big( (\omega + S) \cap (\omega' + S) \big) = 0.
	\]  
	Choose $J_1 \in \mathbb{Z}$ such that $J_1 > J$ and the assumption $(\mathcal{N}_1)$ is satisfied for all $j \geq J_1$. Then, following similar steps as in the proof of \cite[Lemma~3.2]{CG2021}, we obtain
	\begin{align*}
		\wphij(x)=\frac{1}{s(\Gamma_{j})} \int\limits_{S} \left | \F(T_x f)(\gamma) \Phi_{j}(\gamma) \right | \ d \mu_{\hg}(\gamma) \mbox{ for all } j \geq J_1.
	\end{align*}
	Now, using the choice of $J_1$ and the assumption $(\mathcal{N}_1)$, it follows that
	\[	(1 - \epsilon)  \norm{\F(T_x f)}^2 	\leq \wphij(x)	\leq (1 + \epsilon)  \norm{\F(T_x f)}^2 \mbox{ for all } j \geq J_1.	\]
	Since $\norm{\F(T_x f)}=\norm{T_xf}=\norm{f}$, the proof of part (ii) is complete.
	
	Now, by Lemma \ref{Lemma_first_for_UCP}, we have 
	\begin{align*}
		\displaystyle\sum_{j=j_0}^{J}\wgpij(x)=\wphiJ(x)-\wphijnote(x).
	\end{align*}
	Taking the limit as $j_0 \to -\infty$ on both sides and using part (i), which gives $\wphijnote(x) \to 0$ as $j_0 \to -\infty$, we obtain 
	\begin{align*}
		\displaystyle\sum_{j=-\infty}^{J}\wgpij(x)=\wphiJ(x).
	\end{align*}
	Next, applying part (ii), we know that for all $J \geq J_1$, we have
	\[
	(1 - \epsilon)  \|f\|^2 \, d\mu_G(\gamma)
	\leq \displaystyle\sum_{j=-\infty}^{J}\wgpij(x)
	\leq (1 + \epsilon)  \|f\|^2 \, d\mu_G(\gamma).
	\]
	Since this estimate holds for all $J \geq J_1$ and $\epsilon > 0$ is arbitrary, we conclude that
	\[
	\sum_{j \in \Z} \wgpij (x)= \|f\|^2
	\] which completes the proof of part (iii).
\ep

\noindent{\bf Proof of Theorem \ref{thm_UCP}}
	Let $\epsilon > 0$ be arbitrary. By Lemma \ref{lem_2_UCP}((i)--(ii)), there exist integers $J_1, J_2 \in\Z$ such that  
	\begin{align}
		\wphij (x)\leq \epsilon \norm{f}^2\quad \text{for all } j \leq J_2 \mbox{ and } (1 - \epsilon)  \|f\|^2 
		\leq \wphij(x)
		\leq (1 + \epsilon)  \|f\|^2  \quad \text{for all } j \geq J_1. \label{eq_1_thm_infity_UCP}
	\end{align}
	Define the interval $\J'=[J_2, J_1]\cap \Z$. We now estimate the error between the full sum and the partial sum
	\begin{align*}
		\left |  \sum_{j \in \Z} \wgpij (x)-\sum_{j \in \J'} \wgpij (x) \right |&= \sum_{j \in \Z} \wgpij (x)- \displaystyle\sum_{j=J_2}^{J_1}\wgpij(x)\\
		&=\|f\|^2 -\left[\wphiJoneplusone(x)-\wphiJtwo(x)\right],
	\end{align*}
	where the last equality follows from Lemma \ref{Lemma_first_for_UCP}.  Applying \eqref{eq_1_thm_infity_UCP} on the above expression gives
	\begin{align*}
		\left |  \sum_{j \in \Z} \wgpij (x)-\sum_{j \in \J''} \wgpij (x) \right |&=\|f\|^2 - \wphiJoneplusone(x)+\wphiJtwo(x)\\
		&\leq\norm{f}^2-(1 - \epsilon)  \|f\|^2+ \epsilon \|f\|^2\\
		&=2 \epsilon \norm{f}^2.
	\end{align*}
    Since each $\wgpij$ is non-negative, controlling the two tails outside $[J_1,J_2]$ also controls the error for any finite set containing $[J_1,J_2]$. 
	As $\epsilon>0$ is arbitrary, thus  $\sum_{j \in \Z}\wgpij$ converges uniformly to $\wgpi$. This proves (i). 
	
	By Lemma \ref{lem_2_UCP} (iii), we have 
	\[
	\sum_{j \in \Z} \wgpij (0)=\sum_{j \in \Z}\int\limits_{\Gamma_{j}} \left| \langle \F( f), \Mg \Phi_{j} \rangle \right|^2  d\mu_{\Gamma_{j}}(\gamma)= \|f\|^2.
	\]
Thus, we obtain
\[
\sum_{j\in\mathbb Z}\int_{P_j}\int_{\Gamma_j}
|\langle f,T_\lambda g_p^{(i)}\rangle|^2
\,d\mu_{\Gamma_j}(\lambda)d\mu_{P_j}(p)
=\|f\|^2,
\] and hence the GTI system $\gtiiz$  is a Parseval frame for $\ltg$. Hence, (ii) is proved. 

Now, since (i) and (ii) hold, then applying Theorem \ref{thm_char_Parseval_frames} yields that for each $\alpha \in  \umzero$,
	\begin{align*}
		\displaystyle\sum_{j \in \mathcal{J} : \alpha \in \Gamma^\perp_j} \frac{1}{s(\Gamma_{j})} \int\limits_{P_j}  \overline{\widehat{g_{p}^{(i)}}(\gamma)} \widehat{g_{p}^{(i)}}(\gamma+\alpha)  d\mu_{P_j}(p) =0 \mbox{ for a.e. } \gamma \in \widehat{G}
	\end{align*}
	and 
	\begin{equation}\label{eq_Cladronsum_one}
		\displaystyle\sum_{j \in \mathcal{J} } \frac{1}{s(\Gamma_{j})} \int\limits_{P_j}  \overline{\widehat{g_{p}^{(i)}}(\gamma)} \widehat{g_{p}^{(i)}}(\gamma)  d\mu_{P_j}(p) =1  \mbox{ for a.e. } \gamma \in \widehat{G}.
	\end{equation}
	Therefore, by \eqref{eq_Cladronsum_one}, the Calder\'on sum of the system $\gtiiz$ equals $1$ for each $i \in \{1,2\}$, proving (iii).
\ep

\begin{rem}\label{rem_comparision with_CG_UEP}
	As a special case $\mathcal{J}=\{j\}_{j=j_0}^{\infty}$, the GTI system defined in Theorem~\ref{thm_UCP} take the form  
	\bee\label{eq_CG_System}  
	\left\{T_{\lambda}\mathcal{F}^{-1}\Phi_{j_0}\right\}_{\lambda \in \Gamma_{j_0}} \cup \bigcup_{j = j_0}^{\infty} \left\{T_{\lambda}g_p^{(i)}\right\}_{\lambda \in \Gamma_{j},\; p \in P_j}.  
	\ene  
	Here, $\mathcal{F}^{-1}$ stands for the inverse Fourier transform. Christensen and Goh established in  \cite[Theorem~3.3]{CG2021} that systems of the form \eqref{eq_CG_System} constitute Parseval frames. The result in Theorem~\ref{thm_UCP} also establishes that the Calderón sum equals one, together with the $\infty$-UCP property. Moreover, we extend the framework by allowing the index set $\mathcal{J}$ to take the forms $\{j\}_{j=-\infty}^{j_1}$ or $\mathbb{Z}$, which were not considered in \cite{CG2021}.
\end{rem}

Using Theorem~\ref{thm_UCP}, we construct a pair of GTI Parseval frames that satisfy the $\infty$-UCP, with $B$-splines serving as generating functions. We return to these GTI systems in Example \ref{ex_pair_of_orthogonal_frames_by_B-spline}, where we demonstrate that these GTI systems are also pairwise orthogonal.
\begin{ex}\label{ex_B_spline}
	Let $G=\Z$ be the LCA group. For some $k \in \Z$, we  let $\J=\{-\infty, \cdots, -1,0,1,\cdots, k\}$. For each $j \in \mathcal{J}$, let $B_j$ denote the fourth-order $B$-spline at level $j$, defined by
	\begin{align}
		B_{j}(x):=\frac{1}{(2^{k-j})^{5/2}} \,\chi_{\{0,1, \cdots,  2^{k-j}-1\}}\ast \chi_{\{0,1, \cdots,  2^{k-j}-1\}}\ast \chi_{\{0,1, \cdots,  2^{k-j}-1\}}\ast \chi_{\{0,1, \cdots,  2^{k-j}-1\}}(x), \,\,\, x \in \Z.\nonumber
	\end{align} 
We define the generating function $\Phi_j$ as the Fourier transform of $B_j$. 
Then
	\begin{align*}
		\Phi_j(\gamma):=	\widehat{B_{j}}(\gamma)&=\frac{1}{(2^{k-j})^{7/2}}\frac{(1-e^{- 2 \pi i (2^{k-j}\gamma)})^{4}}{(1-e^{- 2 \pi i \gamma})^4}\\
		&= \frac{1}{(2^{k-j-1})^{7/2} }\frac{(1-e^{- 2 \pi i (2^{k-j-1}\gamma)})^{4}}{(1-e^{- 2 \pi i \gamma})^4} \frac{(1+e^{- 2 \pi i (2^{k-j-1}\gamma)})^{4}}{2^{7/2}}\\
		&=\widehat{B_{j+1}}(\gamma)H_{j+1}(\gamma)= H_{j+1}(\gamma) \Phi_{j+1}(\gamma),
	\end{align*}
	where  $H_{j+1}(\gamma)= \frac{(1+e^{- 2 \pi i (2^{k-j-1}\gamma)})^{4}}{2^{7/2}} \in L^{\infty}[0, \frac{1}{2^{k-j}})$ is a  $2^{-k+j+1}\mathbb{Z}_{k-j-1}$-periodic function. Next, for $m \in \{1,2,\dots,8\}$ and $i \in \{1,2\}$, 
we define the functions $\Psi_j^{(i)(m)} \in L^2(\mathbb{T})$ by  $\Psi^{(i)(m)}_j(\gamma)=G^{(i)(m)}_{j+1}(\gamma)\Phi_{j+1}(\gamma),$   where   $G_{j+1}^{(1)(m)}$ and $G_{j+1}^{(2)(m)}$ are given by 
	\begin{align*}
		G_{j+1}^{(1)(m)}(\gamma)=&a_{1m} (1+e^{- 2 \pi i (2^{k-j-1}\gamma)})^3 (1-e^{- 2 \pi i (2^{k-j-1}\gamma)})+a_{2m} (1+e^{- 2 \pi i (2^{k-j-1}\gamma)})^2 (1-e^{- 2 \pi i (2^{k-j-1}\gamma)})^2\\
		&+a_{3m} (1+e^{- 2 \pi i (2^{k-j-1}\gamma)}) (1-e^{- 2 \pi i (2^{k-j-1}\gamma)})^3+a_{4m} (1-e^{- 2 \pi i (2^{k-j-1}\gamma)})^4
	\end{align*}
	and
	\begin{align*}
		G_{j+1}^{(2)(m)}(\gamma)=&b_{1m} (1+e^{- 2 \pi i (2^{k-j-1}\gamma)})^3 (1-e^{- 2 \pi i (2^{k-j-1}\gamma)})+b_{2m} (1+e^{- 2 \pi i (2^{k-j-1}\gamma)})^2 (1-e^{- 2 \pi i (2^{k-j-1}\gamma)})^2\\
		&+b_{3m} (1+e^{- 2 \pi i (2^{k-j-1}\gamma)}) (1-e^{- 2 \pi i (2^{k-j-1}\gamma)})^3+b_{4m} (1-e^{- 2 \pi i (2^{k-j-1}\gamma)})^4,
	\end{align*}
	with
	\begin{align*}
		(a_{nm})=\begin{pmatrix}
			\frac{1}{2^4}  &\frac{1}{2^4} &\frac{1}{2^4} &\frac{1}{2^4} &\frac{1}{2^4} &\frac{1}{2^4} &\frac{1}{2^4} &\frac{1}{2^4} \\
			\frac{3^{1/2}}{2^{9/2}} &\frac{3^{1/2}}{2^{5}} +i \frac{3^{1/2}}{2^{5}}  &i\frac{3^{1/2}}{2^{9/2}} &-\frac{3^{1/2}}{2^{5}} +i \frac{3^{1/2}}{2^{5}} &-\frac{3^{1/2}}{2^{9/2}} &-\frac{3^{1/2}}{2^{5}} -i \frac{3^{1/2}}{2^{5}}  &-\frac{3^{1/2}}{2^{9/2}} &\frac{3^{1/2}}{2^{5}} -i \frac{3^{1/2}}{2^{5}}\\
			\frac{1}{2^4} & i \frac{1}{2^4} &-\frac{1}{2^4} &-i\frac{1}{2^4} &\frac{1}{2^4} &i\frac{1}{2^4} &-\frac{1}{2^4} &-i\frac{1}{2^4}\\
			\frac{1}{2^5} &-\frac{1}{2^{11/2}}+i\frac{1}{2^{11/2}} &-i\frac{1}{2^5} &\frac{1}{2^{11/2}}+i\frac{1}{2^{11/2}} &-\frac{1}{2^5} &\frac{1}{2^{11/2}}-i\frac{1}{2^{11/2}} &i\frac{1}{2^5}&-\frac{1}{2^{11/2}}-i\frac{1}{2^{11/2}}
		\end{pmatrix}
	\end{align*}
	and\\
	{\small
		$(b_{nm})
		=\begin{pmatrix}
			\frac{1}{2^4} &-\frac{1}{2^4} &\frac{1}{2^4} &-\frac{1}{2^4} &\frac{1}{2^4} &-\frac{1}{2^4} &\frac{1}{2^4} &-\frac{1}{2^4}\\
			\frac{3^{1/2}}{2^3} &-\frac{3^{1/2}}{2^{7/2}} -i \frac{3^{1/2}}{2^{7/2}} &\frac{3^{1/2}}{2^{7/2}} +i \frac{3^{1/2}}{2^{7/2}} &\frac{3^{1/2}}{2^{7/2}} -i \frac{3^{1/2}}{2^{7/2}} &-\frac{3^{1/2}}{2^3} &\frac{3^{1/2}}{2^{7/2}} +i \frac{3^{1/2}}{2^{7/2}} &-i\frac{3^{1/2}}{2^3} &-\frac{3^{1/2}}{2^{7/2}} +i \frac{3^{1/2}}{2^{7/2}}\\
			\frac{1}{2^4} &-i\frac{1}{2^4}&-\frac{1}{2^4} &i\frac{1}{2^4} &\frac{1}{2^4} &-i\frac{1}{2^4} &-\frac{1}{2^4} &i\frac{1}{2^4}\\
			\frac{1}{2^5} &\frac{1}{2^{11/2}}-i\frac{1}{2^{11/2}} &-i \frac{1}{2^5} &-\frac{1}{2^{11/2}}-i\frac{1}{2^{11/2}}  &-\frac{1}{2^5} &-\frac{1}{2^{11/2}}+i\frac{1}{2^{11/2}} &i\frac{1}{2^5} &\frac{1}{2^{11/2}}+i\frac{1}{2^{11/2}}
		\end{pmatrix}$}\\
	
\noindent	for $	1\leq n \leq 4$ and $	1\leq m\leq 8$.	 The matrix ${\mathfrak{B}^{(i)}_j}(\gamma)$ for $j \in \J$ and $i \in \{1,2\}$, is given by  
	$${\mathfrak{B}^{(i)}_j}(\gamma)=\begin{pmatrix}
		G_{j+1}^{(i)(0)}(\gamma)  &G_{j+1}^{(i)(1)}(\gamma)  	&\cdot &\cdot &\cdot	 &G_{j+1}^{(i)(8)}(\gamma) \\
		G_{j+1}^{(i)(0)}(\gamma+2^{j-k}) &G_{j+1}^{(i)(1)}(\gamma+2^{j-k}) &\cdot &\cdot &\cdot &G_{j+1}^{(i)(8)}(\gamma+2^{j-k})	
	\end{pmatrix}^{T},$$ 
	where   $G_{j+1}^{(i)(0)}= H_{j+1}.$ Next, we  prove that  $({\mathfrak{B}^{(i)}_j}(\gamma))^{\ast} \mathfrak{B}^{(i)}_j(\gamma)=2I_{2}$ for a.e. $\gamma \in [0, 2^{j-k})$, and for each $j \in \mathcal{J}$ and $i \in \{1,2\}$.  For this,  it is sufficient to show that 
	for a.e. $\gamma \in [0, 2^{j-k})$ and $\ell,\ell^{'}\in \{1,2\},$
	\begin{equation}\label{ex eq3}
		\displaystyle\sum_{m=0}^{8}\overline{G^{(i)(m)}_{j+1}(\gamma+\nu_{j,\ell})}G^{(i)(m)}_{j+1}(\gamma+\nu_{j,\ell^{'}})=2\delta_{\ell,\ell^{'}}, \mbox{ where } \nu_{j,1}=0 \mbox{ and } \nu_{j,2}=2^{j-k}.
	\end{equation}	
	We present the proof of \eqref{ex eq3} for $i=1$; the case $i=2$ follows by the same argument. First, suppose that $\ell=\ell'=1$. Then $\nu_{j,\ell}=\nu_{j,\ell'}=0$. Now
	\begin{align}
		\displaystyle\sum_{m=0}^{8}\overline{G^{(1)(m)}_{j+1}(\gamma+\nu_{j,\ell})}G^{(1)(m)}_{j+1}(\gamma+\nu_{j,\ell^{'}})=|H_{j+1}(\gamma)|^2 +\displaystyle\sum_{m=1}^{8}|G^{(1)(m)}_{j+1}(\gamma)|^2. \label{ex eq4}
	\end{align}
	Inserting the expressions for $H_{j+1}$ and $G_{j+1}^{(1)(m)}$ into \eqref{ex eq4} and simplifying, the right-hand side becomes
	\begingroup
	\allowdisplaybreaks
	\begin{align*}
		=&\frac{1}{2^7}	\left|(1+e^{- 2 \pi i (2^{k-j-1}\gamma)})^4\right|^2+ \sum_{m=1}^{8}|a_{1m}|^2\left|  (1+e^{- 2 \pi i (2^{k-j-1}\gamma)})^3 (1-e^{- 2 \pi i (2^{k-j-1}\gamma)})  \right|^2\\
		&+\sum_{m=1}^{8}|a_{2m}|^2\left|  (1+e^{- 2 \pi i (2^{k-j-1}\gamma)})^2 (1-e^{- 2 \pi i (2^{k-j-1}\gamma)})^2  \right|^2\\
		&+\sum_{m=1}^{8}|a_{3m}|^2\left|  (1+e^{- 2 \pi i (2^{k-j-1}\gamma)})^1 (1-e^{- 2 \pi i (2^{k-j-1}\gamma)})^3  \right|^2+\sum_{m=1}^{8}|a_{4m}|^2\left|   (1-e^{- 2 \pi i (2^{k-j-1}\gamma)})^4  \right|^2\\
		=&\frac{1}{2^7}	\left[\left|(1+e^{- 2 \pi i (2^{k-j-1}\gamma)})\right|^2\right]^4+\frac{1}{2^{5}} \left|  (1+e^{- 2 \pi i (2^{k-j-1}\gamma)})\right|^6 \left| (1-e^{- 2 \pi i (2^{k-j-1}\gamma)}) \right|^2
		+\frac{3}{2^{6}}\left|  (1+e^{- 2 \pi i (2^{k-j-1}\gamma)}) \right|^4 
		\\&\times\left|(1-e^{- 2 \pi i (2^{k-j-1}\gamma)})  \right|^4+ \frac{1}{2^{5}}\left|  (1+e^{- 2 \pi i (2^{k-j-1}\gamma)})\right|^2 \left| (1-e^{- 2 \pi i (2^{k-j-1}\gamma)})^6 \right|^2+\left[\frac{1}{2^{7}}\left|   (1-e^{- 2 \pi i (2^{k-j-1}\gamma)}) \right|^2\right]^4
	\end{align*}
	\endgroup
	since $\sum\limits_{m=1}^{8}|a_{1m}|^2=\frac{1}{2^5}$, $\sum\limits_{m=1}^{8}|a_{2m}|^2=\frac{3}{2^6}$, $\sum\limits_{m=1}^{8}|a_{3m}|^2=\frac{1}{2^5}$ and $\sum\limits_{m=1}^{8}|a_{4m}|^2=\frac{1}{2^7}$. This is further equivalent to 	
	\begin{align*}
		=\frac{1}{2^7}\left[\left|   (1+e^{- 2 \pi i (2^{k-j-1}\gamma}) \right|^2+\left|   (1-e^{- 2 \pi i (2^{k-j-1}\gamma}) \right|^2 \right]^4=\frac{1}{2^7}4^4=2.\nonumber \nonumber
	\end{align*}	 
	When $\ell \mbox{ and }\ell'=2,$ we have $\nu_{j,\ell}=\nu_{j,\ell'}=2^{j-k}$. Proceeding as in the case $\ell=\ell'=1$, 
we obtain $\displaystyle\sum_{m=0}^{8}\left|G^{(i)(m)}_{j+1}(\gamma+2^{j-k})\right|^2=2.$
	Next, we suppose $\ell=1$ and $\ell'=2,$ then $\nu_{j,\ell}=0$ and $\nu_{j,\ell'}=2^{j-k}.$ Now,
	\begin{align}
		\displaystyle\sum_{m=0}^{8}\overline{G^{(1)(m)}_{j+1}(\gamma+\nu_{j,\ell})}G^{(1)(m)}_{j+1}(\gamma+\nu_{j,\ell^{'}})=&\overline{H_{j+1}(\gamma)} H_{j+1}(\gamma+2^{j-k}) \nonumber\\
		 &+\displaystyle\sum_{m=1}^{4} \overline{G^{(1)(m)}_{j+1}(\gamma)}G^{(1)(m)}_{j+1}(\gamma+2^{j-k}). \label{ex eq6}
	\end{align}
	Next, using $e^{-2\pi i (2^{k-j-1}(2^{j-k}+\gamma))}=-e^{-2\pi i (2^{k-j-1}\gamma)}$ and  by following a similar steps as in  $\ell,\ell'=1$, the right hand side of \eqref{ex eq6} can be expressed as 
	\begingroup
	\allowdisplaybreaks
	\begin{align*}
		=&\frac{1}{2^7}	\left[(1+\overline{e^{- 2 \pi i (2^{k-j-1}\gamma)}})(1-e^{- 2 \pi i (2^{k-j-1}\gamma)})\right]^4+\frac{1}{2^7}	\left[(1-\overline{e^{- 2 \pi i (2^{k-j-1}\gamma)}})(1+e^{- 2 \pi i (2^{k-j-1}\gamma)})\right]^4\\
		&+\frac{1}{2^{5}} \left[(1+\overline{e^{- 2 \pi i (2^{k-j-1}\gamma)}})(1-e^{- 2 \pi i (2^{k-j-1}\gamma)})\right]^3 \left[(1-\overline{e^{- 2 \pi i (2^{k-j-1}\gamma)}})(1+e^{- 2 \pi i (2^{k-j-1}\gamma)})\right]\\
		&+\frac{3}{2^{6}}\left[(1+\overline{e^{- 2 \pi i (2^{k-j-1}\gamma)}})(1-e^{- 2 \pi i (2^{k-j-1}\gamma)})\right]^2 \left[(1-\overline{e^{- 2 \pi i (2^{k-j-1}\gamma)}})(1+e^{- 2 \pi i (2^{k-j-1}\gamma)})\right]^2
		\\&+ \frac{1}{2^{5}}\left[(1+\overline{e^{- 2 \pi i (2^{k-j-1}\gamma)}})(1-e^{- 2 \pi i (2^{k-j-1}\gamma)})\right] \left[(1-\overline{e^{- 2 \pi i (2^{k-j-1}\gamma)}})(1+e^{- 2 \pi i (2^{k-j-1}\gamma)})\right]^3\\
		= &\frac{1}{2^7}\left[(1-\overline{e^{- 2 \pi i (2^{k-j-1}\gamma)}})(1+e^{- 2 \pi i (2^{k-j-1}\gamma)})
		+(1-\overline{e^{- 2 \pi i (2^{k-j-1}\gamma)}})(1+e^{- 2 \pi i (2^{k-j-1}\gamma)}) \right]^4  
		=  0\nonumber
	\end{align*}
	\endgroup
	using $(1+\overline{z})(1-z)+(1-\overline{z})(1+z)=0$ for $|z|=1$. Similarly, for $\ell=2 \mbox{ and }\ell'=1,$ we have
	$\displaystyle\sum_{m=0}^{8}\overline{G^{(i)(m)}_{j+1}(\gamma+2^{j-k})}G^{(i)(m)}_{j+1}(\gamma)=0.$
	Hence, we obtain
	$$({\mathfrak{B}^{(i)}_j}(\gamma))^{\ast} \mathfrak{B}^{(i)}_j(\gamma)=2I_{2}\,\,  \mbox{  for a.e. } \gamma \in \Omega_j=[0, 2^{j-k}), \ i \in \{1,2\} \mbox{ and } j \in \J.$$
	Let  $\Gamma_{j}:=2^{k-j}\mathbb{Z} \subset \Z.$ Then $\Gamma_{j}^{\perp}=2^{-k+j}\mathbb{Z} $ and its fundamental domain is $\Omega_j=[0, 2^{j-k}).$ Now
	\begin{align*}
		\frac{1}{s(\Gamma_{j})}\left|\Phi_j(\gamma)\right|^2=\left|\frac{1}{\sqrt{s(\Gamma_{j})}} \Phi_j(\gamma)\right|^2&=\left| \frac{1}{(2^{k-j})^{4}}\frac{(1-e^{- 2 \pi i (2^{k-j}\gamma)})^{4}}{(1-e^{- 2 \pi i \gamma})^4}\right|^2.
	\end{align*}
	For $j=k$, we have
	\begin{align*}
		\frac{1}{s(\Gamma_{k})}\left|\Phi_k(\gamma)\right|^2=\left| \frac{1}{(2^{k-k})^{4}}\frac{(1-e^{- 2 \pi i (2^{k-k}\gamma)})^{4}}{(1-e^{- 2 \pi i \gamma})^4}\right|^2= \left| \frac{(1-e^{- 2 \pi i \gamma})^{4}}{(1-e^{- 2 \pi i \gamma})^4}\right|^2=1.
	\end{align*}
	Thus assumption ($\mathcal{N}_1$) holds.  Next,
	$$\lim\limits_{j \to -\infty}\frac{1}{\sqrt{s(\Gamma_{j})}}|\Phi_j(\gamma)|=\left|\lim\limits_{j \to -\infty} \left(\frac{1}{(2^{k-j})}\frac{(1-e^{- 2 \pi i (2^{k-j}\gamma)})}{(1-e^{- 2 \pi i \gamma})}\right)^4\right|=0$$
	using the fact $\lim\limits_{x \to \infty} \left(\frac{1-e^{-ix}}{x}\right)=0$. This implies assumption  ($\mathcal{N}_2$) is true. Thus all the assumptions of Theorem \ref{thm_UCP} are true and hence, for each $i \in \{1,2\}$, the system $\bigcup\limits_{j=-\infty}^{k}\{T_\la \gpi\}_{\la \in 2^{-j+k}\Z,\, p \in  \{(m,j): m=1,2,\ldots, 8\}}$ is a Parseval frames and satisfies $\infty$-UCP, where $
g_{(m,j)}^{(i)}=\mathcal{F}^{-1}\Psi_j^{(i)(m)}.$
\end{ex}

The construction criterion in Theorem~\ref{thm_UCP}, and in particular the matrix condition~\eqref{PFC}, may be viewed as a GTI version of the Unitary Extension Principle (UEP). In the following subsection, our goal is to introduce a more flexible generalization of the UEP, called the Oblique Extension Principle. For a detailed motivation and background on the OEP in $L^2(\R)$, we refer the reader to Chapter 18, Subsection 18.4 in \cite{COB}.

\subsection{Oblique Extension Principle}\label{secOEP}

The next theorem provides an \textit{oblique extension principle} for GTI systems over LCA groups.
\begin{thm}\label{thm_QEP}
	Let $\{\Phi_{j}, H_j, G^{(i)(m)}_{j} \}_{j \in \J, m \in \{0,1, \cdots, s_j\}}$ be defined  as in general setup in Subsection~\ref{subsec_4.1} and assume that conditions $(\mathcal{N}_1)$--$(\mathcal{N}_2)$ hold. 
\begin{enumerate}
\item[(i)]
For every compact set $S\subset \widehat G$, there exists
$J\in\mathcal J$ such that
\[
\mu_{\widehat G}\bigl((\omega+S)\cap(\omega'+S)\bigr)=0
\]
whenever $\omega,\omega'\in\Gamma_J^\perp$ and
$\omega\neq\omega'$.

\item[(ii)]
There exists a sequence
$\{\theta_j\}_{j\in\mathcal J}$ of strictly positive
$\Gamma_j^\perp$-periodic functions in
$L^\infty(\Omega_j)$ such that, for every compact set
$S\subset\widehat G$ and every $\varepsilon>0$, there exists
$J'_1\in\mathcal J$ satisfying
\begin{equation}\label{eq_N_1'}
		\left| \theta_j (\gamma)-1\right| \leq \epsilon \quad \mbox{for all } \, \gamma \in S,
	\end{equation}
for all $j\ge J'_1$.

\item[(iii)]
For each $j\in\mathcal J$, let
\begin{equation} \label{eq:Pk-def}
		\mathfrak{R}_j^{(i)}(\gamma) :=
		\begin{pmatrix}
			H_{j+1}(\gamma + \nu_{j,1})\sqrt{\theta_j (\g)} & \cdots & H_{j+1}(\gamma + \nu_{j,d_j}) \sqrt{\theta_j (\g)}\\
			G^{(1)(i)}_{j+1}(\gamma + \nu_{j,1}) & \cdots & G^{(1)(i)}_{j+1}(\gamma + \nu_{j,d_j}) \\
			\vdots & \ddots & \vdots \\
			G_{j+1}^{(s_j)(i)}(\gamma + \nu_{j,1}) & \cdots & G_{j+1}^{(s_j)(i)}(\gamma + \nu_{j,d_j})
		\end{pmatrix} \quad \mbox{ for }\gamma \in \Omega_j.
	\end{equation}
Assume that
\begin{align}
		({\mathfrak{R}^{(i)}_j}(\gamma))^{\ast} \mathfrak{R}^{(i)}_j(\gamma)=\frac{s(\Gamma_{j})}{s(\Gamma_{j+1})}\theta_{j+1} (\g)I_{d_j},\label{eq_OEP_matrices}
	\end{align}
\end{enumerate}
	Then, for $i \in \{1,2\}$, the GTI system $\displaystyle\bigcup_{j \in \mathcal{J}}\{T_{\lambda}g_p^{(i)}\}_{\lambda \in \Gamma_{j}, p \in P_j}$ is a Parseval frame for $L^2(G)$.
\end{thm}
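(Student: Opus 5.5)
We follow the proof of Theorem~\ref{thm_UCP}, replacing the telescoping identity of Lemma~\ref{Lemma_first_for_UCP} by a weighted version. Fix $f\in\mathcal{D}_B$ with $S=\operatorname{supp}\widehat f$ compact. For each $j$ we introduce the weighted scaling quantity
\[
\widetilde w_{f;\Phi_j}:=\frac{1}{s(\Gamma_j)}\int_{\Omega_j}\theta_j(\gamma)\Bigl|\sum_{\omega\in\Gamma_j^\perp}\widehat f(\omega+\gamma)\overline{\Phi_j(\omega+\gamma)}\Bigr|^2 d\mu_{\widehat G}(\gamma).
\]
Since $\theta_j$ is $\Gamma_j^\perp$-periodic, this is the frame-type sum of the system $\{T_\lambda \mathcal{F}^{-1}\Phi_j\}_{\lambda\in\Gamma_j}$ with the periodic weight inserted, computed via \cite[Proposition 2.2]{CG2021} as in the proof of Lemma~\ref{lem_2_UCP}. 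It suffices to work at $x=0$, because only the Parseval property is claimed.

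\textbf{Step 1 (one-level identity).} We show that
\[
\sum_{m=1}^{s_j}\int_{\Gamma_j}\bigl|\langle\widehat f,\mathcal{M}_\lambda\Psi_j^{(i)(m)}\rangle\bigr|^2d\mu_{\Gamma_j}(\lambda)=\widetilde w_{f;\Phi_{j+1}}-\widetilde w_{f;\Phi_j}.
\]
We write everything on $\Omega_j$ using the coset decomposition \eqref{cosetpartition}, which expresses $\Gamma_j^\perp$-periodizations through $\Gamma_{j+1}^\perp$-periodizations shifted by $v_{j,n}$. Set $u_n(\gamma)=\sum_{\omega\in\Gamma_{j+1}^\perp}\widehat f(\gamma+v_{j,n}+\omega)\overline{\Phi_{j+1}(\gamma+v_{j,n}+\omega)}$. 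Using \eqref{phi_relation}, \eqref{psi_def} and periodicity of the filters:
\begin{itemize}
\item the $\Phi_j$ term becomes $\frac{\theta_j}{s(\Gamma_j)}\bigl|\sum_n \overline{H_{j+1}(\gamma+v_{j,n})}u_n\bigr|^2$;
\item the $\Psi$ terms become $\frac{1}{s(\Gamma_j)}\sum_m\bigl|\sum_n\overline{G^{(i)(m)}_{j+1}(\gamma+v_{j,n})}u_n\bigr|^2$.
\end{itemize}
Together these equal $\frac{1}{s(\Gamma_j)}\|\mathfrak{R}^{(i)}_j(\gamma)\,u\|^2$ (up to conjugations), which by \eqref{eq_OEP_matrices} is $\frac{\theta_{j+1}}{s(\Gamma_{j+1})}\sum_n|u_n|^2$. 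Here we use that $\theta_{j+1}$ is $\Gamma_{j+1}^\perp$-periodic, though not necessarily $\Gamma_j^\perp$-periodic, so it must be evaluated at $\gamma+v_{j,n}$. Folding $\sum_n$ back over $\Omega_{j+1}$ gives $\widetilde w_{f;\Phi_{j+1}}$. Checking exactly where $\theta_{j+1}$ is evaluated, and that the identity \eqref{eq_OEP_matrices} must be read with $\operatorname{diag}(\theta_{j+1}(\gamma+v_{j,n}))$, is the main bookkeeping obstacle. We would first try the reading in which the diagonal is taken this way, which is the natural OEP formulation. Telescoping then gives $\sum_{j=j_0}^J w_{f;g_p^{(i)},j}(0)=\widetilde w_{f;\Phi_{J+1}}-\widetilde w_{f;\Phi_{j_0}}$.

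\textbf{Step 2 (limits).} As $j_0\to-\infty$, note that $\theta_j\in L^\infty$. The estimate of Lemma~\ref{lem_2_UCP}(i) gives $\widetilde w_{f;\Phi_{j_0}}\le \|\theta_{j_0}\|_\infty\,\epsilon\|f\|^2$. To make this tend to zero we need $\sup_j\|\theta_j\|_\infty<\infty$ for $j\to-\infty$. Failing that, we argue via $\theta_j|\Phi_j|^2/s(\Gamma_j)$: the same matrix identity at the diagonal yields $\theta_j|H_{j+1}|^2\le \frac{s(\Gamma_j)}{s(\Gamma_{j+1})}\theta_{j+1}$. Iterating this gives monotonicity $\theta_j|\Phi_j|^2/s(\Gamma_j)\le \theta_{j+1}|\Phi_{j+1}|^2/s(\Gamma_{j+1})$, which, combined with $(\mathcal N_2)$, controls the lower tail. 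As $J\to\infty$, hypothesis (i) makes the periodization collapse for large $J$, exactly as in Lemma~\ref{lem_2_UCP}(ii). This gives $\widetilde w_{f;\Phi_{J+1}}=\frac{1}{s(\Gamma_{J+1})}\int_S\theta_{J+1}|\widehat f|^2|\Phi_{J+1}|^2$, and by \eqref{eq_N_1'} and $(\mathcal N_1)$ this tends to $\|f\|^2$.

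\textbf{Step 3 (conclusion).} Hence $\sum_j\sum_p\int_{\Gamma_j}|\langle f,T_\lambda g_p^{(i)}\rangle|^2=\|f\|^2$ on the dense set $\mathcal{D}_B$. The series has nonnegative terms, so Fatou's lemma and density give a Bessel bound $1$. The identity then extends to all of $L^2(G)$, so the system is a Parseval frame.
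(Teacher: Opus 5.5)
Your Step 1 and the upper-tail half of Step 2 are correct, and they are the paper's argument unpacked. The paper sets $\widetilde\Phi_j=\sqrt{\theta_j}\,\Phi_j$, $\widetilde H_{j+1}=\sqrt{\theta_j/\theta_{j+1}}\,H_{j+1}$ and $\widetilde G^{(i)(m)}_{j+1}=\theta_{j+1}^{-1/2}G^{(i)(m)}_{j+1}$. It checks \eqref{PFC} for these, observes $\widetilde G^{(i)(m)}_{j+1}\widetilde\Phi_{j+1}=\Psi^{(i)(m)}_j$, and then invokes Theorem~\ref{thm_UCP}. Because $\theta_j$ is $\Gamma_j^\perp$-periodic, your $\widetilde w_{f;\Phi_j}$ is exactly $w_{f;\widetilde\Phi_j}(0)$. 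Your one-level identity is therefore Lemma~\ref{Lemma_first_for_UCP} for the rescaled data. Your reading of \eqref{eq_OEP_matrices}, with $\theta_{j+1}(\gamma+v_{j,n})$ on the diagonal, is also what the paper's verification actually uses.

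The genuine gap is the lower tail: you need $\widetilde w_{f;\Phi_{j_0}}\to0$ as $j_0\to-\infty$, and neither of your arguments gives it.

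\textbf{The fallback.} That $a_j=\theta_j|\Phi_j|^2/s(\Gamma_j)$ is nondecreasing in $j$ only says $a_j$ decreases to \emph{some} limit as $j\to-\infty$. $(\mathcal N_2)$ constrains $|\Phi_j|^2/s(\Gamma_j)$, not $\theta_j$, so nothing forces that limit to be zero.

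\textbf{The imported estimate.} More fundamentally, the bound you take from Lemma~\ref{lem_2_UCP}(i), $\widetilde w_{f;\Phi_j}\le s(\Gamma_j)^{-1}\int_S\theta_j|\widehat f\,\Phi_j|^2\,d\mu_{\widehat G}$, is not valid for small $j$. It rests on the claim that for $\gamma\in S_{j,\omega'}$ only $\omega=\omega'$ contributes, i.e.\ that the $\Gamma_j^\perp$-translates of $S$ do not overlap. Hypothesis (i) gives this for large $j$, but it fails as $j\to-\infty$ because $\Gamma_j^\perp$ grows.

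\textbf{A counterexample.} The gap cannot be closed from the stated hypotheses, even with $\theta_j\equiv1$.
\begin{itemize}
\item Take $G=\mathbb{R}$ and $\Gamma_j=2^{-j}\mathbb{Z}$, so $s(\Gamma_j)=2^{-j}$ and $v_{j,2}=2^j$. Use the Shannon data $\Phi_j=2^{-j/2}\chi_{[-2^{j-1},2^{j-1})}$ with ideal filters for $j\ge0$.
\item For $j\le-1$ put $\Phi_j=\chi_{[-1/2,1/2)}$, $H_{j+1}\equiv1$ and $G_{j+1}(\gamma)=e^{\pi i2^{-j}\gamma}$. As in Example~\ref{ex_B_spline}, the filters are $\Gamma_{j+1}^\perp$-periodic.
\item The columns $(1,G_{j+1}(\gamma))^{T}$ and $(1,-G_{j+1}(\gamma))^{T}$ of $\mathfrak B_j$ are orthogonal with squared norm $2=s(\Gamma_j)/s(\Gamma_{j+1})$.
\item $(\mathcal N_1)$ holds, $(\mathcal N_2)$ holds since $2^{j/2}\to0$, and (i) holds.
\end{itemize}
Yet the levels $j\le-1$ produce exactly $\{T_n\,\mathrm{sinc}\}_{n\neq0}$, where $\mathrm{sinc}=\mathcal F^{-1}\chi_{[-1/2,1/2)}$. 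The frame sum is therefore $\|f\|^2-|\langle f,\mathrm{sinc}\rangle|^2$, and the system is not Parseval.

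\textbf{The paper has the same hole.} It obtains $(\mathcal N_2)$ for $\widetilde\Phi_j$ by asserting that the $\theta_j$ are uniformly bounded on compacts, which is not a hypothesis. It then relies on Lemma~\ref{lem_2_UCP}(i).

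\textbf{What would close it.} You need a periodized lower-tail hypothesis, for every compact $S$, such as
\[
\operatorname*{ess\,sup}_{\gamma\in\widehat G}\ \frac{1}{s(\Gamma_j)}\sum_{\omega\in\Gamma_j^\perp}\bigl(\theta_j\,|\Phi_j|^2\chi_S\bigr)(\gamma+\omega)\;\longrightarrow\;0\qquad (j\to-\infty).
\]
Under it, Cauchy--Schwarz gives $\widetilde w_{f;\Phi_j}\le\|f\|^2$ times this quantity, and your Steps 1--3 then go through. Alternatively, restrict to $\mathcal J$ bounded below with the coarse system $\{T_\lambda\mathcal F^{-1}\Phi_{j_0}\}_{\lambda\in\Gamma_{j_0}}$ included.
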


\bp
	Assume that the hypotheses of Theorem~\ref{thm_QEP} hold. Define the sequence $\{\tpj\}_{j \in \mathcal{J}}$ of functions in $L^2(\widehat{G})$ by
	$$\tpj(\g)= \sqrt{\theta_j (\g)} \Phi_j (\g).$$
	Define the $\Gjp$-periodic functions $\thj(\gamma)$ and $\tgmij(\gamma)$ in $L^\infty(\Omega_j)$,  by 
	\begin{equation}
		\thj(\g)=\sqrt{\frac{\theta_{j-1}(\g)}{\theta_j(\g)}}H_j(\gamma) \mbox{ and } \tgmij(\g)=\sqrt{\frac{1}{\theta_j(\g)}}G^{(i)(m)}_j(\gamma).
	\end{equation}
	Also define the matrices $\widetilde{\mathfrak{Z}^{(i)}_j}(\gamma):=
	{{\begin{pmatrix}
				\widetilde{G^{(i)(m)}_{j+1}}(\gamma+ v_{j,n})
	\end{pmatrix}}}_{\substack{
			0\leq m \leq s_j \\
			1\leq n \leq d_j
	}} \mbox{ for } \ a.e. \ \gamma \in \Omega_j$.
	The idea of the proof is to apply Theorem \ref{thm_UCP} to $\tpj, \thj, \tsij$  and therefore  obtain the GTI  Parseval frame  $\displaystyle\bigcup\limits_{j \in \mathcal{J}}\{T_{\lambda}\widetilde{g_p^{(i)}}\}_{\lambda \in \Gamma_{j}, p \in P_j} $ for $L^2(G)$, where $P_j=\{(m,j): m=1,2,\cdots,s_j\}$ and $\mathcal{F}(\widetilde{g_p^{(i)}})=\tsij(\gamma)=\tgmijplusone \tpjplusone$. Finally it turns out that $\widetilde{g_p^{(i)}}=\gpi$. We now prove that $\tpj, \thj \mbox{ and }\tsij$ satisfy the conditions in the general setup in Subsections~\ref{subsec_4.1} and \ref{subsec_sufficent_condtions_for_UCP}. First,
	\begin{align*}
		\tpj(\g)=\sqrt{\theta_j (\g)} \Phi_j (\g)&=\sqrt{\theta_j (\g)} H_{j+1}(\g) \Phi_{j+1} (\g)\\
		&=\sqrt{\frac{\theta_{j}(\g)}{\theta_{j+1}(\g)}}H_{j+1}(\gamma)\tpjplusone (\gamma)\\
		&=\thjplusone(\g)\tpjplusone(\g).
	\end{align*} 
	Next, Let $S$ be any compact  set in $\hg \setminus \mathcal{B}$, then for $\gamma \in S,$ we have
	\begin{align*}
		\left| 	\frac{1}{s(\Gj)}\left|\tpj(\g)\right|^2-1\right|&=\left| \frac{1}{s(\Gj)}\left|\Phi_j (\g)\right|^2 \theta_j (\g) -1 \right|\\
		&=\left| \frac{1}{s(\Gj)}\left|\Phi_j (\g)\right|^2 \theta_j (\g) -\frac{1}{s(\Gj)}\left|\Phi_j (\g)\right|^2+\frac{1}{s(\Gj)}\left|\Phi_j (\g)\right|^2 -1 \right|\\
		&\leq \frac{1}{s(\Gj)}\left|\Phi_j (\g)\right|^2 \left| \theta_j (\g)-1 \right|+ \left| \frac{1}{s(\Gj)}\left|\Phi_j (\g)\right|^2 -1 \right|.
	\end{align*}
	Now, let $\epsilon>0$ and choose $J:=\max\{J_1, J_1'\}$, then using assumption $(\mathcal{N}_1)$ and \eqref{eq_N_1'}, for all $j \geq J$ and  $\gamma \in S$, we obtain  
	\begin{align*}
		\left| 	\frac{1}{s(\Gj)}\left|\tpj(\g)\right|^2-1\right|&\leq \frac{1}{s(\Gj)}\left|\Phi_j (\g)\right|^2 \epsilon + \epsilon= \left(\frac{1}{s(\Gj)}\left|\Phi_j (\g)\right|^2 +1\right)\epsilon\leq (M+1)\epsilon,
	\end{align*}
	 where $M> (1+\epsilon)$. 
	Also, since  sequence of functions $\theta_j(\gamma)$ is uniformly bounded on every compact subset of $\hg$,  and using ($\mathcal{N}_2$), it follows that for every compact set $S \in \hg \setminus \mathcal{B}$,  there exists $J_2 \in \mathcal{J}$ such that for all $j \leq J_2, \ j \in \mathcal{J},$ 
	$$ \frac{1}{\sqrt{s(\Gamma_{j})}}|\tpj (\gamma)| \leq \epsilon , \,\, \forall \, \gamma \in S.$$ 
	Next,  in order to show that 
	\begin{align*}
		\left(\widetilde{\mathfrak{Z}^{(i)}_j}(\gamma)\right)^\ast\widetilde{\mathfrak{Z}^{(i)}_j}(\gamma)=\frac{s(\Gamma_{j})}{s(\Gamma_{j+1})}I_{d_j} \mbox{ for a.e. } \gamma \in \Omega_j,
	\end{align*}
	equivalently to show that 
	\begin{align*}
		\begin{pmatrix}
			\overline{\thjplusone(\gamma+ v_{j,\ell})}\thjplusone(\gamma+ v_{j,\ell'})+	\sum \limits_{m=1}^{s_{j}} \overline{\tgmijplusone (\gamma+ v_{j,\ell})} \tgmijplusone(\gamma + v_{j,\ell'})
		\end{pmatrix}_{\substack{
				1\leq \ell \leq d_j \\
				1\leq \ell' \leq d_j
		}} =\frac{s(\Gamma_{j})}{s(\Gamma_{j+1})}I_{d_j} 
	\end{align*}
for a.e.	$\gamma \in \Omega_j.$ Equivalently, 
	\begin{align}
		\overline{\thjplusone(\gamma+ v_{j,\ell})}\thjplusone(\gamma+ v_{j,\ell'})+	\sum \limits_{m=1}^{s_{j}} \overline{\tgmijplusone (\gamma+ v_{j,\ell})} \tgmijplusone(\gamma + v_{j,\ell'})=\delta_{\ell,\ell'} \frac{s(\Gamma_{j})}{s(\Gamma_{j+1})} \label{eq_OEP_1}
	\end{align}
for $1\leq \ell,\ell' \leq d_j$  and for  a.e.  $\gamma\in \Omega_{j}$.	Now, by substituting $\thjplusone$ and $\tgmijplusone$, we get
	\begingroup
	\allowdisplaybreaks
	\begin{align*}
			\overline{\thjplusone(\gamma+ v_{j,\ell})}&\thjplusone(\gamma+ v_{j,\ell'})+	\sum \limits_{m=1}^{s_{j}} \overline{\tgmijplusone (\gamma+ v_{j,\ell})} \tgmijplusone(\gamma + v_{j,\ell'})\\
		=&\overline{\sqrt{\frac{\theta_{j}(\gamma+ v_{j,\ell})}{\theta_{j+1}(\g+v_{j,\ell})}}H_{j+1}(\gamma+ v_{j,\ell})}\sqrt{\frac{\theta_{j}(\gamma+ v_{j,\ell'})}{\theta_{j+1}(\g+v_{j,\ell'})}}H_{j+1}(\gamma+ v_{j,\ell'})\\
		&+	\sum \limits_{m=1}^{s_{j}} \overline{\sqrt{\frac{1}{\theta_j(\gamma+ v_{j,\ell})}}G^{(i)(m)}_{j+1} (\gamma+ v_{j,\ell})} \sqrt{\frac{1}{\theta_j(\gamma+ v_{j,\ell'})}}G^{(i)(m)}_{j+1} (\gamma+ v_{j,\ell'})\\
	=&\frac{\theta_{j}(\gamma)}{\sqrt{\theta_{j+1}(\g+v_{j,\ell})\theta_{j+1}(\g+v_{j,\ell'})}}\overline{H_{j+1}(\gamma+ v_{j,\ell})} H_{j+1}(\gamma+ v_{j,\ell'})\\
		&+\sum \limits_{m=1}^{s_{j}}\frac{1}{\sqrt{\theta_{j+1}(\g+v_{j,\ell})\theta_{j+1}(\g+v_{j,\ell'})}} \overline{G^{(i)(m)}_{j+1} (\gamma+ v_{j,\ell})} G^{(i)(m)}_{j+1} (\gamma+ v_{j,\ell'})\\
		=&\frac{1}{\sqrt{\theta_{j+1}(\g+v_{j,\ell})\theta_{j+1}(\g+v_{j,\ell'})}} \bigg[ \theta_{j}(\gamma)\overline{H_{j+1}(\gamma+ v_{j,\ell})} H_{j+1}(\gamma+ v_{j,\ell'}) \\
		&+\sum \limits_{m=1}^{s_{j}} \overline{G^{(i)(m)}_{j+1} (\gamma+ v_{j,\ell})} G^{(i)(m)}_{j+1} (\gamma+ v_{j,\ell'})\bigg]
	\end{align*} 
	\endgroup
	since $\theta_{j}$ is a $\Gamma_{j}^\perp$-periodic, that is,  $\theta_{j}(\gamma+ v_{j,\ell})=\theta_{j}(\g)$ as $v_{j,\ell} \in \Gjp$ for each $\ell \in \{1,2,\cdots,d_j\}$. Therefore, \eqref{eq_OEP_1}, is further equivalent to 
	\begin{align}
		\theta_{j}(\gamma)\overline{H_{j+1}(\gamma)} H_{j+1}(\gamma) +\sum \limits_{m=1}^{s_{j}} \overline{G^{(i)(m)}_{j+1} (\gamma)} G^{(i)(m)}_{j+1} (\gamma)=\theta_{j+1}(\gamma)\frac{s(\Gamma_{j})}{s(\Gamma_{j+1})}\label{eq_OEP_2}
	\end{align}
	and 
	\begin{align}
		\theta_{j}(\gamma)\overline{H_{j+1}(\gamma+ v_{j,\ell})} H_{j+1}(\gamma+ v_{j,\ell'}) +\sum \limits_{m=1}^{s_{j}} \overline{G^{(i)(m)}_{j+1} (\gamma+ v_{j,\ell})} G^{(i)(m)}_{j+1} (\gamma+ v_{j,\ell'})=0.\label{eq_OEP_3}
	\end{align}
	Now,  equations \eqref{eq_OEP_2} and \eqref{eq_OEP_3} are true by the assumption \eqref{eq_OEP_matrices}. Now, we define the functions $\tsij$  for $j \in \J$ and $m=1,2, \dots,s_j$ as follows:
	\begin{equation}
		\tsij(\gamma):=\tgmijplusone (\gamma) \tpjplusone(\gamma),\ \gamma \in \widehat{G} \mbox{ and } m=1,2, \dots,s_j.
	\end{equation}
	For $j \in \mathcal{J}, m \in \{1,2, \dots,s_j\}$ and $i \in \{1,2\},$  we define the functions $g^{(i)}_{(m,j)}$  as  inverse Fourier transform of  $\tsij$  by
	\begin{equation}
		\widetilde{g_{(m,j)}^{(i)}}=\mathcal{F}^{-1}\tsij. 
	\end{equation}
	Thus by Theorem \ref{thm_UCP}, GTI system	$\displaystyle\bigcup\limits_{j \in \mathcal{J}}\{T_{\lambda}\widetilde{g_p^{(i)}}\}_{\lambda \in \Gamma_{j}, p \in P_j} $ is a Parseval frames, where $P_j=\{(m,j): m=1,2,\cdots,s_j\}$.
	Now,  observe that for $m=1,2,\cdots,s_j$,
	\begin{align*}
		\Psi^{(i)(m)}_j(\gamma)=G^{(i)(m)}_{j+1}(\gamma)\Phi_{j+1}(\gamma)&=\sqrt{\frac{1}{\theta_{j+1}(\g)}}G^{(i)(m)}_{j+1}(\gamma)\sqrt{\theta_{j+1} (\g)} \Phi_{j+1} (\g)\\
		&=\tgmijplusone (\gamma) \tpjplusone(\gamma)=\tsij(\gamma).
	\end{align*} 
	This completes the proof.
\ep

\subsection{Construction of Pairwise orthogonal Parseval frames}\label{subsec_pairwise_orthogonal}
The following theorem provides a technique to construct pairwise orthogonal Parseval frames.

\begin{thm}\label{thm_pairwise_orthogonal_frames}
	Assume all the hypotheses of Theorem \ref{thm_UCP} hold. Further, suppose that for each $j \in \mathcal{J}$,  the matrix valued functions $\widetilde{\mathfrak{B}^{(1)}_j}(\gamma)$  and  $\widetilde{\mathfrak{B}^{(2)}_j}(\gamma)$, defined in (\ref{matrices}), satisfy 
	$$(\widetilde{\mathfrak{B}^{(1)}_j}(\gamma))^{\ast} \widetilde{\mathfrak{B}^{(2)}_j}(\gamma)=O_{d_j} \mbox{ for a.e. } \gamma \in \Omega_j,$$
	where  $O_{d_j}$ is the zero matrix of order $d_j$. Then  $\gtione$ and $\gtitwo$ (defined in (\ref{GTI eq6}))   are  pairwise orthogonal Parseval frames in $L^2(G).$ 
\end{thm}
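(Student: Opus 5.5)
By Theorem~\ref{thm_UCP}, each system $\bigcup_{j\in\mathcal J}\{T_\lambda g^{(i)}_p\}_{\lambda\in\Gamma_j,p\in P_j}$, $i=1,2$, is a Parseval frame satisfying the $\infty$-UCP. In particular each is Bessel and has the $1$-UCP. By Remark~\ref{rem_dual_UCP}, the pair then satisfies the dual $1$-UCP. Thus Theorem~\ref{thm_characterization_orthogonal_frame} applies, and it suffices to verify condition (i) there: $t_\alpha(\gamma)=0$ a.e.\ for every $\alpha\in\bigcup_{j}\Gamma_j^\perp$, including $\alpha=0$.

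Next I compute the $j$-th summand of $t_\alpha$. Since $\widehat{g^{(i)}_{(m,j)}}=\Psi_j^{(i)(m)}=G^{(i)(m)}_{j+1}\Phi_{j+1}$, for $\alpha\in\Gamma_j^\perp$ the summand is
\[
\frac{1}{s(\Gamma_j)}\,\overline{\Phi_{j+1}(\gamma)}\,\Phi_{j+1}(\gamma+\alpha)\sum_{m=1}^{s_j}\overline{G^{(1)(m)}_{j+1}(\gamma)}\,G^{(2)(m)}_{j+1}(\gamma+\alpha).
\]
Because $\Gamma_j^\perp=\bigcup_{n}(v_{j,n}+\Gamma_{j+1}^\perp)$, we can write $\alpha=v_{j,n}+\beta$ with $\beta\in\Gamma_{j+1}^\perp\subset\Gamma_j^\perp$. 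The filters $G^{(i)(m)}_{j+1}$ are $\Gamma_j^\perp$-periodic, so $G^{(2)(m)}_{j+1}(\gamma+\alpha)=G^{(2)(m)}_{j+1}(\gamma)$. The inner sum is then the $(1,1)$-type entry $\big((\widetilde{\mathfrak B}^{(1)}_j)^\ast\widetilde{\mathfrak B}^{(2)}_j\big)_{n,n}$ evaluated at the point $\gamma$ reduced to $\Omega_j$. More generally, with $\gamma=\gamma_0+v_{j,n}+\omega$ where $\gamma_0\in\Omega_j$, it is a diagonal entry of the product at $\gamma_0$.

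The hypothesis $(\widetilde{\mathfrak B}^{(1)}_j)^\ast\widetilde{\mathfrak B}^{(2)}_j=O_{d_j}$ a.e.\ on $\Omega_j$ gives, in particular, vanishing of all diagonal entries:
\[
\sum_{m=1}^{s_j}\overline{G^{(1)(m)}_{j+1}(\gamma+v_{j,n})}\,G^{(2)(m)}_{j+1}(\gamma+v_{j,n})=0 .
\]
By $\Gamma_j^\perp$-periodicity, $\sum_m\overline{G^{(1)(m)}_{j+1}}G^{(2)(m)}_{j+1}=0$ a.e.\ on all of $\widehat G$. The main point to be careful with is that the shift by $\alpha\in\Gamma_j^\perp$ is invisible to the $\Gamma_j^\perp$-periodic filters. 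Hence only the product $\overline{G^{(1)}(\gamma)}G^{(2)}(\gamma)$ at the same point appears, and the off-diagonal entries are not even needed. If the intended reading were different and periodicity held only for $\Gamma_{j+1}^\perp$, I would instead use the off-diagonal entries indexed by the coset of $\alpha$.

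Consequently every summand vanishes for a.e.\ $\gamma$. Since $\mathcal J$ is countable, the union of the exceptional null sets is null, so $t_\alpha=0$ a.e.\ for every $\alpha\in\bigcup_j\Gamma_j^\perp$, including $t_0$. The absolute convergence of the series is guaranteed by Lemma~\ref{lem_Fourier_cofficents}. Theorem~\ref{thm_characterization_orthogonal_frame} then yields $\Theta=0$. Together with the Parseval property from Theorem~\ref{thm_UCP}, this shows the two systems are pairwise orthogonal Parseval frames.
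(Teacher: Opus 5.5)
Your reduction is the same as the paper's: Theorem~\ref{thm_UCP} gives each system the Parseval property and the $\infty$-UCP, Remark~\ref{rem_dual_UCP} gives the dual $1$-UCP, and Theorem~\ref{thm_characterization_orthogonal_frame} reduces the claim to $t_\alpha=0$. The gap is in the verification step. Your main line assumes $G^{(i)(m)}_{j+1}$ is $\Gamma_j^\perp$-periodic, so that the shift by $\alpha$ disappears and only diagonal entries are needed. That phrase does appear in Subsection~\ref{subsec_4.1}, but it is a misprint for $\Gamma_{j+1}^\perp$-periodicity. Two things show this:
\begin{itemize}
\item Since $v_{j,n}\in\Gamma_j^\perp$, your reading would make all columns of $\widetilde{\mathfrak B}^{(i)}_j(\gamma)$ identical, so the matrix hypotheses would collapse to scalar identities.
\item In Example~\ref{ex_B_spline}, to which the theorem is applied in Example~\ref{ex_pair_of_orthogonal_frames_by_B-spline}, each $G^{(i)(m)}_{j+1}$ is a polynomial in $z=e^{-2\pi i 2^{k-j-1}\gamma}$, and the shift by $v_{j,2}=2^{j-k}$ sends $z$ to $-z$.
\end{itemize}
Under the intended hypothesis, take $\alpha\in v_{j,n}+\Gamma_{j+1}^\perp$ with $n\neq 1$. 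The filter factor of the $j$-th summand of $t_\alpha$ is then $\sum_m\overline{G^{(1)(m)}_{j+1}(\gamma)}\,G^{(2)(m)}_{j+1}(\gamma+v_{j,n})$, a genuinely shifted product. Vanishing of the diagonal entries gives no control over it. So your claim that ``the off-diagonal entries are not even needed'' is false, and as written the argument proves the theorem only under a misreading of its hypotheses.

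The repair is your closing aside, made precise. For $\alpha\in\Gamma_j^\perp$, write $\gamma=\gamma_0+v_{j,n_1}+\omega$ with $\gamma_0\in\Omega_j$ and $\omega\in\Gamma_{j+1}^\perp$. Choose $n_2$ with $v_{j,n_1}+\alpha\in v_{j,n_2}+\Gamma_{j+1}^\perp$; note that the column index depends on the coset of $\gamma$ as well as that of $\alpha$. Then $\Gamma_{j+1}^\perp$-periodicity gives
\[
\sum_{m=1}^{s_j}\overline{G^{(1)(m)}_{j+1}(\gamma)}\,G^{(2)(m)}_{j+1}(\gamma+\alpha)
=\Big(\big(\widetilde{\mathfrak B}^{(1)}_j(\gamma_0)\big)^{\ast}\,\widetilde{\mathfrak B}^{(2)}_j(\gamma_0)\Big)_{n_1,n_2}=0 .
\]
The exceptional set is a countable union of $\Gamma_j^\perp$-translates of a null subset of $\Omega_j$, since $\Gamma_j^\perp$ is discrete and hence countable. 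So every summand vanishes a.e., and $t_\alpha=0$ for all $\alpha\in\bigcup_j\Gamma_j^\perp$, including $\alpha=0$.

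With this correction your route is valid and more self-contained than the paper's. The paper delegates this computation to \cite[Theorem 3.1]{RGS}, and so must first derive $\bigcap_{j}\Gamma_j^\perp=\{0\}$ from the compact-set hypothesis of Theorem~\ref{thm_UCP}. Your termwise computation shows that this intersection property plays no role at this step.
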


\bp
	For each $i \in \{1,2\}$, the GTI system $\gtii$ satisfies   $\infty$-UCP, using Theorem \ref{thm_UCP}(i) and also Parseval frame by Theorem \ref{thm_UCP}(ii). Now the GTI system $\gtione$ is a Parseval frame, and the GTI system $\gtitwo$ satisfies the $\infty$-UCP. Thus, in view of Remark \ref{rem_dual_UCP}, both the GTI systems satisfy the dual $\infty$-UCP and hence dual $1$-UCP. For proving the systems are pairwise orthogonal Parseval frames, it is sufficient to show the following in view of Theorem \ref{thm_characterization_orthogonal_frame}:    
	\begin{equation}\label{a=0}
		\sum \limits_{j \in \mathcal{J}}\frac{1}{s(\Gamma_{j})} \sum \limits_{p \in P_j} \overline{\widehat{(g_p^{(1)})}(w)} \widehat{(g_p^{(2)})}(w)=0\,\, \mbox{for a.e.}\,\, w \in \widehat{G},
	\end{equation}
	and  for $ \alpha \in \displaystyle\bigcup_{j \in \mathcal{J}} \Gamma^\perp_j\setminus\{0\},$
	\begin{equation}\label{a}
		\sum \limits_{j \in \mathcal{J} :\alpha \in \Gamma_{j}} \frac{1}{s(\Gamma_{j})} \sum \limits_{p \in P_j} \overline{\widehat{(g_p^{(1)})}(w)} \widehat{(g_p^{(2)})}(w+\alpha)=0\,\, \mbox{for a.e.}\,\, w \in \widehat{G}.
	\end{equation}
	To prove (\ref{a=0}) and (\ref{a}), we refer to the proof of \cite[Theorem 3.1]{RGS}. However, \cite[Theorem 3.1]{RGS} relies on the assumption that $\bigcap\limits_{j \in \mathcal{J}} \Gamma_j^\perp= {0}$.
	We  demonstrate that this condition follows from the hypothesis already stated in Theorem~\ref{thm_UCP}, i.e., for a compact set $S$ in $\widehat{G}$, there exists $J \in \mathbb{Z}$ such that
	\begin{equation}\label{eq_UEP_*}
		\mu_{\hg}\big( (\omega + S) \cap (\omega' + S) \big) = 0 \quad \text{for } \omega \neq \omega', \;\; \omega, \omega' \in \Gamma_J^\perp.
	\end{equation}
	To see this, we assume the contrary, i.e.,  $\bigcap\limits_{j \in \mathcal{J}} \Gamma_j^\perp \neq \{0\}.$ Then there exists a  $w \in \bigcap\limits_{j \in \mathcal{J}} \Gamma_j^\perp$ such that $w\neq 0$.  Let $\Omega_j$ be a fundamental domain with $\mu_{\widehat{G}}(\Omega_j) \neq 0$. Choose a compact set $T \subset \Omega_j$ such that $\mu_{\widehat{G}}(T) \neq 0$ and define $S := T \cup (T + w)$. Then $S \cap (S + w) \supset T + w$, so we have
	$$\mu_{\hg}(S \cap S+w) \geq \mu_{\hg}(T+w) \neq 0, w \in \Gjp \mbox{ for  each } j \in \J.$$
	Hence, for this compact set $S$, there is no $j \in \mathcal{J}$ such that $\mu_{\widehat{G}}\big( (w + S) \cap (0 + S) \big) = 0$ for $w \in \Gamma_j^\perp$. This contradicts \eqref{eq_UEP_*}. Therefore $\bigcap\limits_{j \in \mathcal{J}} \Gamma_j^\perp = {0}$.
\ep
In contrast to \cite{RGS}, where the construction requires the subgroups $\{\Gamma_j\}_{j\in J}$ to become stationary as $j \to -\infty$, Theorem \ref{thm_pairwise_orthogonal_frames} provides pairwise orthogonal Parseval GTI systems without this assumption.

\begin{ex}\label{ex_pair_of_orthogonal_frames_by_B-spline}
	Recall the GTI systems $\bigcup\limits_{j=-\infty}^{k}\{T_\la \gpi\}_{\la \in 2^{-j+k}\Z, \, p \in  \{(m,j): m=1,2,\ldots, 8\}}$ for $i \in \{1,2\}$ constructed in Example \ref{ex_B_spline}. Since all the Assumptions of  Theorem \ref{thm_UCP} are satisfied, it follows that both are Parseval frames. 
	Next, we show that for each $j \in \J$,
	$$(\widetilde{\mathfrak{B}^{(1)}_j}(\gamma))^{\ast} \widetilde{\mathfrak{B}^{(2)}_j}(\gamma)=O_{d_j} \mbox{ for a.e. } \gamma \in \Omega_j.$$
	Equivalently, to show that
	for a.e. $\gamma \in [0, 2^{j-k})$ and $\ell,\ell^{'}\in \{1,2\},$
	\begin{equation}\label{ex o eq1}
		\displaystyle\sum_{m=1}^{8}\overline{G^{(1)(m)}_{j+1}(\gamma+\nu_{j,\ell})}G^{(2)(m)}_{j+1}(\gamma+\nu_{j,\ell^{'}})=0, \mbox{ where } \nu_{j,1}=0 \mbox{ and } \nu_{j,2}=2^{j-k}.
	\end{equation}	
	First suppose  $\ell=\ell'=1,$ then $\nu_{j,\ell}=\nu_{j,\ell'}=0.$  Now, we calculate 
	\begin{align*}
		\displaystyle\sum_{m=1}^{8}&\overline{G^{(1)(m)}_{j+1}(\gamma+\nu_{j,\ell})}G^{(2)(m)}_{j+1}(\gamma+\nu_{j,\ell^{'}})=\displaystyle\sum_{m=1}^{8}\overline{G^{(1)(m)}_{j+1}(\gamma)}G^{(2)(m)}_{j+1}(\gamma) \nonumber\\
		=& \sum_{m=1}^{8}\overline{a_{1m}}b_{1m}\left|  (1+e^{- 2 \pi i (2^{k-j-1}\gamma})^3 (1-e^{- 2 \pi i (2^{k-j-1}\gamma})  \right|^2\\
		&+\sum_{m=1}^{8}\overline{a_{2m}}b_{2m}\left|  (1+e^{- 2 \pi i (2^{k-j-1}\gamma})^2 (1-e^{- 2 \pi i (2^{k-j-1}\gamma})^2  \right|^2\\
		&+\sum_{m=1}^{8}\overline{a_{3m}}b_{3m}\left|  (1+e^{- 2 \pi i (2^{k-j-1}\gamma})^1 (1-e^{- 2 \pi i (2^{k-j-1}\gamma})^3  \right|^2+\sum_{m=1}^{8}\overline{a_{4m}}b_{4m}\left|   (1-e^{- 2 \pi i (2^{k-j-1}\gamma})^4  \right|^2\\
		&=0,
	\end{align*}
	since $\sum\limits_{m=1}^{8}\overline{a_{nm}}b_{nm}=0$ for each $n \in \{1,2,3,4\}$.  Similarly, we can show that (\ref{ex o eq1})  is true for the other values of $\ell$ and $\ell'.$ 
	Hence, using Theorem \ref{thm_pairwise_orthogonal_frames}, the GTI systems  $\bigcup\limits_{j=-\infty}^{j_0}\{T_\gamma \gpone\}_{\gamma \in 2^{-j+k}\Z, \, p \in  \{(m,j): m=1,2,\ldots, 8\}}$ and $\bigcup\limits_{j=-\infty}^{j_0}\{T_\la \gptwo\}_{\la \in 2^{-j+k}\Z, \, p \in  \{(m,j): m=1,2,\ldots, 8\}}$ are pairwise orthogonal Parseval frames. 
\end{ex}
The following application of Theorem \ref{thm_pairwise_orthogonal_frames} provides a recipe to construct pairwise orthogonal Parseval frames of the form
$$\displaystyle\bigcup_{j \in \mathcal{J}}\{T_{\la}h_p^{(1)}\}_{\la \in \e\Gamma_{j}, p \in P_j} \mbox{ and } \displaystyle\bigcup_{j \in \mathcal{J}}\{T_{\la}h_p^{(2)}\}_{\la \in \z\Gamma_{j}, p \in P_j}.$$

\begin{thm}\label{thm_new_pairwise_orthogonal_frames}
Let the GTI systems $\gtione$ and $\gtitwo$ be constructed as in Theorem~\ref{thm_pairwise_orthogonal_frames}. Then 
	$$\displaystyle\bigcup_{j \in \mathcal{J}}\{T_{\la}h_p^{(1)}\}_{\la \in \e\Gamma_{j}, p \in P_j} \mbox{ and } \displaystyle\bigcup_{j \in \mathcal{J}}\{T_{\la}h_p^{(2)}\}_{\la \in \z\Gamma_{j}, p \in P_j}$$
	are pairwise orthogonal Parseval frames, where $h_p{(1)}:=D_\e^{-1} g_p^{(1)} \mbox{ and } h_p{(2)}:=D_\z^{-1} g_p^{(2)}.$
\end{thm}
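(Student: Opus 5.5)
The plan is to transfer everything from the untwisted systems of Theorem~\ref{thm_pairwise_orthogonal_frames} to the twisted ones by the unitary dilations $D_\e$ and $D_\z$, using the intertwining relation $D_\e T_\la = T_{\e^{-1}(\la)} D_\e$ already used in the proof of Theorem~\ref{thm_new_characterization_orthogonal_frame}. Equivalently, $D_\e^{-1} T_\la = T_{\e\la} D_\e^{-1}$ for $\la\in G$. Hence, for $\la\in\Gamma_j$,
\[
T_{\e\la} h_p^{(1)} = T_{\e\la} D_\e^{-1} g_p^{(1)} = D_\e^{-1} T_\la g_p^{(1)},
\]
and similarly $T_{\z\la}h_p^{(2)} = D_\z^{-1}T_\la g_p^{(2)}$. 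Thus the system $\bigcup_{j}\{T_{\la}h_p^{(1)}\}_{\la\in\e\Gamma_j,p\in P_j}$ is the image of $\gtione$ under the unitary $D_\e^{-1}$, indexed by the same parameters $(j,p,\la)$ with $\la\in\Gamma_j$. The same holds for the second system with $D_\z^{-1}$.

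\emph{Parseval property.} For $f\in\ltg$ I will compute
\[
\langle f, T_{\e\la}h_p^{(1)}\rangle=\langle D_\e f, T_\la g_p^{(1)}\rangle .
\]
It follows that the frame sum for the new system at $f$ equals the frame sum of $\gtione$ at $D_\e f$. By Theorem~\ref{thm_UCP}(ii) that sum is $\|D_\e f\|^2=\|f\|^2$. One point needs checking: the measure on $\e\Gamma_j$ implicit in the definition must be the pushforward of $\mu_{\Gamma_j}$. I will adopt this convention, which is the one implicit in \eqref{mixed_dual_grammian_operator}, where integration runs over $\la\in\Gamma_j$ and the translation is $\e\la$. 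The same argument with $\z$ gives the Parseval property of the second system.

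\emph{Orthogonality.} I will compute the mixed dual Gramian $\Theta_{\e,\z}$ of \eqref{mixed_dual_grammian_operator} for the pair $h^{(1)},h^{(2)}$. Substituting the identities above gives
\[
\Theta_{\e,\z}f=\sum_j\int_{P_j}\int_{\Gamma_j}\langle D_\e f,T_\la g^{(1)}_p\rangle D_\z^{-1}T_\la g_p^{(2)}\,d\mu_{\Gamma_j}(\la)\,d\mu_{P_j}(p)=D_\z^{-1}\,\Theta\, D_\e f .
\]
Here $\Theta$ is the mixed dual Gramian of $\gtione$ and $\gtitwo$. Pulling the bounded operator $D_\z^{-1}$ out of the weakly convergent integral is justified because both systems are Bessel. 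Theorem~\ref{thm_pairwise_orthogonal_frames} gives $\Theta=0$, so $\Theta_{\e,\z}=0$, which is pairwise orthogonality in the sense of Definition~\ref{defn_pairwise_orthogonal}.

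As an alternative route, Theorem~\ref{thm_new_characterization_orthogonal_frame} could be applied. The new systems are Bessel, and the $1$-UCP transfers under automorphisms, as shown in its proof. Its condition (ii), applied to $h_p^{(i)}$, reduces via $\widehat{D_\e^{-1}g}(\gamma)=\Delta(\e)^{-1/2}\widehat g((\e^\ast)^{-1}\gamma)$ exactly to the conditions \eqref{a=0} and \eqref{a} verified for $g_p^{(i)}$. The direct operator identity is simpler, so I will use it.

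The only delicate step is the bookkeeping of measures and intertwining directions, namely making sure that $D_\e^{-1}T_\la=T_{\e\la}D_\e^{-1}$ holds with the paper's convention $(D_\eta f)(x)=\Delta(\eta)^{-1/2}f(\eta x)$. I expect no further obstacle.
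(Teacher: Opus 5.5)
Your proof is correct and is essentially the paper's proof. Both use $D_\e^{-1}T_\la=T_{\e\la}D_\e^{-1}$ to realise the new systems as images of the untwisted systems under the unitaries $D_\e^{-1}$ and $D_\z^{-1}$, which gives the Parseval property, and both reduce orthogonality to $D_\z\Theta_{\e,\z}D_\e^{-1}=\Theta=0$ via Theorem~\ref{thm_pairwise_orthogonal_frames}. Your explicit identity $\Theta_{\e,\z}=D_\z^{-1}\Theta D_\e$ is just a cleaner form of the paper's polarisation step, and the intertwining relation you flagged holds regardless of the normalising constant in $D_\e$.
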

\bp
Let $\Theta_{\e, \z}$ be the mixed dual Gramian operator for the systems $\displaystyle\bigcup_{j \in \mathcal{J}}\{T_{\la}h_p^{(1)}\}_{\la \in \e\Gamma_{j}, p \in P_j}  $ and $\displaystyle\bigcup_{j \in \mathcal{J}}\{T_{\la}h_p^{(2)}\}_{\la \in \z\Gamma_{j}, p \in P_j}$. By polarisation identity, these systems  are pairwise orthogonal if and only if for all $f \in \ltg$, we have
\begin{align*}
	\langle D_\z \Theta_{\e, \z} D_\e^{-1}f, f \rangle =0.
\end{align*}
By following the similar steps as in the proof of Theorem \ref{thm_new_characterization_orthogonal_frame}, the above condition is equivalent to the systems $\displaystyle\bigcup\limits_{j \in \mathcal{J}} \{T_{\la} D_\e h^{(1)}_p \}_{\la \in \Gamma_j, p \in P_j}$ and $\displaystyle\bigcup\limits_{j \in \mathcal{J}} \{T_{\la} D_\z h^{(2)}_p \}_{\la \in \Gamma_j, p \in P_j}$  are pairwise orthogonal. By substituting values of $h^{(1)}_p$ and $h^{(2)}_p$,
it is equivalent to $\gtione$ and $\gtitwo$ are pairwise orthogonal. This is true by Theorem \ref{thm_pairwise_orthogonal_frames}.

Note that the system 
$$\displaystyle\bigcup_{j \in \mathcal{J}}\{T_{\la}h_p^{(1)}\}_{\la \in \e\Gamma_{j}, p \in P_j}=\displaystyle\bigcup_{j \in \mathcal{J}}\{T_{\e\gamma} D_\e^{-1} g_p^{(1)}\}_{\la \in \Gamma_{j}, p \in P_j}= \displaystyle\bigcup_{j \in \mathcal{J}}\{ D_\e^{-1}T_{\la}  g_p^{(1)}\}_{\la \in \Gamma_{j}, p \in P_j}$$
which is a Parseval frame, since $\gtione$ is a Parseval frame by Theorem \ref{thm_pairwise_orthogonal_frames} and $D_\e^{-1}$ is a unitary operator. A similar arguments shows that $\displaystyle\bigcup_{j \in \mathcal{J}}\{T_{\la}h_p^{(2)}\}_{\la \in \z\Gamma_{j}, p \in P_j}$ is also a Parseval frame. This finishes the proof.
\ep

	In \cite{RGS}, the authors present a method for constructing two orthogonal frames and $N$ Parseval frames from a given Parseval frame. However, the construction of $N$ Parseval frames is possible only when the original frame has a single filter $G^{(1)(1)}$, that is, when $s_j=1$. If the given frame contains more than one filter, only two new Parseval frames can be generated using the method from \cite{RGS}.

In contrast, the following theorem  addresses this limitation: it enables the construction of $N$ Parseval frames from a single given Parseval frame, regardless of the number of filters present. This advancement generalizes \cite[Theorems 3.2 and 3.3]{RGS}, offering a broader, more powerful construction framework.

\begin{thm}\label{thm_construction_of_N_pair_OF}
	Consider the GTI system  $\,\bigcup_{j \in \mathcal{J}}\{T_{\lambda}g_p^{(1)}\}_{\lambda \in \Gamma_{j}, p \in P_j}$  (defined in (\ref{GTI eq6})) satisfying the assumptions of Theorem \ref{thm_UCP}. Let  $V^{j}(\gamma)=\begin{pmatrix}
		V^{j}_{1}(\gamma) \ 	V^{j}_{2}(\gamma) \cdots \	V^{j}_{Ns_j}(\gamma)
	\end{pmatrix}$ be an $N s_j \times N s_j$ unitary matrix whose entries are $\Gamma_{j}^\perp$-periodic, where $V_r^j(\gamma)$ denotes its $r$-th column. For $j \in \mathcal{J}$ and $i \in \{2,3,\dots,N+1\}$, assume that  $G^{(i)(0)}_{j+1}(\gamma)=H_{j+1}(\gamma)$ and for $m \in \{1,2,\dots,N s_j\}$, suppose that the functions $G^{(i)(m)}_{j+1}$ satisfy
	\begin{equation}\label{A1_1}
		\small{
			\begin{pmatrix}
				G_{j+1}^{(i)(1)}(\gamma)  \\
				G_{j+1}^{(i)(2)}(\gamma) \\
				\vdots\\
				G_{j+1}^{(i)(Ns_j)}(\gamma) 
			\end{pmatrix}= \begin{pmatrix}
				V_{1+(i-2)s_j}^{j}(\gamma) \, V_{2+(i-2)s_j}^{j}(\gamma) \, \cdots \, V_{s_j+(i-2)s_j}^{j}(\gamma)
			\end{pmatrix}
			\begin{pmatrix}
				G_{j+1}^{(1)(1)}(\gamma)  \\
				G_{j+1}^{(1)(2)}(\gamma) \\
				\vdots\\
				G_{j+1}^{(1)(s_j)}(\gamma) 
			\end{pmatrix}, \ \gamma \in \widehat{G}.}
	\end{equation}
	  
	Then   $\,\bigcup_{j \in \mathcal{J}}\{T_{\lambda}g_p^{(i)}\}_{\lambda \in \Gamma_{j}, p \in P_j^{'}}$ and $\,\bigcup_{j \in \mathcal{J}}\{T_{\lambda}g_p^{(i')}\}_{\lambda \in \Gamma_{j}, p \in P_j^{'}}$ (defined in (\ref{GTI eq6})) are pairwise orthogonal Parseval frames, where $P_j^{'}=\{(m,j):m=1,2,\cdots, Ns_j \},$ and $ i,i' \in \{2,3, \cdots, N+1\}$. 
\end{thm}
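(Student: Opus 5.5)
We reduce everything to Theorem~\ref{thm_UCP} and Theorem~\ref{thm_pairwise_orthogonal_frames}, applied to the new filter families $\{G^{(i)(m)}_{j+1}\}_{m=0}^{Ns_j}$ with $G^{(i)(0)}_{j+1}=H_{j+1}$ and the same $\Phi_j$. The conditions $(\mathcal{N}_1)$--$(\mathcal{N}_2)$ and the non-overlap hypothesis on $\Gamma_J^\perp$-translates involve only $\Phi_j$ and the subgroups, so they carry over unchanged. Two matrix identities remain to be checked for a.e.\ $\gamma\in\Omega_j$: the Parseval condition \eqref{PFC} for each $\mathfrak B^{(i)}_j$, $i\in\{2,\dots,N+1\}$, and the cross condition $(\widetilde{\mathfrak B}^{(i)}_j)^\ast\widetilde{\mathfrak B}^{(i')}_j=O_{d_j}$ for $i\neq i'$.

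To set up both identities, write $g(\gamma):=(G^{(1)(1)}_{j+1}(\gamma),\dots,G^{(1)(s_j)}_{j+1}(\gamma))^T$. Let $W_i(\gamma)$ be the $Ns_j\times s_j$ block of columns of $V^j(\gamma)$ with indices $(i-2)s_j+1,\dots,(i-1)s_j$. Then \eqref{A1_1} reads $\widetilde g^{(i)}(\gamma)=W_i(\gamma)g(\gamma)$. The entries of $V^j$ are $\Gamma_j^\perp$-periodic and $v_{j,n}\in\Gamma_j^\perp$, so $W_i(\gamma+v_{j,n})=W_i(\gamma)$. Hence
\[
\widetilde{\mathfrak B}^{(i)}_j(\gamma)=W_i(\gamma)\,\widetilde{\mathfrak B}^{(1)}_j(\gamma).
\]
Since $V^j$ is unitary, its columns are orthonormal, which gives $W_i^\ast W_{i'}=\delta_{i,i'}I_{s_j}$.

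For the cross condition with $i\neq i'$, we compute
\[
(\widetilde{\mathfrak B}^{(i)}_j)^\ast\widetilde{\mathfrak B}^{(i')}_j=(\widetilde{\mathfrak B}^{(1)}_j)^\ast W_i^\ast W_{i'}\widetilde{\mathfrak B}^{(1)}_j=O_{d_j}.
\]
For the Parseval condition, the same computation with $i=i'$ gives $(\widetilde{\mathfrak B}^{(i)}_j)^\ast\widetilde{\mathfrak B}^{(i)}_j=(\widetilde{\mathfrak B}^{(1)}_j)^\ast\widetilde{\mathfrak B}^{(1)}_j$. The full matrix $\mathfrak B^{(i)}_j$ is $\widetilde{\mathfrak B}^{(i)}_j$ with the common first row $(H_{j+1}(\gamma+v_{j,n}))_n$ adjoined, so
\[
(\mathfrak B^{(i)}_j)^\ast\mathfrak B^{(i)}_j=h^\ast h+(\widetilde{\mathfrak B}^{(i)}_j)^\ast\widetilde{\mathfrak B}^{(i)}_j=(\mathfrak B^{(1)}_j)^\ast\mathfrak B^{(1)}_j=\frac{s(\Gamma_j)}{s(\Gamma_{j+1})}I_{d_j},
\]
where $h$ is that row and the last equality is the hypothesis on the original system. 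This verifies \eqref{PFC} for every $i\ge 2$.

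Theorem~\ref{thm_UCP} now shows that each system with $i\ge2$ satisfies the $\infty$-UCP, is a Parseval frame, and has Calder\'on sum $1$. Theorem~\ref{thm_pairwise_orthogonal_frames}, applied to the pair $(i,i')$ in place of $(1,2)$, then gives pairwise orthogonality for $i\neq i'$. The step I expect to be the main obstacle is justifying the factorization $\widetilde{\mathfrak B}^{(i)}_j=W_i\widetilde{\mathfrak B}^{(1)}_j$. It requires the unitary $V^j$ to be periodic with respect to $\Gamma_j^\perp$, not merely $\Gamma_{j+1}^\perp$, so that it commutes past the coset shifts $v_{j,n}$. It also requires \eqref{A1_1} to hold pointwise on all of $\widehat G$. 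Without this periodicity, the cross terms would involve $W_i(\gamma+v_{j,n})^\ast W_{i'}(\gamma+v_{j,n'})$ and would not vanish. A secondary point is measurability and boundedness of the new filters, which holds because unitary entries are bounded by $1$.
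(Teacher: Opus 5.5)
Your proposal is correct and follows the paper's proof. The paper likewise uses the $\Gamma_j^\perp$-periodicity of $V^j$ to get $\widetilde{\mathfrak B}^{(i)}_j=W_i\widetilde{\mathfrak B}^{(1)}_j$, uses orthonormality of the column blocks to verify the two matrix identities, and then invokes Theorem~\ref{thm_UCP} and Theorem~\ref{thm_pairwise_orthogonal_frames}. For the Parseval condition the paper writes $\mathfrak B^{(i)}_j=U^{(i)}_j\mathfrak B^{(1)}_j$ with the block-diagonal isometry $U^{(i)}_j$ (blocks $1$ and $W_i$); this is the same computation as your splitting off of the $H_{j+1}$-row.
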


\noindent{\bf Proof of Theorem~\ref{thm_construction_of_N_pair_OF}}
First, note that the matrices $\mathfrak{B}^{(1)}_{j}(\gamma)$ and $\widetilde{\mathfrak{B}^{(1)}_{j}}(\gamma)$ are of order $(s_j+1)\times d_j$ and $s_j \times d_j$, respectively. For each $i \in \{2,\dots,N+1\}$, we define the matrices 
(similar to~\eqref{matrices}) $\mathfrak{B}^{(i)}_j(\gamma)$ and $\widetilde{\mathfrak{B}^{(i)}_j}(\gamma)$, as follows:
$$\displaystyle \mathfrak{B}^{(i)}_j(\gamma):={{\begin{pmatrix}
				G^{(i)(m)}_{j+1}(\gamma+ v_{j,n})
	\end{pmatrix}}}_{\substack{
			0\leq m \leq Ns_j \\
			1\leq n \leq d_j
	}} \qquad \mbox{and} \qquad  \widetilde{\mathfrak{B}^{(i)}_j}(\gamma):={{\begin{pmatrix}
				G^{(i)(m)}_{j+1}(\gamma+ v_{j,n})
	\end{pmatrix}}}_{\substack{
			1\leq m \leq Ns_j \\
			1\leq n \leq d_j
	}}. $$ 
    Clearly, the matrices $\mathfrak{B}^{(i)}_j(\gamma)$ and $\widetilde{\mathfrak{B}^{(i)}_j}(\gamma)$ are of order 
 $(Ns_j+1)\times d_j$ and $Ns_j \times d_j$, respectively.
	To prove that, for each $i \in \{2,\dots,N+1\}$, the system $\gtii$ is a Parseval frame for $L^2(G)$, we use Theorem~\ref{thm_UCP}(ii). In order to use this, it is sufficient to show that for each  $j \in \mathcal{J}$ and $i \in \{2,3, \cdots, N+1\},$ we have 
	$$	({\mathfrak{B}^{(i)}_j}(\gamma))^{\ast} \mathfrak{B}^{(i)}_j(\gamma)=\frac{s(\Gamma_{j})}{s(\Gamma_{j+1})}I_{d_j}\,  \mbox{  for a.e. } \gamma \in \Omega_j.$$
	 For $i \in \{2,3,\cdots,N+1\}$, the matrix $\mathfrak{B}^{(i)}_j(\gamma)$  can be expressed as $$\mathfrak{B}^{(i)}_j(\gamma)=U^{(i)}_j(\gamma)\mathfrak{B}^{(1)}_j(\gamma), \mbox{ where } U^{(i)}_j(\gamma)=\begin{pmatrix}
		1 &0 &0  &\cdots &0\\
		0 &V_{1+(i-2)s_j}^{j}(\gamma) &V_{2+(i-2)s_j}^{j}(\gamma) &\cdots &V_{s_{j}+(i-2)s_j}^{j}(\gamma)
	\end{pmatrix}, $$ 	as  the entries of $V^{j}_{i}(\gamma)$ are $\Gamma_{j}^\perp$-periodic. Here, the matrix $U^{(i)}_j(\gamma)$ is of order $(Ns_j+1)\times (s_j+1)$.
	The columns  $V_{1}^{j}(\gamma),V_{2}^{j}(\gamma) \cdots, V_{Ns_j}^{j}(\gamma)$ are orthonormal implies $(U^{(i)}_j(\gamma))^{\ast} U^{(i)}_j(\gamma)= I_{s_j+1}.$ Thus,  we have
	\begin{align*}
		({\mathfrak{B}^{(i)}_j}(\gamma))^{\ast} \mathfrak{B}^{(i)}_j(\gamma)&=(\mathfrak{B}^{(1)}_j(\gamma))^{\ast}(U^{(i)}_j(\gamma))^{\ast} U^{(i)}_j(\gamma) \mathfrak{B}^{(1)}_j(\gamma)\\
		&=(\mathfrak{B}^{(1)}_j(\gamma))^{\ast}I_{s_j+1} \mathfrak{B}^{(1)}_j(\gamma)\\
		&=({\mathfrak{B}^{(1)}_j}(\gamma))^{\ast} \mathfrak{B}^{(1)}_j(\gamma)=\frac{s(\Gamma_{j})}{s(\Gamma_{j+1})}I_{d_j}
	\end{align*}
    using (\ref{PFC}). Therefore, by Theorem~\ref{thm_UCP}, for each $i \in \{2,3,\dots,N+1\}$, the GTI system  $\,\bigcup_{j \in \mathcal{J}}\{T_{\lambda}g_p^{(i)}\}_{\lambda \in \Gamma_{j}, p \in P_j^{'}}$ is a Parseval frame. 
	Next, for proving the systems   $\displaystyle\bigcup_{j \in \mathcal{J}}\{T_{\lambda}g_p^{(i)}\}_{\lambda \in \Gamma_{j}, p \in P_j^{'}}$ and  $\displaystyle\bigcup_{j \in \mathcal{J}}\{T_{\lambda}g_p^{(i')}\}_{\lambda \in \Gamma_{j}, p \in P_j^{'}}$ are  pairwise orthogonal, we use  Theorem \ref{thm_pairwise_orthogonal_frames}. For this, it suffices to show that, for each $j \in \mathcal{J}$ and $i,i' \in \{2,3,\dots,N+1\}$ with $i \neq i'$,  $(\widetilde{\mathfrak{B}^{(i)}_j})^{\ast}(\gamma) \widetilde{\mathfrak{B}^{(i')}_j}(\gamma)=O_{d_j}, \mbox{  a.e. } \gamma \in \Omega_j$. For $i,i' \in \{2,3,\dots,N+1\}$, we can express the matrices $\widetilde{\mathfrak{B}^{(i)}_j}(\gamma)$ and $\widetilde{\mathfrak{B}^{(i')}_j}(\gamma)$ as follows:
	$$\widetilde{\mathfrak{B}^{(i)}_j}(\gamma)=\begin{pmatrix}
		V_{1+(i-2)s_j}^{j}(\gamma) &V_{2+(i-2)s_j}^{j}(\gamma) &\cdots &V_{s_{j}+(i-2)s_j}^{j}(\gamma)
	\end{pmatrix} \widetilde{\mathfrak{B}^{(1)}_j}(\gamma) \mbox{ and }$$  $$\widetilde{\mathfrak{B}^{(i')}_j}(\gamma)=\begin{pmatrix}
		V_{1+(i'-2)s_j}^{j}(\gamma) &V_{2+(i'-2)s_j}^{j}(\gamma) &\cdots &V_{s_{j}+(i'-2)s_j}^{j}(\gamma)
	\end{pmatrix} \widetilde{\mathfrak{B}^{(1)}_j}(\gamma)$$
     using the periodicity of entries of $V^{j}(\gamma)$.
	Since $$\begin{pmatrix}
		V_{1+(i-2)s_j}^{j}(\gamma) &\cdots &V_{s_{j}+(i-2)s_j}^{j}(\gamma)
	\end{pmatrix}^{\ast}\begin{pmatrix}
		V_{1+(i'-2)s_j}^{j}(\gamma)  &\cdots &V_{s_{j}+(i'-2)s_j}^{j}(\gamma)
	\end{pmatrix}=O_{s_j},$$ it follows that 
	\begin{align*}
		(\widetilde{\mathfrak{B}^{(i)}_j})^{\ast}(\gamma) \widetilde{\mathfrak{B}^{(i')}_j}(\gamma)=& \left(\begin{pmatrix}
			V_{1+(i-2)s_j}^{j}(\gamma) &V_{2+(i-2)s_j}^{j}(\gamma) &\cdots &V_{s_{j}+(i-2)s_j}^{j}(\gamma)
		\end{pmatrix} \widetilde{\mathfrak{B}^{(1)}_j}(\gamma)\right)^\ast\\
		&\times \begin{pmatrix}
			V_{1+(i'-2)s_j}^{j}(\gamma) &V_{2+(i'-2)s_j}^{j}(\gamma) &\cdots &V_{s_{j}+(i'-2)s_j}^{j}(\gamma)
		\end{pmatrix} \widetilde{\mathfrak{B}^{(1)}_j}(\gamma)\\		
		=&(\widetilde{\mathfrak{B}^{(1)}_j})^{\ast}(\gamma) \begin{pmatrix}
			V_{1+(i-2)s_j}^{j}(\gamma) &V_{2+(i-2)s_j}^{j}(\gamma) &\cdots &V_{s_{j}+(i-2)s_j}^{j}(\gamma)
		\end{pmatrix}^{\ast}\\
		&\times \begin{pmatrix}
			V_{1+(i'-2)s_j}^{j}(\gamma) &V_{2+(i'-2)s_j}^{j}(\gamma) &\cdots &V_{s_{j}+(i'-2)s_j}^{j}(\gamma)
		\end{pmatrix}\widetilde{\mathfrak{B}^{(1)}_j} (\gamma)\\
		=&O_{d_j},
	\end{align*}
	Hence, the result follows.
\ep

	\section{Applications to Sampling theory}\label{sec 5}

    Building on Theorem~\ref{thm_new_characterization_orthogonal_frame}, we apply the orthogonality theory of generalized translation-invariant systems to sampling on LCA groups. More precisely, we extend Weber's sampling results for unions of lattices in $\mathbb{R}^d$ \cite{W} to sampling sets arising from co-compact subgroups of an arbitrary LCA group. By realizing sampling transforms as analysis operators associated with GTI systems, sampling properties are translated into frame-theoretic properties of these systems. This correspondence enables us to characterize orthogonality of sampling transforms through the orthogonality criteria for GTI frames obtained earlier. The resulting characterization recovers Weber's theorem in the Euclidean setting and provides a unified framework for orthogonal sampling in a significantly broader class of translation-invariant systems. This framework allows us to identify boundedness, sampling, and tight sampling properties with the corresponding Bessel, frame, and tight frame properties of the associated GTI systems.  This illustrates how orthogonal frame theory provides a unifying and powerful perspective for sampling in harmonic analysis and signal processing.	
	
	We begin with the \textit{translation-invariant subspace} $V_{E} := \{ f \in L^2(G) : \operatorname{supp}(\widehat{f}) \subset E \},$
	where $E \subset \widehat{G}$ is a band, i.e., a measurable subset of finite Haar measure. 
	It is immediate that $T_\lambda V_E = V_E$ for all $\lambda \in G$.
	Since $V_E$ is a reproducing kernel Hilbert space, every element $f \in V_E$ satisfies
	\begin{equation}\label{eq_sample_intro}
		f(\lambda) = \langle f, T_\lambda \psi \rangle, 
	\end{equation}
	where $\widehat{\psi}_E = \chi_E$, the indicator function of the set $E$.
	For $\eta_j \in \operatorname{Aut}(G)$, $j \in \J$, 
	we define the \emph{sampling transform} associated with the union of  samples $\x:=\dx$  by
	\begin{align*}
		\tx: V_E \to \bigoplus\limits_{j \in \J} L^2(\Gamma_j): f \mapsto \left(f(\eta_j \la)\right)_{j \in \mathcal{J}},
	\end{align*}
	provided $\tx$ is bounded.
	The operator $\tx$ is \textit{bounded} if there exists $B>0$ such that
	\begin{equation}\label{eq_theta_bounded}
		\sum_{j \in \J} \int_{\Gamma_j} 
		\big| \langle f, T_{\eta_j\la} \psi_{E} \rangle \big|^2 \, 
		d\mu_{\Gamma}(\la) 
		\;\leq\; B \|f\|^2, 
		\qquad f \in V_E.
	\end{equation}
	In particular, this holds if and only if the system 
	$\bigcup_{j \in \mathcal{J}} \{T_\la \psi_E \}_{ \la \in \eta_j\Gamma}$
	is a Bessel sequence for $V_E$. We say that the samples  $\dx$ form a \textit{set of sampling} for the band $E$ if $\tx$ 
	is bounded above and below, equivalently, if 
	$\bigcup_{j \in \mathcal{J}} \{T_\la \psi_E \}_{ \la \in \eta_j\Gamma}$ is a frame for $V_E$. 	The set of sampling  is called \emph{tight} with constant $K$ if $\|\tx f\|^2 = K \|f\|^2,\ f \in V_E.$ \textit{ We say that the sampling set satisfies the $1$-UCP if the system $\bigcup_{j \in \mathcal{J}} \{T_\la \psi_E \}_{\la \in \eta_j \Gamma}$ satisfies the $1$-UCP.}

	Throughout the remainder of this section we fix $\eta_{j}, \zeta_{j} \in \operatorname{Aut}(G)$ for each $j \in \J$. Suppose that the sampling sets \( \x=\dx\) and \( \y=\dy\) are given, with associated sampling transforms \(\tx\) and \(\ty\) that are bounded operators on bands \(E\) and \(F\), respectively. We say that the samples \(\dx\) and \(\dy\) are \textit{orthogonal} on bands \(E\) and \(F\) if and only if the image of \(\ve\) under \(\tx\) is orthogonal to the image of \(V_F\) under \(\ty\) in \(\ops\),
	which is equivalent to $\ty^* \tx = 0,$ that is,
	\begin{align*}
		\sum_{j \in \J} \int_{\Gamma_j} 
		\langle f, T_{\eta_j\la} \psi_{E} \rangle  T_{\zeta_j\la} \psi_{F} \, 
		d\mu_{\Gamma}(\la)=0 
		\qquad f \in V_E.
	\end{align*}
	This section studies when the sampling transforms 
$\tx$ and $\ty$
 are pairwise orthogonal, and when they form tight frames.

    The following result gives necessary and sufficient conditions under which two sampling sets are orthogonal; it follows as an application of Theorem~\ref{thm_new_characterization_orthogonal_frame}. 

    \bpr
\label{prop_ness_suff_conditions_for_pos_UCP}
		Let  \( \{\eta\la:\la \in \Gamma_j\}_{j \in \J}\) and  \(\{\zeta \la:\la \in \Gamma_j\}_{j \in \J}\) be sampling sets such that at least one of the sampling set satisfy the $1$-UCP. The following assertions are equivalent:
		\begin{itemize}
			\item[(i)]  The samples  \( \{\eta\la:\la \in \Gamma_j\}_{j \in \J}\) and  \(\{\zeta \la:\la \in \Gamma_j\}_{j \in \J}\)  are orthogonal.
			\item[(ii)]  For every  $\alpha \in \union$, the set 
$\mu_{\widehat{G}}\left((\eta^*)^{-1}E \cap \big((\zeta^*)^{-1}F+\alpha\big)\right)=0$.
		\end{itemize}
    \epr
 \bp
First, note that the samples \( \{\eta\la:\la \in \Gamma_j\}_{j \in \J}\) and  \(\{\zeta \la:\la \in \Gamma_j\}_{j \in \J}\) are orthogonal if and only if the GTI systems $\bigcup\limits_{j \in \J}\{T_{\la} \psi_{E}\}_{\la \in \eta\Gj}$ and $\bigcup\limits_{j \in \J}\{T_{\la} \psi_{F}\}_{\la \in \z\Gj}$ are pairwise orthogonal, where $\widehat{\psi}_E = \chi_E$ and $\widehat{\psi}_F = \chi_F$. By Theorem~\ref{thm_new_characterization_orthogonal_frame}, this is further equivalent to requiring that, for each $\al \in \umzero$,
\begin{align*}
\displaystyle\sum_{j \in \mathcal{J} : \alpha \in \Gamma^\perp_j} \frac{1}{s(\Gamma_{j})}
\overline{\chi_E(\eta^\ast\gamma)} \chi_{F}(\zeta^\ast(\gamma+\al)) = 0
\mbox{ for a.e. } \gamma \in \widehat{G},
\end{align*}
and
\begin{equation*}
\displaystyle\sum_{j \in \mathcal{J} }\frac{1}{s(\Gamma_{j})}
\overline{\chi_E(\eta^\ast\gamma)} \chi_{F}(\zeta^\ast\gamma) = 0
\mbox{ for a.e. } \gamma \in \widehat{G}.
\end{equation*}
A direct calculation shows that this is further equivalent to requiring that for each $\alpha \in \union$, the set 
$\mu_{\widehat{G}}\left((\eta^*)^{-1}E \cap \big((\zeta^*)^{-1}F+\alpha\big)\right)=0$. This completes the proof.
\ep
The following result shows that two unions of samples are orthogonal if and only if each corresponding pair of individual samples is orthogonal. This result generalizes \cite[Theorem~1]{W2004} in two respects: first, by extending the underlying space from $\mathbb{R}^d$ to a locally compact abelian (LCA) group, and second, by replacing the lattice $\mathbb{Z}^d$ with co-compact subgroups.

    \bt\label{thm_pairwise_orthogonal_samples} 
		Suppose that the sampling sets \( \dx\) and \( \dy\) satisfy the dual $1$-UCP.  These samples are orthogonal on the bands $E$ and $F$  if and only if for every $j \in \mathcal{J}$, for all $\alpha \in \Gamma_j$ we have $\mu_{\widehat{G}}\left((\eta^*_j)^{-1} E \cap \left( (\zeta^*_j)^{-1} F + \alpha \right) \right)=0.$
		Equivalently, the samples above are orthogonal if and only if the samples $\{\eta_j \la \}_{\la \in \Gamma_j} \mbox{ and } \{\zeta_j \la \}_{ \la \in \Gamma_j} $ are orthogonal on the bands $E$ and $F$ for each $j \in \J$.
    \et
  Although Theorem~\ref{thm_pairwise_orthogonal_samples} generalizes Proposition~5.1, its proof does not follow directly from Section~3 due to the variability of the maps $\eta_j$ and $\z_j$. The following lemma, based on bracket-map techniques, will be instrumental in the proof.
 To this end, we define the bracket function  
	$$[f,g](x,\Gamma)=\int\limits_\Gamma f(x+\la) \overline{g(x+\la)} d\mu_{\Gamma}(\la).$$
    \bl\label{lem_for_orthogonal_sample_proof}
	Let the  systems $\displaystyle\bigcup_{j \in \mathcal{J}}\{T_{\lambda}g_j^{(1)}\}_{\lambda \in \eta_j\Gamma}$ and $\displaystyle\bigcup_{j \in \mathcal{J}}\{T_{\lambda}g_j^{(2)}\}_{\lambda \in \zeta_j\Gamma}$ be Bessel systems, satisfying the dual $1$-UCP. Define $\Theta_{g^{(1)}, g^{(2)}}$ analogous to mixed dual gramian operator:
	$$\Theta_{g^{(1)}, g^{(2)}}:L^2(G) \to L^2(G): \ f \mapsto \sum_{j \in \mathcal{J}} \int\limits_{\Gamma} \langle f, T_{\eta_j \la} g_j^{(1)} \rangle T_{\zeta_j \la} g_j^{(2)} d \mu_{\Gamma}(\la).$$
	Then for all $f,g \in \mathcal{D}_B$, we have 
	$$\langle \Theta_{g^{(1)}, g^{(2)}}f, g \rangle=\sum_{j \in \mathcal{J}} \frac{1}{\Delta(\eta_j)\Delta(\zeta_j)}  \int\limits_{\widehat{G}/\G^\perp} \left[\widehat{f}, \widehat{g_j^{(1)}}\right] \left(\eta_j^\ast w, \eta_j\G^\perp\right) \left[ \widehat{g_j^{(2)}}, \widehat{g}\right] \left(\zeta_j^\ast w, \zeta_{j}\G^\perp\right) \ d \mu_{\widehat G /  \Gamma^\perp}(\dot{w}).$$
	\el
    \bp 
	We compute $\langle \Theta_{g^{(1)}, g^{(2)}} f, g \rangle$ explicitly in the Fourier domain. 
	Since both systems are Bessel and satisfy the dual-1 UCP, the operator $\Theta_{g^{(1)}, g^{(2)}}$ is well defined, and all sums and integrals below converge absolutely.
	 By definition,
	\bee
	\langle \Theta_{g^{(1)}, g^{(2)}}f,g \rangle
	&=&\sum_{j \in \mathcal{J}} \int\limits_{\Gamma} 
	\langle f, T_{\eta_j \la} g_j^{(1)} \rangle 
	\langle T_{\zeta_j \la} g_j^{(2)}, g \rangle 
	\, d\mu_{\Gamma}(\la) \nonumber\\
	&=&\sum_{j \in \mathcal{J}} \int\limits_{\Gamma} 
	\langle \widehat{f}, M_{-\eta_j \la} \widehat{g_j^{(1)}} \rangle \,
	\overline{\langle \widehat{g}, M_{-\zeta_j \la} \widehat{g_j^{(2)}} \rangle}\,
	d \mu_{\Gamma}(\la). \label{4Ceq:Omega-inner}
	\ene
	\medskip\noindent
	Consider the first factor in \eqref{4Ceq:Omega-inner}. By the substitution $\xi=(\eta_j')^{-1}w:=\eta_j^\ast w$ we obtain
	\[
	I_1 := \int\limits_{\widehat G} \widehat f(\xi)\,\overline{(\xi,\eta_j\la)}\,\overline{\widehat{g_j^{(1)}}(\xi)} \, d\mu_{\widehat G}(\xi)
	= \frac{1}{\Delta(\eta_j)} \int\limits_{\widehat G} \widehat f(\eta_j^* w)\,
	\overline{\widehat {g_j^{(1)}}(\eta_j^*w)}\,\overline{(w,\la)} \, d\mu_{\widehat G}(w).
	\]
	Applying Weil’s formula \eqref{eq_weil_integral_formula_2} gives
	\[
	I_1 = \frac{1}{\Delta(\eta_j)} 
	\int\limits_{\widehat G / \Gamma^\perp}
	\sum_{\al \in \Gamma^\perp} 
	\widehat f(\eta_j^*(w+\al))\,
	\overline{\widehat {g_j^{(1)}}(\eta_j^*(w+\al))}\,
	\overline{(w,\la)} \,
	d\mu_{\widehat G / \Gamma^\perp}(\dot{w}).
	\]
	Since $(\al,\la)=1$ for all $\al \in \Gamma^\perp$, the factor $(w+\al,\la)$ reduces to $(w,\la)$, where $\dot{w}$ denotes the coset in $\widehat G/\Gamma^\perp$. Hence
	\[
	I_1 = \frac{1}{\Delta(\eta_j)} 
	\int\limits_{\widehat G / \Gamma^\perp}
	[\widehat f,\widehat {g_j^{(1)}}](\eta_j^* w, \eta_j^* \Gamma^\perp)\,
	\overline{(w,\la)} \,
	d\mu_{\widehat G / \Gamma^\perp}(\dot{w}).
	\]
	A similar computation with $\xi=(\zeta_j^*)^{-1}w$ yields
	\[
	I_2 := \int\limits_{\widehat G} \widehat g(\xi)\,\overline{(\xi,\zeta_i \gamma)}\,\overline{\widehat {g_j^{(2)}}(\xi)} \, d\mu_{\widehat G}(\xi)
	= \frac{1}{\Delta(\zeta_j)} 
	\int\limits_{\widehat G / \Gamma^\perp}
	[\widehat g,\widehat {g_j^{(2)}}](\zeta_j^* w, \zeta_j^*\Gamma^\perp)\,
	\overline{(w,\gamma)} \,
	d\mu_{\widehat G / \Gamma^\perp}(\dot{w}).
	\]
	\medskip\noindent
	Substituting $I_1$ and $I_2$ into \eqref{4Ceq:Omega-inner} gives
	\bes
	\langle \Theta_{g^{(1)}, g^{(2)}}f,g \rangle
	= \sum_{j \in \mathcal{J}} \frac{1}{\Delta(\eta_j)\Delta(\zeta_j)}
	\int\limits_{\Gamma}
	\Bigg(
	\int\limits_{\widehat G / \Gamma^\perp}
	[\widehat f,\widehat {g_j^{(1)}}](\eta_j^*w,\eta_j^* \Gamma^\perp)\,
	\overline{(w,\gamma)} \,
	d\mu_{\widehat G / \Gamma^\perp}(\dot{w})
	\Bigg) \nonumber\\
	\qquad\qquad\qquad\times
	\overline{
		\Bigg(
		\int\limits_{\widehat G / \Gamma^\perp}
		[\widehat g,\widehat {g_j^{(2)}}](\zeta_j^* w,\zeta_j^*\Gamma^\perp)\,
		\overline{(w,\gamma)} \,
		d\mu_{\widehat G / \Gamma^\perp}(\dot{w})
		\Bigg)} \,
	d\mu_\Gamma(\la).
	\ens
	Define, for each $j\in\J$, the functions on the quotient $\widehat G/\Gamma^\perp$
	\[
	A_j(\dot{w}):=\big[\widehat f,\widehat g_j^{(1)}\big](\eta_j^* w,\eta_j^*\Gamma^\perp) \quad \mbox{ and } \quad
	B_j(\dot\xi):=\big[\widehat g,\widehat g_j^{(2)}\big](\zeta_j^* w, \zeta_j^*\Gamma^\perp).
	\]
	For each fixed $j$, the inner integrals in the displayed formula above can be written as
	\[
	F_j(\gamma)=\int\limits_{\widehat G/\Gamma^\perp} A_j(\dot\xi)\,\overline{(\xi,\gamma)}\,d\mu_{\widehat G/\Gamma^\perp}(\dot\xi)
	\quad \mbox{ and } \quad
	G_j(\gamma)=\int\limits_{\widehat G/\Gamma^\perp} B_j(\dot{w})\,\overline{(\xi,\gamma)}\,d\mu_{\widehat G/\Gamma^\perp}(\dot\xi),
	\]
	which are precisely the inverse Fourier transforms of $A_j$ and $B_j$ when we identify
	$\widehat\Gamma\cong\widehat G/\Gamma^\perp$. Hence $F_j,G_j\in L^2(\Gamma)$ and by Plancherel on~$\Gamma$ (or equivalently on~$\widehat\Gamma$), we have
	\[
	\int\limits_\Gamma F_j(\la)\,\overline{G_j(\la)}\,d\mu_\Gamma(\la)
	= \int\limits_{\widehat G/\Gamma^\perp} A_j(\dot{w})\,\overline{B_j(\dot{w})}\,d\mu_{\widehat G/\Gamma^\perp}(\dot{w}).
	\]
	Applying this identity to the earlier double integral collapses the integration over $\Gamma$ and yields
	\[
	\langle \Theta_{g^{(1)}, g^{(2)}}f,g \rangle
	= \sum_{j\in\J}\frac{1}{\Delta(\eta_j)\Delta(\zeta_j)}
	\int\limits_{\widehat G/\Gamma^\perp}
	\big[\widehat f,\widehat g_j^{(1)}\big](\eta_j^*w,\eta_j^*\Gamma^\perp)\,
	\overline{\big[\widehat g,\widehat g_j^{(2)}\big](\zeta_j^*w, \zeta_j^*\Gamma^\perp)}\,
	d\mu_{\widehat G/\Gamma^\perp}(\dot{w}),
	\]
	which is the desired formula.
	\ep

    \noindent{\bf Proof of Theorem~\ref{thm_pairwise_orthogonal_samples}.}
	The samples $\dx$ and $ \dy$  are bounded on the band $E$ and $F$.  Thus, the corresponding GTI systems $\uniontsi$ and $\uniontsbi$ are Bessel on subspaces $V_E$ and $V_F$, respectively.   Suppose the samples $\dx$ and $ \dy$ are orthogonal, equivalently the $\uniontsi$ and $\uniontsbi$ are pairwise orthogonal. Since the systems $\uniontsi$ and $\uniontsbi$ satisfy dual $1$-UCP, this is further equivalent to 
	\begin{align*}
		\langle \Theta f,g \rangle=0 \mbox{ for all } f \in \mathcal{D}_\mathcal{B}\cap V_E \mbox{ and } g \in \mathcal{D}_\mathcal{B}\cap V_F.
	\end{align*}
	By Lemma \ref{lem_for_orthogonal_sample_proof}, which is further equivalent to 
	\begin{equation}\label{eq_OS1}
		\sum_{j \in \J}\frac{1}{\Delta(\eta_j)\Delta(\zeta_j)}
		\int_{\widehat G/\Gamma_j^\perp}
		\big[\widehat f,\widehat g_j^{(1)}\big](\eta_j^*w,\eta_j^*\Gamma_j^\perp)\,
		\overline{\big[\widehat g,\widehat g_j^{(2)}\big](\zeta_j^*w,\zeta_j^*\Gamma_j^\perp)}\,
		d\mu_{\widehat G/\Gamma_j^\perp}(\dot w)=0.
	\end{equation}
	Define  $g_0 \in V_E$ and $f_0 \in V_F$ by $\widehat{g}_0=\chi_{E_{0}}$  and $\widehat{f}_0=\chi_{F_{0}}$.  
	Replacing $f$ and $g$ by $f_0$ and $g_0$ in (\ref{eq_OS1}) and $\widehat{g_j^{(2)}}=\chi_{F}$, $\widehat{g_j^{(1)}}=\chi_E$, we get
	\begin{equation}
		\sum_{j \in \J}\frac{1}{\Delta(\eta_j)\Delta(\zeta_j)}
		\int_{\widehat G/\Gamma_j^\perp}
		\big[\chi_{E_0},\chi_{E}\big](\eta_j^*w,\eta_{j}^\ast\Gamma_j^\perp)\,
		\overline{\big[\chi_{F},\chi_{F_{0}}\big](\zeta_j^*w, \zeta_j^*\Gamma_j^\perp)}\,
		d\mu_{\widehat G/\Gamma_j^\perp}(\dot w)=0.
	\end{equation}
	Therefore for each $j \in \J$, we have 
	\begin{align*}
		&	\int_{\widehat G/\Gamma_j^\perp}
		\big[\chi_{E_0},\chi_{E}\big](\eta_j^*w,\eta_{j}^\ast\Gamma_j^\perp)\,
		\overline{\big[\chi_{F},\chi_{F_{0}}\big](\zeta_j^*w,\zeta_{j}^\ast\Gamma_j^\perp)}\,
		d\mu_{\widehat G/\Gamma_j^\perp}(\dot w)=0\\
		\implies&\displaystyle\sum\limits_{ \al \in \Gamma_j^\perp} \chi_{E_0}(\eta_j^\ast (w+\al) ) \displaystyle\sum\limits_{ \beta \in \Gamma_j^\perp} \chi_{F_0}(\zeta_j^\ast (w+\beta))  =0.
	\end{align*}
	Since $E_0 \subset E$ and $F_{0} \subset F$ are arbitrary measurable sets, the above formula holds for $E$ and $F$, i.e. 
	\bee
	\displaystyle\sum\limits_{ \al \in \Gamma_j^\perp} \chi_{E}(\eta_j^\ast (w+\al) ) \displaystyle\sum\limits_{ \beta \in \Gamma_j^\perp} \chi_{F}(\zeta_j^\ast(w+\beta) ) =0 \mbox{ for a.e. } w \in \widehat{G}. \label{eq_jth_orthogonal_sample}
	\ene
	 This is further equivalent to saying that for every $j \in \mathcal{J}$, for all $\alpha \in \Gamma_j$ we have $$\mu_{\widehat{G}}\left((\eta^*_j)^{-1} E \cap \left( (\zeta^*_j)^{-1} F + \alpha \right) \right)=0.$$ Converse follows by reversing the steps.
	\ep
	The following result shows that a union of samples is tight if and only if each individual sample is tight.
	\bt\label{thm_tight_sample}
	The samples $\x=\{\eta_j \la: \la \in \G\}_{1 \leq j \leq n}$ are tight on band $E$ with constant $K$ if and only if, for each $j \in \{1,2,\cdots,n\}$, the samples $=\{\eta_j \la : \la \in \Gamma\}$ are tight on $E$ with constant $K_j$. In this case, we have $K_j=\frac{1}{s(\eta_{j} \Gamma)}$ and $K=\displaystyle\sum\limits_{j=1}^n K_j$. 
	\et
Before proving this theorem, we first present a corollary of Theorem~\ref{thm_char_Parseval_frames}.
	\bc\label{cor_tight_frame}
	Suppose $\{g_j\}_{j \in \mathcal{J}} \subset V_{E}$ and $\Theta_g$ be the analysis operator of the system $\displaystyle\cup_{j \in \mathcal{J}}\{T_{\lambda}g_j\}_{\lambda \in \eta_j\Gamma_j}$ satisfying the $1$-UCP. Then $\|\theta_g f\|^2=K\|f\|^2$ if and only if for all $\alpha \in \displaystyle\cup_{j \in \mathcal{J}} \eta_j^\ast \Gamma^\perp_j$, we have $$\displaystyle\sum_{j \in {\mathcal{J}} : \alpha \in \eta_j^\ast \Gamma^\perp_j} \frac{1}{s(\eta_j\Gamma_j)}   \overline{\widehat{g_{j}^{(1)}}(\gamma)} \widehat{g_{j}^{(1)}}(\gamma+\alpha) =K\delta_{\alpha,0}\chi_{E}(\gamma)  \mbox{ for a.e. } \gamma \in \widehat{G}.$$
	\ec
	\bp
	In Theorem \ref{thm_char_Parseval_frames}, we assume $P_j=\{j\}$ and $\Gamma_j=\alpha_{j}\Gamma_j$ for each $j \in \mathcal{J}$.  Also 
	supp($\widehat{g_j})\subset  E$, thus now the result directly follows from Theorem \ref{thm_char_Parseval_frames}.
	\ep
	Now we are ready to prove our result.
	
	\noindent{\bf Proof of Theorem \ref{thm_tight_sample}\ :\ } 
     Let $\mathcal{J}=\{1,2,\cdots,n\}$. Observe that the $1$-UCP automatically holds for the system $\uniontsi$. Suppose first that the samples are tight, that is,
	\[
	\|\tx f\|^2 = K\|f\|^2
	\]  
  By Corollary~\ref{cor_tight_frame}, this holds if and only if,  for each $\alpha \in \bigcup_{j \in \mathcal{J}} \eta_j^\ast \Gamma^\perp$,  
	\bee \label{eq_union_Tight_frame}
	K\delta_{\alpha,0}\chi_{E}(\g) 
	&=& \sum_{j \in \mathcal{J}: \,\alpha \in \eta_j^\ast \Gamma^\perp} \tfrac{1}{s(\eta_j\Gamma)} \overline{\widehat{g_{j}^{(1)}}(\gamma)} \widehat{g_{j}^{(1)}}(\gamma+\alpha) \quad \text{a.e. } \gamma \in \widehat{G} \nonumber\\
	&=& \sum_{j \in \mathcal{J}: \,\alpha \in \eta_j^\ast \Gamma^\perp} \tfrac{1}{s(\eta_j\Gamma)} \overline{\widehat{\psi_{E}}(\gamma)} \widehat{\psi_{E}}(\gamma+\alpha) \quad \text{a.e. } \gamma \in \widehat{G} \nonumber\\
	&=& \sum_{j \in \mathcal{J}: \,\alpha \in \eta_j^\ast \Gamma^\perp} \tfrac{1}{s(\eta_j\Gamma)} \chi_{E}(\gamma)\chi_{E}(\gamma+\alpha) \quad \text{a.e. } \gamma \in \widehat{G}.
	\ene  
	Thus, \eqref{eq_union_Tight_frame} holds precisely when, for each $j \in \{1,2,\dots,n\}$ and all $\alpha \in \eta_j^\ast \Gamma^\perp$,  
	\begin{equation}
		K_j \delta_{\alpha,0}\chi_{E}(\g) = \tfrac{1}{s(\alpha_j\Gamma)} \chi_{E}(\gamma)\chi_{E}(\gamma+\alpha) \quad \text{a.e. } \gamma \in \widehat{G},
	\end{equation}  
	where $K_j=\tfrac{1}{s(\eta_j\Gamma)}$. Equivalently, for each $j \in \{1,2,\dots,n\}$, the samples $\{\eta_j \gamma\}_{\gamma \in \Gamma}$ form a tight frame on $E$ with constant $K_j$.  This completes the proof.
	\ep

        	\section*{Funding}
		The research of N. Redhu was supported by a research grant from CSIR, New Delhi [09/1022(0099)/2020-EMR-I]. N. K. Shukla's research was supported by the DST-SERB Project [MTR/2022/000176].	A. Gumber was supported by the Austrian Science Fund (FWF)[10.55776/ESP649]

					{\bf \vspace{.1in} \noindent Navneet Redhu\\
							Department of Mathematics\\
							Indian Institute of Technology Indore\\
							Simrol, Khandwa Road,
							Indore-453 552, Madhya Pradesh, India \\
							Email: navneetredhu.iiti@gmail.com}

						{\bf \vspace{.1in} \noindent Anupam Gumber\\
							Faculty of Mathematics\\
							 University of Vienna\\
						 Oskar-Morgenstern-Platz 1, 1090, Vienna, Austria\\
							 Email: anupam.gumber@univie.ac.at
                             
						 {\bf \vspace{.1in} \noindent Hartmut F\"uhr\\
							Lehrstuhl f\"ur Geometrie und Analysis\\
							 RWTH Aachen University\\
						 Room 50, Kreuzherrenstrasse 2, Aachen, Germany\\
							 Email: fuehr@mathga.rwth-aachen.de
                             
						{\bf \vspace{.1in} \noindent Niraj K. Shukla\\
								Department of Mathematics\\
								Indian Institute of Technology Indore\\
								Simrol, Khandwa Road,
								Indore-453 552, Madhya Pradesh, India \\
								Email: nirajshukla@iiti.ac.in} 
				
				\end{document}